\newtheorem{lem}{Lemma}[section]
\newtheorem{theo}[lem]{Theorem}
\newtheorem{coro}[lem]{Corollary}
\newtheorem{propo}[lem]{Proposition}
\newtheorem{rema}[lem]{Remark}
\newtheorem{defi}[lem]{Definition}
\newenvironment{lemma}
{\begin{lem}\sl } {\end{lem}}
\newenvironment{remark}
{\begin{rema}\rm } {\end{rema}}
\newenvironment{proof}{\paragraph*{Proof}}
{\par}
\newcommand\GL{{\mathrm{GL}}}
\newcommand\SL{{\mathrm{SL}}}
\newcommand\eps\varepsilon
\newcommand\ph\varphi
\newcommand\C{{\mathbb C}}
\newcommand\F{{\mathbb F}}
\newcommand\Q{{\mathbb Q}}
\newcommand\A{{\mathbb A}}
\newcommand\PPP{{\mathbb P}}
\newcommand\T{{\mathbb T}}
\newcommand\Z{{\mathbb Z}}
\newcommand\N{{\mathbb N}}
\newcommand\R{{\mathbb R}}
\newcommand\height{{\mathrm h}}
\newcommand\NS{{\mathrm {NS}\,}}
\newcommand\HH{{\mathcal H}}
\title{Heights on squares of modular curves}
\author{Pierre Parent \\
\ \\
with an appendix by Pascal Autissier}
\begin{document}

\maketitle

\begin{abstract}
We develop a strategy for bounding from above the height of rational points of modular curves with values in 
number fields, by functions which are polynomial in the curve's level. Our main technical tools come from effective 
Arakelov descriptions of modular curves and jacobians. We then fulfill this program in the following 
particular case:

  If~$p$ is a not-too-small prime number, let~$X_0 (p )$ be the classical modular curve of level $p$ over 
$\Q$. Assume Brumer's conjecture on the dimension of winding quotients of $J_0 (p)$. We prove that there is 
a function $b(p)=O(p^{5} \log p )$ (depending only on $p$) such that, for any quadratic number field $K$, 
the $j$-height of points in $X_0 (p ) (K)$ which are not lifts 
of elements of $X_0^+ (p) (\Q )$, is less or equal to~$b(p)$. 
 
\medskip

AMS 2000 Mathematics Subject Classification  11G18 (primary), 14G40, 14G05 (secondary). 
\end{abstract}

\tableofcontents

\section{Introduction}

Let $N$ be an integer, $\Gamma_N$ a level-$N$ congruence subgroup of $\GL_2 (\Z )$, and $X_{\Gamma_N}$ the 
associated modular curve over some subfield of $\Q (\mu_N )$ which, to simplify the discussion, we assume from now on to be $\Q$. 
The genus $g_N$ of $X_{\Gamma_N}$ grows roughly as a polynomial  function of $N$. So if $N$ 
is not too small, $X_{\Gamma_N}$ has only a finite number of rational points with values in any 
given number field, by Mordell-Faltings. If one is interested in explicitly determining the set of rational 
points however, finiteness is of course not sufficient; a much more desirable control would be 
provided by upper bounds, for some handy height, on those points. Proving such an ``effective Mordell'' is 
known to be an extremely hard problem for arbitrary algebraic curves on number fields.

In the case of modular curves, however, the situation is much better. Indeed, whereas the jacobian of a 
random algebraic curve should be a somewhat equally random simple abelian variety, it is well-known that the 
jacobian $J_{\Gamma_N}$ of $X_{\Gamma_N}$ decomposes up to isogeny into a product of quotient abelian
varieties defined by Galois orbits of newforms for $\Gamma_N$. Moreover, in many cases, a nontrivial part 
of those factors happen to have rank zero over $\Q$. Our rustic starting observation is therefore the 
following: if $J_{\Gamma_N ,e}$ is the ``winding quotient'' of $J_{\Gamma_N}$, that 
is the largest quotient $J_{\Gamma_N ,e}$  with trivial $\Q$-rank, and
$$ 
X_{\Gamma_N}  \stackrel{\iota}{\hookrightarrow} J_{\Gamma_N} \stackrel{\pi_e}{\twoheadrightarrow} 
J_{\Gamma_N ,e}
$$
is some  Albanese map from the curve to its jacobian followed by the projection to $J_{\Gamma_N ,e}$, then 
any rational point on $X_{\Gamma_N}$ has an image which is a torsion point (because rational) on 
$J_{\Gamma_N ,e}$, hence has $0$ normalized height. The pull-back of some invertible sheaf defining the 
(say) theta height on $J_{\Gamma_N ,e}$ therefore defines a height on $X_{\Gamma_N}$ which is trivial on 
rational points. That height in turn necessarily compares to any other natural one, for instance the modular 
$j$-height. Therefore the $j$-height of any rational point on $X_{\Gamma_N}$ is also zero ``up to error 
terms". Making those error terms explicit would give us the desired upper bound for the height of rational points on $X_{\Gamma_N}$. 

   That approach can in principle be generalized to degree-$d$ number fields, by considering rational points on symmetric powers $X_{\Gamma_N}^{(d)}$ of $X_{\Gamma_N}$ 
(at least if $\dim J_{\Gamma_N ,e} \geq d$). To be a little bit more precise in the present case of symmetric squares, let us associate to a quadratic 
point $P$ in $X_0 (p)$ the $\Q$-point 
$Q :=(P,{}^\sigma P)$ of $X_0 (p)^{(2)}$. Its image $\iota (Q)$ via some appropriate Albanese embedding in $J_0 (p)$ lies above a torsion point $a$ in $J_e$: assume for simplicity $a=0$. 
We therefore know $\iota (Q)$ belongs to the intersection of $\iota (X_0 (p)^{(2)} )$ with the kernel $\tilde{J}_e^\perp$ of the projection 
$$
\pi_e \colon J_0 (p)\twoheadrightarrow J_e .
$$
To improve the situation we can further remark that $\iota (Q)$ actually lies at the intersection of $\iota (X_0 (p)^{(2)} )$ with the ``projection'', in some appropriate sense, of the latter surface
on $\tilde{J}_e^\perp$. Then one can show that this intersection is $0$-dimensional (but here we need to assume Brumer's conjecture, see below) so that its theta height is 
controlled, via some arithmetic B\'ezout theorem, in terms of the degree and height of the two surfaces we intersect. Using an appropriate version of Mumford's repulsion principle one
derives a bound for the height of $\iota (P)$ too (and not only for its sum $\iota (Q)$ with its Galois conjugate).  Then one makes the translation again from theta height to $j$-height on $X_0 (p)$.

   Nontrivial technical work is of course necessary to give sense to the straightforward strategy sketched above. The aim of this article is thus to show the possibility of that  
approach,  by making it work in what we feel to be the simplest non-trivial case: that of quadratic points of the classical modular curve $X_0 (p)$ as above (or $X_0 (p^2 )$, for technical 
reasons), for $p$ a prime number\footnote{Larson and Vaintrob have proven, under the Generalized Riemann Hypothesis, the asymptotic 
triviality of rational points on $X_0 (p)$ with values in any given number field which does not contain the Hilbert class field of some quadratic imaginary   field (see \cite{LV14},
Corollary~6.5). Independently of any conjecture, Momose had already proven the same result in the case where $K$ is a given quadratic number field (\cite{Mo95}). 
Our method however provides bounds which do not depend on the field, and should generalize to some other congruence 
subgroups.}. In the course of the proof we are led to assume the already mentioned conjecture of Brumer, which asserts that the winding quotient of $J_0 (p):= J_{\Gamma_0 (p)}$ has dimension 
roughly half that of $J_0 (p)$. 
That hypothesis is actually used in only one, technical, but crucial place, where we prove that a morphism between two curves is a generic isomorphism (see last point of 
Lemma~\ref{afterBrumer}). Note that a lower bound of $1/4$ (instead of the desired $1/2$) for the asymptotic ratio $\dim J_e /\dim J_0 (p)$ has been proven by Iwaniec-Sarnak and 
Kowalski-Michel-VanderKam. (Actually, $(1/3 +\varepsilon )$ would be sufficient for us, see~Lemma~\ref{afterBrumer} and the proof of Theorem~\ref{theoheight} below.) 
In any case we cannot at the moment get rid of this assumption - note it can in principle be numerically checked in all specific cases.
In this setting, our main result is the following (see Theorem~\ref{theoheight}).

\begin{theo}   
\label{mainofintro}
For $w_p$ the Fricke involution, set $X_0^+ (p)  =X_0 (p)/w_p $.
Assume Brumer's conjecture (see Section~\ref{preliminaries}, (\ref{Brumer})\footnote{The weak version of that conjecture 
we actually need  is stated in~(\ref{TuM'asDitDeLeFaireMoinsFort}).}). Then the quadratic 
points of $X_0 (p )$, which are not lifts of elements of $X_0^+ (p) (\Q )$, have $j$-height bounded from 
above by $O (p^{5}  \log p)$. 

  The same holds true for quadratic points of $X_0 (p^2 )$, without the restriction about $X_0^+ (p)$. 
\end{theo}

   Needless to say, this result cries for both sharpening and generalization.
Yet it should be possible to immediately use avatars of Theorem~\ref{mainofintro} to prove that
rational points are only cusps and CM points, for some specific modular curves of arithmetic interest. If combined with lower 
bounds for heights furnished by isogeny theorems as in~\cite{BPR13}, the above theorem already has 
consequences on rational points (see Corollary~\ref{corolaBrum}).  

  Regarding past works about rational points on modular curves, one can notice that most of them use, at least in parts, some variants of Mazur's method, which 
can  very roughly be divided into two steps:  first, map modular curves to winding quotients as described above; then prove some quite delicate properties about 
completions of that map to $J_e$ (formal immersion criteria). The second step is probably the most difficult to carry over to great generality. The method 
we here propose therefore allows one to use only the first and crucial fact - the mere existence of nontrivial winding quotients. In many cases, the existence 
of such quotients is known to be a deep result of Kolyvagin-Logachev-Kato, \`a la Birch-Swinnerton-Dyer Conjecture which, again, seems to reflect, from the arithmetic point of view, the quite 
special properties of the image locus (in the moduli space of principally polarized abelian varieties) of modular curves, among all algebraic curves, 
under Torelli's map.

      The methods used in this paper are mainly explicit Arakelov techniques for modular curves and abelian varieties. Such techniques and results
have been pioneered, as far as we know, by Abbes, Michel and Ullmo at the end of the 1990s (see in particular \cite{AU95}, \cite{MU98} and \cite{Ul00},
whose results we here eagerly use). They have subsequently been revisited and extended in the work developed by Edixhoven and his school, as 
mainly (but not exhaustively) presented in the orange book~\cite{CE11}. That work was motivated by algorithmic Galois-representations 
issues, but its tools are well suited to our rational points questions, as we wish to show here. We similarly hope that the effective Arakelov results 
about modular curves and jacobians we work out in the present article shall prove useful in other contexts\footnote{For recent investigations related to more general questions 
of effective bounds of algebraic points on curves, one can check~\cite{CVV16}.}.

\medskip

  The layout of this article is as follows. In Section~2 we start gathering classical instrumental facts on quotients of modular jacobians and 
regular models of $X_0 (p)$ over rings of algebraic integers. In Section~3 we make a precise description of the arithmetic Chow group of $X_0 (p)$.
Section~4 provides an explicit comparison theorem between $j$-heights and pull-back of normalized theta height on the jacobian. 
Section~5 computes the degree and Faltings height of the image of symmetric products within modular jacobians. 
In Section~6 we prove our arithmetic B\'ezout theorem (in the sense of~\cite{BGS94}) for 
cycles in $J_0 (p)$, relative to cubist metrics
(instead of the more usual Fubini-Study metrics). This seems more natural, and has the advantage of being quantitatively much more efficient;
that constitutes the technical heart of the present paper. Then we apply that 
arithmetic B\'ezout to our modular jacobian after technical computations on metric comparisons. 
Section~7 concludes the computations of the height bounds for quadratic rational points on $X_0 (p )$
by making various intersections, projections and manipulations for which to refer to loc. cit.

\bigskip
   
\noindent  {\bf Convention.}
In order to avoid numerical troubles, we safely assume in all what follows that primes are by definition strictly larger than $17$.

\section{Curves, jacobians, their quotients and subvarieties}
\label{preliminaries}

\subsection{Abelian varieties}

\subsubsection{Decompositions}

Let $K$ be a field, $J$ an abelian variety of dimension $g$ over $K$ and ${\cal L}$ an ample invertible sheaf defining a
polarization of $J$. Assume $J$ is $K$-isogenous to a product of two (nonzero) subvarieties, that is, there are abelian subvarieties 
\begin{eqnarray}
\label{desimmersions}
\iota_A \colon A \hookrightarrow J, \  \iota_B \colon B \hookrightarrow J
\end{eqnarray}
endowed with the polarization $\iota_A^* ({\cal L})$ and $\iota_B^* (B)$ respectively, such that~$\iota_A +\iota_B  \colon A\times B\to J$ 
is an isogeny. (Recall that by convention here, all abelian (sub)varieties are assumed to be connected.) Then $\pi_A \colon J\to A' := J\  {\mathrm{mod}} \ B$, 
and similarly $\pi_B \colon J\to B'$, are called {\it optimal quotients} of $J$.

To simplify things we also assume from now on that~${\mathrm{End}}_{\overline{K}} (A,B)=\{ 0\}. $
The product isogeny~$\pi :=\pi_A \times \pi_B \colon J\to A' \times B'$ induces isogenies 
$A\to A'$ and  $B\to B'$. We write
$$
\Phi \colon A\times B \to J \to A' \times B' 
$$
for the obvious composition. Taking for instance dual isogenies of $A\to A'$ and $B\to B'$, we also define an endomorphism
\begin{eqnarray}
\label{Psi}
\Psi \colon J \to A' \times B' \to A\times B \to J.
\end{eqnarray}

When $K=\C$, the above constructions are transparent. There is a $\Z$-lattice $\Lambda$ in $\C^g$, endowed with a symplectic pairing,
such that $J(\C )\simeq \C^g /\Lambda$ and one can find a direct sum decomposition $\C^g =\C^{g_A} \oplus \C^{g_B}$ such that
if $\Lambda_A =\Lambda \cap \C^{g_A}$ and $\Lambda_B =\Lambda \cap \C^{g_B}$, then
$$
A(\C ) \simeq \C^{g_A} /\Lambda_A {\ {\mathrm{and}}\ } B(\C ) \simeq \C^{g_B} /\Lambda_B .
$$
If $p_A \colon \C^g \to \C^{g_A}$ and $p_B \colon \C^g \to \C^{g_B}$ are the $\C$-linear projections 
relative to that decomposition, the analytic description of $\pi_{A,\C} \colon J(\C ) \to A' (\C )$ is then
$$
z {\ \mathrm{mod}\ } \Lambda \mapsto z {\ \mathrm{mod}\ } (\Lambda +\Lambda_B \otimes \R ) =p_A (z) {\ \mathrm{mod}\ } (p_A (\Lambda )).
$$

Summing-up we have lattice inclusions: $\Lambda_A \subseteq p_A (\Lambda )$,  $\Lambda_B \subseteq p_B (\Lambda )$, with finite indices, 
in~$\C^g$ such that our isogenies are induced by
$$
\Lambda_A \oplus \Lambda_B \subseteq \Lambda \subseteq p_A (\Lambda )\oplus p_B (\Lambda ).
$$

The isogeny ${I'_A \colon A\to A' }$ deduced from the inclusion~$\Lambda_A \subseteq p_A (\Lambda )$ has 
degree~${\mathrm{card}} (p_A (\Lambda )/\Lambda_A )$. If~$N_A$ is a multiple of the exponent of the quotient~$p_A (\Lambda )/\Lambda_A$,
there is an isogeny ${{I}_{A,N_A} \colon  A' \to A}$ such that~${I}_{A,N_A} \circ I'_A$ and~$I'_A \circ {I}_{A,N_A}$ both 
are multiplication by~$N_A$. The analytic descriptions of the above clearly are: 
\begin{eqnarray}
\label{OnTangentSpaces}
\left\{
\begin{array}{rcl}
A(\C ) \simeq \C^{g_A} /\Lambda_A & \stackrel{I'_A}{\longrightarrow} &{A'} (\C )\simeq \C^{g_A} /p_A (\Lambda ) \\
z & \mapsto & z 
\end{array}
\right.    
\ {\mathrm{\ and\ }} \ 
\left\{
\begin{array}{rcl}
\C^{g_A} /p_A (\Lambda ) & \stackrel{{I}_{A,N_A}}{\longrightarrow} & \C^{g_A} /\Lambda_A \\
z & \mapsto & N_A z .
\end{array}
\right.    
\end{eqnarray}

\begin{remark}
\label{RemarkIndex}  {\rm Instead of considering two immersions as in~(\ref{desimmersions}), suppose only $A \hookrightarrow J$ is given,
and $K$ is a number field. One might apply~\cite{GR12}, Th\'eor\`eme~1.3, to deduce the existence of an abelian variety~$B$ over $K$ 
such that, with our previous notations, the degree of~$A \times B \stackrel{+}{\longrightarrow} J$:
$$
\vert A \cap B\vert =\vert \Lambda /\Lambda_A \oplus \Lambda_B \vert
$$ 
is bounded from above by an explicit function $\kappa (J)$ of the stable Faltings' height~$\height_F (J)$:
$$
\kappa (J)=\left( (14g)^{64 g^2} [K:\Q] \max (h_F (J), \log [K:\Q ] ,1)^2  \right)^{2^{10} g^3}
$$
and this does not depend on the choice of the embedding $K\hookrightarrow \C$. Note that 
when~$A$ and~$(J \, 
{\mathrm{mod}} \, A)$ are not isogenous (which will be the case for us), then there is actually no choice for that~$B\hookrightarrow J$: it 
has to be the Poincar\'e complement to~$A$. The isogeny~$J\to {A'} \times {B'}$ given by the two projections has 
degree~$\vert p_A (\Lambda )\oplus p_B  (\Lambda )/ \Lambda \vert$, which also is~$\vert A \cap B\vert :=N$. One can therefore take the~$N_A$ appearing
in~(\ref{OnTangentSpaces}) as equal to $N$, and
$$
N\le \kappa (J) .
$$
Making the same for ${B'} \to B$, the above morphism $\Psi$ (see~(\ref{Psi})) is then simply the multiplication $J\stackrel{[N\cdot ]}{\longrightarrow} J$ by 
the integer~$N$. Although we will not need numerical estimates for those quantities in what follows, it is straightforward, using~\cite{Ul00}, to make 
them explicit in our setting of modular curves and jacobians.} 
\end{remark}

\subsubsection{Polarizations and heights}
\label{PolarizationsAndHeights}

    Keeping the above notations and hypothesis, consider in addition now an ample sheaf $\Theta$ on $J$, and let {${I}_A := {I}_{A,N} \colon {A'} \to A$}
(respectively, ${I}_{B,N}$) be as in~(\ref{OnTangentSpaces}). We pull-back $\Theta$ along the composed morphism:
\begin{eqnarray}
\label{chain}
\varphi_A \colon J \stackrel{\pi_A}{\longrightarrow} {A'}  \stackrel{{I}_A}{\longrightarrow} A \stackrel{\iota_A}{\longrightarrow} J
\end{eqnarray}
so that the immersion~${\imath_A \colon A\hookrightarrow J}$ defines a 
polarization~$\Theta_A :=\imath_A^* (\Theta )$ on~$A$, whence a polarization $\Theta_{{A'}} :={{I}_A}^* (\Theta_A )$ on ${A'}$, and finally
an invertible sheaf $\Theta_{J,A} :=\pi_{A}^* (\Theta_{{A'}})$ on $J$. 
Composing the morphisms:
\begin{eqnarray}
\label{chain2}
J \stackrel{\pi_A \times \pi_B}{\longrightarrow} {A'} \times {B'} \stackrel{{I}_A \times {I}_B}{\longrightarrow} A\times B 
\stackrel{\iota_A + \iota_B}{\longrightarrow} J
\end{eqnarray}
gives the multiplication-by-$N$ map: $J \stackrel{ [\cdot N]}{\longrightarrow} J$. Assuming for simplicity $\Theta$ is symmetric one therefore has
\begin{eqnarray}
\label{produitsTheta}
[\cdot N]^* \Theta \simeq \Theta^{\cdot \otimes N^2} \simeq \Theta_{J,A} \otimes_{{\cal O}_J} \Theta_{J,B} .
\end{eqnarray}
If $K$ is a number field, the N\'eron-Tate normalization process associates with $\Theta$ a system of compatible Euclidean 
norms $\height_\Theta =\| \cdot \|^2_{\Theta}$ 
on the finite-dimensional $\Q$-vector spaces $J(F) \otimes_\Z \Q$, for $F/K$ running through the number field extensions of $K$, and similarly 
Euclidean norms $\height_{\Theta_A} := \| \cdot \|^2_{\Theta_A^{\cdot \otimes \frac{1}{N^2}}} :=\frac{1}{N^2} \| \cdot \|^2_{\Theta_A}$ on $A (F) \otimes_\Z \Q$ 
and $\height_{\Theta_B} 
:=   \frac{1}{N^2} \| \cdot \|^2_{\Theta_B}$ on $B (F) \otimes_\Z \Q$ such that, under the isomorphisms $J(F) \otimes_\Z \Q \simeq \left( A(F) \otimes_\Z \Q 
\right) \oplus 
\left( B(F) \otimes_\Z \Q \right)$, one has
\begin{eqnarray}
\label{decomposeHeights}
\height_\Theta =\height_{\Theta_A} +\height_{\Theta_B} .
\end{eqnarray}

Recall from~(\ref{OnTangentSpaces}) the definition of $N_A$, that of the maps $A' \stackrel{{I}_{A ,N_A}}{\longrightarrow} A$ and 
$A\stackrel{\iota_A}{\hookrightarrow} J$.  Denote by $[N_A ]_A$ the multiplication by $N_A$ restricted to $A$. If $V$ is a closed algebraic subvariety of $J$, 
define 
\begin{eqnarray}  
\label{pseudoprojection}
{\cal P}_A (V) := \left( \iota_A [N_A ]_A^{-1} {I}_{A ,N_A} \pi_A  \right) (V)
\end{eqnarray}
as the reduced closed subscheme with relevant support. That map ${\cal P}_A$ would simply be the projection of $V$ on $A$ if $J$ were {\em isomorphic} to the 
product $A\times B$ of subvarieties, and is the best approximation to that projection in our case when $J$ is only isogenous to $A\times B$.

Note that ${\cal P}_A (V)$ is a priori highly non-connected.  All its irreducible geometric components are however obtained from each other by translation 
by a $N_A$-torsion point of $A(\overline{\Q})$. For our later purposes (see proof of Theorem~\ref{theoheight}), we will have the possibility to 
replace ${\cal P}_A (V)$ by one of its components containing a specific point, say $P_0$: we shall denote that component by ${\cal P}_A (V)_{P_0}$, and
refer to it as the ``pseudo-projection'' of $V$ on $A$ containing $P_0$.

\medskip

   Suppose now $J\sim A\times B$ as above is the jacobian of an algebraic curve~$X$ on~$K$ with positive genus~$g$. For $P_0$ a point of $X(K)$ (or 
more generally a $K$-divisor of degree $1$ on $X$) let
\begin{eqnarray}
\label{albanese}
\imath_{P_0} \colon
\left\{
\begin{array}{rcl}
X & \hookrightarrow & J \\
   P & \mapsto &  (P)-(P_0 )
\end{array}
\right.
\end{eqnarray}
be the Albanese embedding associated with~$P_0$. We define the classical Theta divisor~$\theta$ on~$J$ which is the image of~$\imath_{P_0}^{g-1} 
\colon X^{g-1} \to J$ and its symmetric version
\begin{eqnarray}
\label{symmetricTheta}
\Theta :=\left( \theta \otimes_{{\cal O}_J} [-1]^* \theta \right)^{\cdot \otimes \frac{1}{2}} 
\end{eqnarray}
(which is a translate of $\theta$ obtained as $\imath_{\kappa_0}^{g-1} (X^{g-1})$, where $\imath_{\kappa_0} =t_{\kappa_0}^* \imath_{P_0}$ for $t_{\kappa_0}$ the translation by
some $\kappa_0$ with $(2g-2)\kappa_0 =\kappa$: the  canonical divisor on $X$. Of course $\Theta$ does not need to be defined over $K$). 
Our first aim will be to compare the height functions $\| \imath_{P_0} (\cdot )\|_{{\Theta_A}^{\cdot \otimes\frac{1}{N^2}}}$ on $X(F)$, when $X$ is a modular curve, 
with another natural height given by the modular $j$-function.

     We will discuss in Section~\ref{Chow} an Arakelov description of N\'eron-Tate height. We 
conclude this paragraph by a few remarks  as a preparation. Let  ${B_2} :=\{ \omega_1 , \dots ,
\omega_g \}$ be a basis of $H^0 (X(\C ),\Omega^1_{X/\C}) \simeq H^0 (J(\C ),\Omega^1_{J/\C} )$, which is
orthogonal with respect to the norm 
$$
\| \omega \|^2 =\frac{i}{2} \int_{X(\C )} \omega \wedge \overline{\omega} .
$$
The transcendent writing-up of the Abel-Jacobi map $\iota_{P_0} \colon P\mapsto (\int_{P_0}^P \omega_i )_{1\leq i\leq g}$ shows that the pull-back to $X(\C )$ of the 
translation-invariant measure on $J(\C )$, normalized to have total mass $1$, is 
\begin{eqnarray}
\label{flat}
\mu_0 =\frac{i}{2g} \sum_{B_2} \frac{\omega \wedge \overline{\omega}}{\| \omega \|^2} .
\end{eqnarray}

    More generally, $\pi_A \circ \iota_{P_0}$ is, over $\C$, the map $P\mapsto (\int_{P_0}^P \omega )_{\omega \in {B}_2^A}$, 
where ${B}_2^A$ is some orthogonal basis of $H^0 ({A'} (\C ),\Omega^1_{{A'}/\C} )\simeq H^0 (J(\C ),\pi_A^* (\Omega^1_{{A'}/\C}  )) \subseteq  H^0 (J(\C ),
\Omega^1_{J/\C})$. Therefore, writing $g_A :=\dim ({A'})=\dim (A)$ (we assume $A\neq 0$), the pull-back to $X(\C )$ of the translation-invariant measure 
on ${A'}(\C )$ (normalized so to have total mass $1$ on the curve again) is 
\begin{eqnarray}
\label{flat_A}
\mu_A =\frac{i}{2g_A} \sum_{{B}_2^A} \frac{\omega \wedge \overline{\omega}}{\| \omega \|^2} .
\end{eqnarray}

\subsection{Modular curves}
\label{ModularCurves}

    Here we recall a few classical facts on the minimal regular model of the modular curve $X_0 (p)$, for $p$ a prime number, over a ring of algebraic
integers. The first general reference on this topic is~\cite{DR73};  see also \cite{CE11} or \cite{Me08}, \cite{Me11b}.

\subsubsection{The $j$-height}

  The quotient of the completed Poincar\'e upper half-plane $\HH \cup \PPP^1 (\Q )$ by the classical congruence subgroup $\Gamma_0 (p)$ defines
a Riemann surface $X_0 (p)(\C )$ which is known to have a geometrically connected smooth and proper model over $\Q$. All through  this paper, we
denote its genus by~$g$.

   The first technical theme of this article is the explicit comparison of various heights on $X_0 (p)({\overline{\Q}})$. When $V$ is an algebraic 
variety over a number field $K$, any finite $K$-map $\varphi \colon V\to \PPP^N_K$ to some projective space %
defines a naive Weil height on $V(\overline{K})$. This applies in particular when $V$ is a curve and $\varphi$ is the finite morphism defined
by an element of the function field of $V$, and in the case of a modular curve $X_\Gamma$ associated with some congruence 
subgroup $\Gamma$, say, a natural 
height to choose on~$X_\Gamma (\overline{\Q})$ is precisely Weil's height $\height (P)= \height (j(P) )$ relative to the classical~$j$-function. 
The degree of the associated map $X_\Gamma \to X (1)\simeq \PPP^1$ is~$[{\mathrm{PSL}}_2 (\Z ) :\Gamma ]$, so that number is the class of 
our Weil height in the N\'eron-Severi group $\NS (X_\Gamma )$ identified with $\Z$. More explicitly if~$X =X_\Gamma$ is defined over the number 
field~$K$, say, the $j$-morphism is
$$
\left\{
\begin{array}{rcrcl}
X & \stackrel{\jmath}{\to} & \PPP^1_K ={\mathrm{Proj}} (K[X_0 ,X_1 ]) & \hookleftarrow {{\A^1_K}}  & ={\mathrm{Spec}}  (K[X_1 /X_0 ] ) \\ 
P & \mapsto & (1,j(P)) =(1/j(P) ,1) & \leftarrow j(P) &=\frac{X_1}{X_0} (P) ,
\end{array}
\right.
$$  
and the Weil height of a point~$P\in X(K)$ is therefore the naive height of its~$j$-invariant as an algebraic number: 
$$
\height (P)=\height (j(P)) =\frac{1}{[K:\Q ]} \sum_{v\in M_K} [K_v :\Q_v ] \log ({\mathrm{max}} (1,\vert j(P)\vert_v ))
$$
which is also Weil's projective height $\height (\jmath (P))$ with respect to the above basis~$(X_0 ,X_1 =X_0 j)$ 
of global sections of ${\cal O}_{\PPP^1_K} (1)$. Our Weil height on~$X$ is  associated with the linear equivalence 
classes of divisors~$D$ corresponding to~$\jmath^* ({\cal O}_{\PPP^1_K} (1))$, so that
$$
D \sim ({\mathrm{poles\ of\, }}j{\mathrm{\, on\, }X})(\sim ({\mathrm{zeroes\ of\ }}j))\sim \sum_{c\in \{ 
{\mathrm{cusps\, of\, }X\} }} e_c .c 
$$
where each~$e_c$ is the ramification index of~$c$ via~$\jmath$. 

  Those considerations lead to explicit comparisons with other heights. Indeed, a more intrinsic way to define 
heights on algebraic varieties is provided by Arakelov theory. Defining this properly in the case of our 
modular curves demands a precise description of regular models for them, which we now recall.

\subsubsection{Regular models}
\label{minimalregular}
  
    The normalization of the $j$-map $X_0 (p) \to X(1)_{/\Z} \simeq \PPP^1_{/\Z}$ over  $\Z$ defines a model for $X_0 (p)$, that we call the modular model,
it is smooth over $\Z [1/p]$.  

   We fix a number field $K$, write ${\cal O}_K$ for its ring of integers, and deduce by base change a model for $X_0 (p)$ over ${\cal O}_K$. We know
its only singularities are normal crossing, so after a few blow-ups if necessary we obtain a regular model of $X_0 (p)$ over ${\cal O}_K$:
see Theorem 1.1.d) of the Appendix of~\cite{Ma77}. 
We denote it from now on by~${\cal X}_0 (p)_{/{\cal O}_K}$, or simply ${\cal X}_0 (p)$ if the context prevents confusion. We stress here that  for $F/K$ 
a field extension, ${\cal X}_0 (p)_{/{\cal O}_F}$ is {\it not} the base change to ${\cal O}_F$ of ${\cal X}_0 (p)_{/{\cal O}_K}$ if $F/K$ ramifies above $p$. 
Let $v$ be a place of ${\cal O}_K$ above $p$, with residue field $k(v)$. The dual graph of~${\cal X}_0 (p)$ at $v$ is made of two extremal vertices, which 
we label~$C_0$ and~$C_\infty$, containing the cusps~$0$ and~$\infty$ respectively (see Figure~1). Those two vertices, which correspond to irreducible
components of genus $0$, are linked by
$$
s:=g+1
$$ 
branches. Each branch corresponds to a singular point $S$ in~${\cal X}_0 (p) (\F_{p^2} )$, which in turn parameterizes an 
isomorphism class of supersingular elliptic curve~$E_S$ in characteristic~$p$. 

   The Fricke involution $w_p$ acts on the dual graph as the continuous
isomorphism which exchanges $C_0$ and $C_\infty$ and acts on the branches as a generator of ${\mathrm{Gal}} (\F_{p^2} /\F_p )$.

We list the supersingular points as $S(1),\dots S(s)$  and for each one define
\begin{eqnarray}
\label{doubleVn}
w_n :=\# {\mathrm{Aut}} (S(n)) /\langle \pm 1\rangle :=\# {\mathrm{Aut}}_{\F_{p^2}}  (E_{S(n)} ) /\langle \pm 1\rangle 
\end{eqnarray}
which is equal to $1$ except in the (at most two) cases when the underlying supersingular elliptic curve has $j$-invariant $1728$ or $0$, 
where it is equal to $2$ or $3$ respectively. Now each path, or branch, on our dual graph at~$v$ passes through~$(w_n e-1)$ vertices (for~$e$ the ramification 
index of $K$ at~$v$), that is, again, equal to $e-1$ except for at most two branches: one of length~${2e-1}$ (obtained by blowing-up 
the supersingular point of moduli~${j=1728\ {\mathrm{mod}\  v}}$, if it exists), and a path of length~${3e-1}$  (obtained by blowing-up, if needed, at the 
supersingular point of moduli~$j=0\ {\mathrm{mod} \ v}$). We enumerate the vertices~$\{ C_{n,m} \}_{1\leq m
\leq w_n e-1}$ in the $n^{\mathrm{th}}$ path.  
We also denote by $w({\mathrm{Eis}} )$ the familiar quantity $\sum \frac{1}{w_n}$, the sum being taken over 
the set of all supersingular points of~${\cal X}_0 (p)_{/{\cal O}_{K,v}}$. The well-known Eichler mass formula says that
\begin{eqnarray}
\label{w(Eis)}
w({\mathrm{Eis}} ) =\sum_{1\le n\le s} \frac{1}{w_n} = \frac{p-1}{12} 
\end{eqnarray}
(see for instance~\cite{Gr87}, p.~117). Recall this implies that the genus~$g$ of $X_0 (p)$ is asymptotically equivalent 
to $p/12$ (the exact formula depending on the residue class of $p$ mod $12$) and in any case:
\begin{eqnarray}
\label{genussize}
\frac{p-13}{12} \leq g \leq \frac{p+1}{12} 
\end{eqnarray}
(see for instance p.~117 of~\cite{Gr87} again).


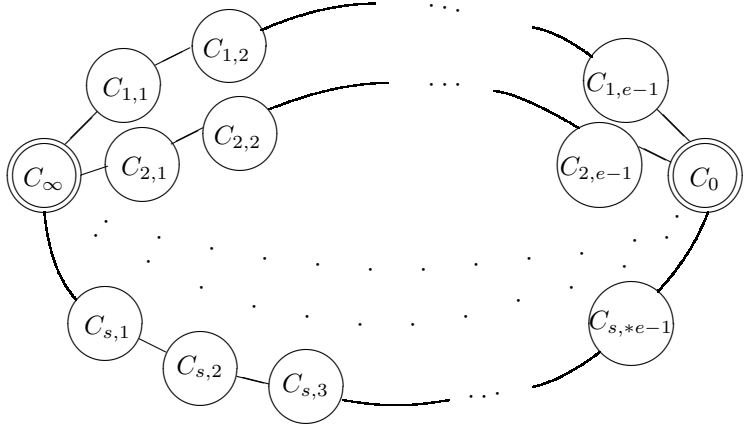
\begin{figure}
\label{figure}
\begin{center}
\begin{picture}(320,100)(-30,-80)

\put(-8,-4){$C_\infty$}
\put(0,0){\circle{25}}
\put(0,0){\circle{29}}


\put(8,12){\line(1,1){12}}

\put(22,30){$C_{1,1}$}
\put(30,34){\circle{28}}

\put(42,42){\line(2,1){13}}

\put(60,45){$C_{1,2}$}
\put(70,49){\circle{28}}

\qbezier(82,55)(105,65)(125,65)

\put(145,62){$\cdot$}
\put(150,61){$\cdot$}
\put(155,60){$\cdot$}

\qbezier(185,56)(195,54)(207,45)

\put(205,32){$C_{1,e-1}$}
\put(220,36){\circle{30}}

\put(232,25){\line(1,-1){12}}


\put(14,0){\line(3,1){10}}

\put(29,0){$C_{2,1}$}
\put(37,4){\circle{28}}

\put(48,12){\line(2,1){12}}

\put(64,12){$C_{2,2}$}
\put(74,16){\circle{28}}

\qbezier(85,25)(110,35)(130,35)

\put(145,32){$\cdot$}
\put(150,32){$\cdot$}
\put(155,32){$\cdot$}

\qbezier(170,32)(180,31)(202,18)

\put(195,0){$C_{2,e-1}$}
\put(210,4){\circle{30}}

\put(225,11){\line(2,-1){12}}


\qbezier(0,-14)(2,-36)(12,-47)

\put(15,-60){$C_{s,1}$}
\put(23,-56){\circle{28}}

\put(36,-62){\line(2,-1){10}}

\put(50,-75){$C_{s,2}$}
\put(59,-73){\circle{28}}

\put(73,-76){\line(4,-1){12}}

\put(90,-82){$C_{s,3}$}
\put(99,-80){\circle{28}}

\qbezier(113,-85)(133,-89)(153,-85)

\put(160,-86){$\cdot$}
\put(165,-86){$\cdot$}
\put(170,-85){$\cdot$}

\qbezier(185,-80)(195,-78)(210,-67)

\put(206,-58){$C_{s,*e-1}$}
\put(222,-56){\circle{32}}

\qbezier(232,-44)(245,-30)(251,-14)


\put(22,-20){$\cdot$}
\put(42,-26){$\cdot$}
\put(62,-30){$\cdot$}
\put(82,-34){$\cdot$}
\put(102,-36){$\cdot$}
\put(122,-38){$\cdot$}
\put(142,-38){$\cdot$}
\put(162,-36){$\cdot$}
\put(182,-34){$\cdot$}
\put(202,-32){$\cdot$}
\put(222,-27){$\cdot$}
\put(238,-18){$\cdot$}

\put(18,-25){$\cdot$}
\put(38,-35){$\cdot$}
\put(58,-45){$\cdot$}
\put(78,-52){$\cdot$}
\put(98,-56){$\cdot$}
\put(118,-59){$\cdot$}
\put(138,-59){$\cdot$}
\put(158,-56){$\cdot$}
\put(178,-50){$\cdot$}
\put(198,-45){$\cdot$}
\put(218,-37){$\cdot$}
\put(234,-26){$\cdot$}


\put(244,-4){$C_0$}
\put(250,0){\circle{25}}
\put(250,0){\circle{29}}

\put(-8,5){}

\end{picture}
\end{center}
\caption{Dual graph of ${\cal X}_0 (p)_{/{\cal O}_K}$ at $v$.}
\label{faut}
\end{figure}


  Abusing a bit notations, $C_\infty$ will sometimes be also denoted as~$C_{n,0}$, and similarly~$C_0$ 
might be written as~$C_{n,w_n e}$. We choose as a basis for~$\oplus_C \Z \cdot C$ the ordered set
\begin{eqnarray}
\label{basis}
{\cal B}= \left( C_\infty ,(C_{1,1} , C_{1,2} , \cdots ,C_{1,e-1} ) ,( C_{2,1} ,\cdots ,C_{2,e-1} ) , \dots ,(C_{s,1} ,\cdots ,C_{s,w_s e -1} ), C_0\right)
\end{eqnarray}
(that is, we enumerate the vertices by running through each branch successively, and put the possible branches of length twice or thrice the generic length at the end).
At bad places~$v$ the intersection matrix restricted to each submodule~$\oplus_{m=1}^{w_n e-1}  \Z \cdot C_{n,m}$ (for some fixed branch of index~$n$) is 
then~$(\log (\# k(v)) \cdot {\cal M}_0 $, where
\begin{eqnarray}
\label{M_0}
{\cal M}_0 =
\left(
\begin{array}{cccccc}
-2 & 1 & 0 & 0 & \cdots  & 0  \\ 
1 & -2 & 1 & 0 & \cdots  & 0 \\
0 & 1 & -2 & 1 & \cdots & 0  \\
\vdots &   &   \ddots & \ddots & \ddots  &\vdots  \\
 \vdots  &   &   & 1 & -2  & 1 \\
0 & 0 & \cdots  & 0 & 1 & -2
\end{array}
\right) ,
\end{eqnarray}
whose only dependence on~$n$ is that its type is $(w_n e -1)\times (w_n e -1)$. That matrix has determinant $(-1)^{w_n e -1} w_n e$. 
Define the row vectors: 
$$
L :=(1 \  0 \  0 \  \cdots \  0     ),  \  L' := (0 \  0 \  0 \  \cdots \  1     )
$$
(with length implicitly defined by the next lines) and the transpose column vectors:
$$
V:= L^t ,\ V' :={L'}^t .
$$
The intersection matrix on the whole space $\Z^{\cal B}$ is finally~$(\log (\# k(v)) \cdot {\cal M} )$ for
\begin{eqnarray}
\label{M}
{\cal M} =
\left(
\begin{array}{cccccc}
-s & L                &   L                & \cdots & L & 0 \\
V & {\cal M}_0 &  0                  & \cdots  & 0 & V' \\
V & 0                &  {\cal M}_0  & \cdots  & 0 & V' \\
\vdots    &   \vdots       &    \vdots     &        \ddots              &  \vdots   &   \vdots     \\
V   & 0                 & 0          &   \cdots                               & {\cal M}_0 & V' \\
0 & L'                      &   L'                &   \cdots    &   L'  &  -s    
\end{array}
\right)   .
\end{eqnarray}
(This has to be modified in the obvious way when $e_v =1$.)

\subsubsection{Winding quotients, their dimension}
\label{Winding}
    
    We denote as usual the jacobian of $X_0 (p)_{\Q}$ by $J_0 (p)$. As follows from section~\ref{minimalregular}, ${\cal X}_0 (p)$ is semistable over $\Z$, 
and the neutral component of the N\'eron model ${\cal J}_0 (p)$ of $J_0 (p)$ is  a semi-abelian scheme over $\Z$ (and an abelian scheme over $\Z [1/p]$).
Its neutral component represents the neutral component ${\mathrm{Pic}}^0_\Z ({\cal X}_0 (p))$ of the relative Picard functor of ${\cal X}_0 (p)$
over $\Z$.

   We know from Shimura's theory that the natural decomposition of cotangent 
spaces into Hecke eigenspaces induces a corresponding decomposition over $\Q$ of abelian varieties up to isogenies:
\begin{eqnarray}
\label{Shimuradecomposition}
J_0 (p) \sim \prod_{f\in B_2/{\mathrm{Gal}} (\overline{\Q}/\Q )} J_f 
\end{eqnarray}
indexed by Galois orbits in some set $B_2$ of newforms. A first useful sorting of this decomposition comes from the sign of
the functional equations for the $L$-functions of eigenforms $f$, that is, whether $w_p (f)$ equals $f$ or $-f$. One accordingly writes $J_0 (p)^-$
for the optimal quotient abelian variety associated with $\prod_{f, w_p (f)=-f} J_f$ in~(\ref{Shimuradecomposition}), and similarly $J_0 (p)^+$, so that
$J_0 (p)^- =J_0 (p)/(1+w_p )J_0 (p)$ and $J_0 (p)^+ =J_0 (p)/(1-w_p )J_0 (p)$. One
knows that
$$
\dim J_0 (p)^- =(\frac{1}{2} +o(1)) \dim J_0 (p)
$$ 
(see e.g.~\cite{Ro01}, Lemme~3.2).

    A more subtle object is  the {\it winding quotient} $J_e$, defined as the optimal quotient of $J_0 (p)$ corresponding to $\prod_{f, L(f,1)\neq 0} J_f$ in 
decomposition (\ref{Shimuradecomposition}). One can write 
\begin{eqnarray}
\label{defineWinding}
J_e =J_0 (p)/I_e J_0 (p)
\end{eqnarray} 
for some ideal $I_e$ of the Hecke algebra $\T_{\Gamma_0 (p)}$. Similarly, $J_e^\perp =J_0 (p)/I_e^\perp J_0 (p)$ will denote the optimal quotient corresponding 
to $\prod_{f, L(f,1)= 0} J_f$. 
For obvious reasons regarding signs of functional equations, $J_e$ is contained in $J_0 (p)^-$. But more is  expected: in line with the principle that
``{the vanishing order of a (modular) $L$ functions at the critical point should generically be as small as allowed by parity}'', 
Brumer (\cite{Br95}) conjectured that, as $p$ tends to infinity,
\begin{eqnarray}
\label{Brumer}
({\bf {?}})   \hspace{1cm}  \dim J_e =(1 -o(1)) \dim J_0 (p)^-  . \hspace{1cm}   ({\mathrm{Brumer}})   
\end{eqnarray}
Equivalently, it is conjectured that $\dim J_e =(\frac{1}{2} +o(1)) \dim J_0 (p)$, or that the dimensions of $J_e$ and $J_e^\perp$ 
should be, asymptotically in $p$, of equal size.  Note that~(\ref{Brumer}) above is also implied by the ``Density Conjecture'' of~\cite{ILS00}, p.~56 et seq., 
see also Remark~F on p.~65\footnote{Quoting Olga Balkanova (private communication), {\it ``Theorem 1.1 in \cite{ILS00} is proved for the test 
function $\phi$, whose Fourier transform is supported on the interval $[-2,2]$. The density conjecture claims that the same results are true 
without restriction on Fourier transform of $\phi$, see formula 1.9 [of loc. cit.].''}}. Actually, what we eventually need in this article (see Section~\ref{7}) is a 
weaker form of (\ref{Brumer}), which is: 
\begin{eqnarray}
\label{TuM'asDitDeLeFaireMoinsFort}
({\bf {?}})   \hspace{1cm}    \dim J_e > \frac{\dim J_0 (p)}{3} +\frac{2}{3} 
\end{eqnarray}
for large enough $p$. An important theorem of Iwaniec-Sarnak and Kowalski, Michel and Vanderkam asserts something nearly as good,  that is :
\begin{eqnarray}
\label{KM}
(\frac{1}{4}  -o(1) )\dim J_0 (p)  \le \dim J_e (\leq (\frac{1}{2} +o(1)) \dim J_0 (p)  )
\end{eqnarray}
as~$p$ goes to infinity (so that $(\frac{1}{2} -o(1)) \dim J_0 (p)  \le \dim J_e^\perp  \leq (\frac{3}{4} +o(1))\dim J_0 (p) $,  see 
\cite{IS00}, Corollary~13 and \cite{KMK00}). Breaking that $\frac{1}{4}$ is known to be closely linked to the Landau-Siegel zero problem.
Assuming the Generalized Riemann Hypothesis for $L$-functions of modular forms, Iwaniec, Luo and Sarnak prove one can 
improve $\frac{1}{4}$ to $\frac{9}{32}$ (\cite{ILS00}, Corollary~1.6, (1.54))... That seems to be all for the moment.  

\medskip 
 
     The central object of this paper will eventually be the maps 
$$
X_0 (p)^{(d)} \to J_0 (p) \to  J_e 
$$   
from symmetric products of $X_0 (p)$ (mainly the curve itself and its square) to the winding quotient. 

\bigskip

\section{Arithmetic Chow group of modular curves}
\label{Chow}

   We now give a description of the Arakelov geometry of $X_0 (p)$, relying on the work of many people: that topic has  been pioneered 
by Abbes, Ullmo and Michel (\cite{AU95}, \cite{MU98}, \cite{Ul00}) and notably developed by Edixhoven, Couveignes and their coauthors (see~\cite{CE11}). 
We shall also use the work of Bruin (\cite{Br14}), Jorgenson-Kramer (\cite{JK06}) and Menares  (\cite{Me08}, \cite{Me11b}) 
among others. We refer to those articles and their bibliography for general facts on  Arakelov theory (see~\cite{Ch86}, \cite{EJ11a}).

    Let ${\cal X}$ be any regular and proper arithmetic surface over the integer ring ${\cal O}_K$ of a number field $K$. Fixing in general 
smooth hermitian metrics~$\mu$ on the base changes of $\cal X$ to $\C$, it follows from the basics of Arakelov theory that for any horizontal 
divisor $D$ on ${\cal X}$ over ${\cal O}_K$ there are Green functions $g_{\mu ,D}$ on each Archimedean completion~${\cal X} (\C )$ satisfying 
the differential equation 
$$
\Delta g_{\mu ,D} = -\delta_D + \deg (D) \mu 
$$   
for~$\Delta =\frac{1}{i\pi} \partial \overline{\partial}$ the Laplace operator and $\delta_D$ the Dirac distribution relative to $D_{\C}$ on ${\cal X} (\C )$. 
The function $g_{\mu ,D}$  is integrable on the compact Riemann surface ${\cal X} (\C )$ endowed with its measure $\mu$, and uniquely determined 
up to an additive constant which is often fixed by imposing the normalizing condition that 
\begin{eqnarray}
\label{normalizing}
\int_{{\cal X} (\C )} g_{\mu ,D} \mu =0.
\end{eqnarray}   
When the horizontal divisor $D$ is a section $P_0$ in ${\cal X}({\cal O}_K )$, one will sometimes also use the notation $g_\mu ({P_0},z)$ 
for $g_{\mu ,P_0} (z)$. The Green functions relative to fixed smooth $(1,1)$-forms $\mu$ allow one to define an Arakelov intersection product relative 
to the $\mu$, which will be denoted by $[\cdot ,\cdot ]_\mu$, or $[\cdot ,\cdot ]$ if there is no ambiguity about the implicit form. In particular
the index will often be dropped for divisors intersections  of which one at least is vertical, where the choice of $\mu$ does not intervene.

  We shall denote by~$\mu_0$ the canonical Arakelov $(1,1)$-form on the Riemann surface ${\cal X} (\C )$ (assumed to have positive genus), inducing 
the ``flat metric''. It corresponds to the pullback, by any Albanese morphism ${\cal X} (\C) \to {\mathrm{Jac}} ({{\cal X}}_K )(\C )$, of 
the ``cubist"  metric in the sense of Moret-Bailly~(\cite{MB85}, more about this shortly) on the jacobian~${\mathrm{Jac}} ({{\cal X}}_K )$, associated with 
the N\'eron-Tate normalized height~$\height_{\Theta}$. 

   We now specialize to the case of ${\cal X}_0 (p)$ as in Section~\ref{ModularCurves}. If~$f$ is a modular form of weight 2 for~$\Gamma_0 (p)$, 
let~$\| f\|^2$ be its Petersson norm. Because newforms are orthogonal in prime level we have, as in (\ref{flat}):
\begin{eqnarray}
\label{mu}
\mu_0 :=\frac{i}{2 \dim (J_0 (p))} \sum_{f \in B_2} \frac{f\frac{dq}{q} \wedge \overline{f\frac{dq}{q}} }{\| f\|^2 } .
\end{eqnarray}

   We shall also need to consider N\'eron-Tate heights $\height_A$ for subabelian varieties $A\hookrightarrow J_0 (p)$
as in section~\ref{PolarizationsAndHeights} (recall $A\neq 0$). The associated $(1,1)$-form $\mu_A$ is given by~(\ref{flat_A}). More specifically, we focus on   
$\height_{\Theta_e}$ on ${J}_e$ (as in~(\ref{decomposeHeights}) and around, for $A'=J_e$)
which induces a height $\height_{\Theta_e} \circ \iota_{e,P_0}$ on $X_0 (p)$ via the map $\iota_{e,P_0} \colon X_0 (p)\hookrightarrow J\twoheadrightarrow {J}_e$. 
The curvature form of the hermitian sheaf on $X_0 (p)$ defining the Arakelov height associated with $\height_{\Theta_e} \circ \iota_{e,P_0}$  is  
\begin{eqnarray}
\label{mu_e}
\mu_e := \frac{i}{2\dim (J_e )} \sum_{f \in B_2 [I_e ]} \frac{f\frac{dq}{q} \wedge \overline{f\frac{dq}{q}} }{\| f\|^2 } .
\end{eqnarray}
where $B_2 [I_e ]$ stands for the set of newforms killed 
by the ideal $I_e$ defining $J_e$ as in~(\ref{defineWinding}).

\begin{remark}
\label{wpinvariance}  {\rm  Notice that both $\mu_0$ and $\mu_e$, or any $\mu_A$ above, are invariant by pull-back $w_p^*$ by the Fricke involution. In particular
the Arakelov intersection products $[\cdot ,\cdot ]_{\mu_0}$ and  $[\cdot ,\cdot ]_{\mu_e}$, relative to $\mu_0$ and $\mu_e$ respectively, 
are $w_p$-invariant. The latter was clear already from the fact that, more generally, $w_p$ is an orthogonal symmetry on $J_0 (p)$ endowed with its quadratic 
form $\height_\Theta$, which respects the orthogonal decomposition $\prod_{f } J_f $ of~(\ref{Shimuradecomposition}).}
\end{remark}

   One can now specialize the Hodge index theorem to our modular setting (see~\cite{Me11b}, Theorem~4.16, \cite{Me08}, 
Theorem~3.26, or more generally~\cite{MB85}, p.~85 et seq.):
\begin{theo}
\label{Hodge}
Let~$K$ be a number field, $\mu$ be a smooth non-zero $(1,1)$-form on $X_0 (p)(\C )$ as given in~(\ref{flat_A}), and $\widehat{CH}(p)_{\R, \mu}^{\mathrm{num}}$ be the arithmetic 
Chow group with real coefficients up to numerical equivalence of~${\cal X}_0 (p)$ over~${\cal O}_K$, relative to $\mu$. Denote by~$\infty$ the horizontal 
divisor defined by the $\infty$-cusp on~${\cal X}_0 (p)$ over $\Z$ (which is the Zariski closure of the $\Q$-point $\infty$ in $X_0 (p)(\Q )$), compactified 
with the normalizing condition~(\ref{normalizing}). Write~$\R \cdot X_\infty$ for the line of divisors with real coefficients supported on some fixed full 
vertical fiber~$X_\infty$. Define, for all~$v\in {\mathrm{Spec}} ({\cal O}_K )$ above~$p$, the $\R$-vector space:
$$
G_v :=\bigoplus_{C\neq C_\infty} \R \cdot C
$$
where the sum runs through all the irreducible components of~${\cal X}_0 (p)\times_{{\cal O}_K}  k(v)$ except~$C_\infty$ (the one 
containing~$\infty (k(v))$). 
Identify finally $J_0 (p)  (K )/{\mathrm{torsion}}$ with the subgroup of divisor classes $D_0$ which are compactified under the normalizing condition
$g_{D_0} (\infty) =0$ (which is therefore {\em different} from~(\ref{normalizing})).  
One  has a decomposition:
\begin{eqnarray}
\label{HHodge}
\widehat{CH}(p)_{\R, \mu}^{\mathrm{num}} = (\R \cdot \infty \oplus \R \cdot X_\infty ) \oplus_{v\vert p}^\perp G_v \oplus^\perp \left( J_0 (p)  (K ) 
\otimes \R \right)
\end{eqnarray}
where the ``$\oplus^\perp$" mean that the direct factors are mutually orthogonal with respect to the Arakelov intersection product. Moreover, the 
restriction of the self-intersection product to $J_0 (p)  (K ) \otimes \R$ coincides with twice the opposite of the N\'eron-Tate pairing.
\end{theo}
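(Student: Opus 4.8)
The plan is to establish the decomposition (\ref{HHodge}) by combining three standard ingredients: the classical arithmetic Hodge index theorem on an arithmetic surface, the explicit combinatorics of the vertical fibers of ${\cal X}_0(p)$ recalled in Section~\ref{minimalregular}, and Arakelov's identification of the Néron--Tate pairing on the jacobian with (minus) the self-intersection of fibral-degree-zero divisors normalized by $g_{D_0}(\infty)=0$. First I would recall the general structure: $\widehat{CH}(p)^1_{\R}$ fits in an exact sequence relating it to divisors on the generic fiber, vertical fibral divisors, and the archimedean contributions, and after passing to the quotient by numerical equivalence relative to $\mu$ (killing the radical of the intersection form) one gets a finite-dimensional $\R$-vector space carrying a nondegenerate symmetric bilinear form of known signature — this is exactly the ``specialization of Hodge index'' quoted from \cite{Me11b}, \cite{Me08}, \cite{MB85}. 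The point is to read off the \emph{orthogonal} decomposition adapted to $X_0(p)$.

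\emph{Second}, I would isolate the ``trivial'' part. The class $\infty$ of the $\infty$-cusp section (compactified by (\ref{normalizing})) and a full fiber class $X_\infty$ span a hyperbolic-type plane: $X_\infty$ has zero self-intersection and zero intersection with every horizontal divisor is controlled by its degree, while $[\infty,X_\infty]=\log\#k(v)$-type contributions make the Gram matrix of $(\infty,X_\infty)$ nondegenerate of signature $(1,1)$ after the normalization; crucially $X_\infty^2=0$ and $[X_\infty, C]=0$ for any fibral $C$, and $[X_\infty, D_0]=\deg(D_0)=0$ for jacobian classes, so $\R\cdot X_\infty$ is in the radical \emph{relative to the fibral and jacobian pieces} but paired with $\infty$. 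This is where one uses that any two full fibers are linearly equivalent up to the trivial part, so only one copy of $\R\cdot X_\infty$ survives.

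\emph{Third}, the fibral part: for each $v\mid p$ the components of ${\cal X}_0(p)\times k(v)$ span a space whose intersection form is $(\log\#k(v))\cdot{\cal M}$ with ${\cal M}$ as in (\ref{M}); this matrix is negative semidefinite with one-dimensional kernel spanned by the total fiber, so modulo $X_\infty$ (and after removing $C_\infty$, whose class is recovered from $X_\infty$ minus the others) the quotient $G_v=\sum_{C\neq C_\infty}\R\cdot C$ is negative \emph{definite}, hence nondegenerate, and by general arithmetic-surface theory it is Arakelov-orthogonal both to the horizontal/jacobian part (fibral divisors meet a horizontal divisor only through degrees, which vanish once we are in the fibral-degree-zero locus) and to the $\infty$-part after the normalization. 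The different $G_v$ for different $v$ are mutually orthogonal since they live over disjoint fibers. Finally the jacobian piece $J_0(p)(K)\otimes\R$, realized as fibral-degree-zero divisor classes of degree $0$ on the generic fiber normalized by $g_{D_0}(\infty)=0$, is orthogonal to everything else by construction (the normalization was chosen precisely to kill the pairing with $\infty$ and with $X_\infty$, and degree $0$ kills pairing with fibral divisors), and the self-intersection restricted there equals $-\langle\cdot,\cdot\rangle_{NT}$ by the Faltings--Hriljac theorem; the form $\mu$ enters only in choosing which admissible Green functions are used, which is consistent because $\mu_0$ (or $\mu$) is the curvature making the jacobian metric the cubist one, as recalled around (\ref{mu}).

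\emph{The hard part} — or rather the part demanding the most care rather than genuine difficulty — is bookkeeping the normalizations: the same Chow group is being described with two incompatible conventions, (\ref{normalizing}) (mean-zero against $\mu$) for the $\infty$ and $X_\infty$ summands versus $g_{D_0}(\infty)=0$ for the jacobian summand, and one must check these glue to a genuine direct sum decomposition rather than merely a filtration, i.e.\ that the change-of-normalization correction terms land in the $\R\cdot\infty\oplus\R\cdot X_\infty$ plane and thus do not disturb orthogonality of the other factors. Concretely this amounts to verifying that subtracting the constant $g_{D_0}(\infty)$ from a Green function shifts a class by a multiple of $X_\infty$ (archimedean fiber), which is immediate, and that $[\infty,\infty]_\mu$ together with $[\infty,X_\infty]$ give an invertible $2\times 2$ block — a short computation using (\ref{w(Eis)}) for the contributions at $v\mid p$ and the definition of the Arakelov self-intersection of a section. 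Once these normalization compatibilities are pinned down, the orthogonality statements and the signature/nondegeneracy of each block follow from the general Hodge index theorem cited above together with the explicit ${\cal M}$, and the identification of the jacobian block with $-\langle\cdot,\cdot\rangle_{NT}$ is Faltings--Hriljac.
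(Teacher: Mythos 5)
Your proposal is correct and follows essentially the same route as the paper, which simply invokes the general arithmetic Hodge index theorem (Moret-Bailly, and Menares' Theorem~4.16 for the modular case) and then records the decomposition recipe: subtract $d\cdot\infty$, correct by a vertical divisor $\Phi_D$ to land in the neutral component (giving the $G_v$-parts), normalize the Green function so that the pairing with $\infty$ vanishes (giving the jacobian part, with Faltings--Hriljac identifying the pairing there with $-\langle\cdot,\cdot\rangle_{NT}$), and absorb the remainder into $\R\cdot X_\infty$. Your write-up just spells out in more detail the orthogonality and nondegeneracy checks that the paper delegates to the cited references.
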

\begin{proof}
The proof can be immediately adapted from that of~\cite{Me11b}, Theorem 4.16, for  $L^1_2$-admissible measures (a setting allowing to define convenient 
actions of the Hecke algebra on the Chow group). For further computational use we recall how one decomposes divisors 
in practice. %
   Take~$D$ in~$\widehat{CH}(p)_{\R, \mu}^{\mathrm{num}}$, with degree~$d$ on the generic fiber. There is a vertical divisor~$\Phi_D$, with 
support in fibres above places of bad reduction (that is, of characteristic $p$), such that~$(D-d\infty -\Phi_D )$ has a real multiple which belongs to the neutral 
component ${\mathrm{Pic}}^0 ({\cal J}_0 (p))_{/{\cal O}_K }$. That~$\Phi_D$ is well-defined up to multiple of full vertical fibres, so we can 
assume~$\Phi_D$ belongs to~$\oplus^\perp G_p$ (and is then unambiguously defined). One associates to~$(D-d\infty -\Phi_D)\in \R \cdot {\cal J}^0_0 (p) 
({\cal O}_K )$ an element~$\delta$ in~$\widehat{CH}(p)_{\R, \mu}^{\mathrm{num}}$ by imposing a compactification such that~$[\infty ,\delta ]_\mu  =0$. 
The general Hodge index theorem (see for instance~\cite{MB85}) then finally asserts that~$(D-d\infty -\Phi_D -\delta )$ can be written as an element 
in~$\R \cdot X_\infty$. $\Box$
\end{proof}

\bigskip

In order to later on interpret the N\'eron-Tate height (associated with some given (symmetric) invertible sheaf) as an Arakelov height in a suitable sense
(see~\cite{Ab97} paragraph~3, or~\cite{MB86}), we will need to compute explicitly, given~$P\in X_0 (p)(K)$, the vertical divisor~$\Phi_P =
\oplus_{v\vert p} \Phi_{P,v}$  such that
\begin{eqnarray}
\label{PhiPP}
[C,P- \infty -\Phi_P ]=0
\end{eqnarray} 
for any irreducible component of any fiber of~${\cal X}_0 (p) \to {\mathrm{Spec}} ({\cal O}_K )$, as in the proof of Theorem~\ref{Hodge}. 
\begin{lem}
\label{VerticalComput}
Consider a bad fiber~${\cal X}_0 (p)_{k(v)}$, with~$e_v$ the absolute ramification index of $v$, 
and write $\# k(v) =p^{f_v}$. Let~$P\in X_0 (p)(K)$ and let~$C_{P,v}$ be the irreducible component of~${\cal X}_0 (p)_{k(v)}$ which contains~$P (k(v))$.
As~${\cal X}_0 (p)$ is assumed to be regular, the section $P$ hits each fiber on its smooth locus, so that the component $P$~belongs to is 
unambiguously defined in each bad fiber. Write
$$
\Phi_{P,v} =\sum_{n,m} a_{n,m} [C_{n,m} ]
$$
with notations as in~(\ref{basis}). Recall that, by our convention, $a_{C_\infty} =a_{*,0} =0$.  
\begin{enumerate}
\item If $C_{P,v} = C_0$ then for all $n$ and $m$,
$$
a_{n,m}  = \frac{-12}{(p-1) \cdot w_n }  \cdot m .
$$ 
(Recall (see~(\ref{doubleVn})) that~$w_n :=\# {\mathrm{Aut}} (S (n)) /\langle \pm 1\rangle \in \{ 1, 2, 3\}$, 
with~$S(n)$ the supersingular point corresponding to the branch $\{ C_{n,.} \}$.)  

\medskip

  For further use we henceforth write $\Phi_{C_0}$ for the above vector $\Phi_{P,v} \in \Z^{\cal B}$. 
\item   If $C_{P,v} =C_{n_0 ,m_0 } \neq C_0 ,C_\infty$ then
\begin{itemize}
\item  for~$n=n_0$ and $m\in \{ 0,m_0 \}$, one has $a_{n,m}  =  \left(  \frac{m_0}{w_{n_0} e_v} (1-\frac{12}{(p-1) w_{n_0} }  )  -1 \right) \cdot m$;
\item   for~$n=n_0$ and $m\in \{ m_0 ,w_{n_0} e_v \}$, one has $a_{n,m}  = \left(  \frac{m_0}{w_{n_0} e_v} (1-\frac{12}{(p-1)w_{n_0} }  )  \right) \cdot 
m -m_0 $;
\item  for~$n\neq n_0$ and all~$m\in \{ 0 ,w_n e_v \}$, one has $a_{n,m}  =  \frac{-12m_0}{ (p-1)w_{n_0} e_v} \cdot \frac{m}{w_n}$.
\end{itemize}
\item (Of course if~$C_{P,v} =C_\infty$ then~$\Phi_{P,v} =0$.)
\end{enumerate}
\end{lem}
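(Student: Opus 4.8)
The plan is to solve the linear system $[C, P-\infty-\Phi_P]=0$ directly, using the explicit intersection matrix $\mathcal{M}$ from~(\ref{M}) together with the structure of each branch. First I would set up the equations: writing $\Phi_{P,v}=\sum a_{n,m}[C_{n,m}]$ with $a_{*,0}=0$ (the convention $a_{C_\infty}=0$), the condition $[C_{n,m}, P-\infty-\Phi_{P,v}]=0$ for an interior vertex $C_{n,m}$ of a branch $n$ (with $1\le m\le w_ne_v-1$) reads, using the $\mathcal{M}_0$ part of the matrix,
$$
a_{n,m-1}-2a_{n,m}+a_{n,m+1} = [C_{n,m},\infty]-[C_{n,m},P] = -\delta_{C_{P,v},C_{n,m}},
$$
since $\infty$ lies on $C_\infty$ and so does not meet any interior $C_{n,m}$, while $P$ meets exactly the component $C_{P,v}$ it specializes to, with multiplicity $1$ (as $\mathcal X_0(p)$ is regular and $P$ hits the smooth locus). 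This is a discrete Poisson equation on each path: away from the source vertex $C_{P,v}$, the sequence $m\mapsto a_{n,m}$ is affine in $m$, and at the source it has a corner with prescribed jump in slope. So on each branch the solution is piecewise-linear in $m$, pinned by the boundary values $a_{n,0}$ (at $C_\infty$, equal to $0$) and $a_{n,w_ne_v}$ (at $C_0$), plus the corner condition if $C_{P,v}$ lies on that branch.

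Next I would pin down the two ``global'' unknowns: the value $a_0:=a_{C_0}$ common to all branches at the $C_0$ end, and consistency at the $C_\infty$ and $C_0$ vertices themselves. The equations $[C_\infty, P-\infty-\Phi_{P,v}]=0$ and $[C_0, P-\infty-\Phi_{P,v}]=0$ involve the rows $(-s, L, L, \dots, 0)$ and $(0, L', \dots, L', -s)$ of $\mathcal{M}$: the $C_\infty$-equation reads $-s\cdot a_{C_\infty}+\sum_n a_{n,1} = -\deg(\cdots)$ type relation — more precisely, since $[C_\infty,\infty]$ is the self-intersection contribution compactified by~(\ref{normalizing}) and $[C_\infty,C_\infty]=-s$, one gets $\sum_{n} a_{n,1} - s a_{C_\infty} = \big(\text{something involving whether }C_{P,v}=C_\infty\big)$; with $a_{C_\infty}=0$ this determines $\sum_n a_{n,1}$, which combined with the piecewise-linear shape forces $a_0$. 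Case (3) is then immediate ($C_{P,v}=C_\infty$ forces the trivial solution, since $\Phi\equiv 0$ solves everything by $w_p$-symmetry reasoning or directly). For case (1), $C_{P,v}=C_0$: there is no corner on any branch, so $a_{n,m}=\frac{m}{w_ne_v}a_0$ is linear, and the $C_0$-equation together with Eichler's mass formula $\sum 1/w_n=(p-1)/12$ (see~(\ref{w(Eis)})) pins $a_0$, giving $a_0 = \frac{-12 e_v}{p-1}$, hence $a_{n,m}=\frac{-12}{(p-1)w_n}m$ after simplifying — note this is independent of $e_v$, a useful sanity check. For case (2), $C_{P,v}=C_{n_0,m_0}$: the branch $n_0$ has a corner at $m_0$ (slope decreases by $1$ there, by the $-\delta$ on the right side), all other branches stay linear, and solving the two-unknown system ($a_0$ and the slope on branch $n_0$ below the corner) against the $C_\infty$- and $C_0$-equations yields the three displayed formulas.

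The main obstacle I anticipate is not conceptual but bookkeeping: getting all the boundary contributions right, in particular the precise right-hand sides of the $C_\infty$- and $C_0$-equations (where the compactification normalization~(\ref{normalizing}) enters via the self-intersection $[\infty,\infty]$ and $[C_\infty,\infty]$ terms) and the correct sign and placement of the source term $-\delta_{C_{P,v}}$, together with the special branches of length $2e_v-1$ and $3e_v-1$ (the $j=1728$ and $j=0$ cases) — those have $w_n=2,3$ but the same matrix shape $\mathcal{M}_0$, so the formulas $a_{n,m}=\frac{m}{w_ne_v}a_0$ on them should come out uniformly, but one must check the corner case when $P$ specializes onto one of those exceptional branches. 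A clean way to organize this is to first verify that $\Phi\equiv 0$ together with $\Phi\equiv(\text{const})\cdot X_v$ span the kernel of the ``test against all $C$'' map modulo full fibers (so the solution is unique in $\oplus^\perp G_v$), then exhibit the claimed vectors and check they satisfy the Poisson equation on interior vertices (trivial, piecewise-linear) plus the two endpoint equations (where Eichler's formula does the work), rather than deriving them from scratch. Once existence-and-uniqueness is in hand, verification is a finite check per case.
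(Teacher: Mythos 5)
Your plan is essentially the paper's own proof: write out $[C,P-\infty-\Phi_{P,v}]=0$ component by component using the matrix~(\ref{M}), note that along each branch the coefficients $a_{n,m}$ are (piecewise) affine in $m$, and pin the constants with the $C_\infty$- and $C_0$-equations together with Eichler's mass formula~(\ref{w(Eis)}); your ``prove uniqueness modulo full fibers, then verify the displayed vectors'' variant is just a reorganization of the same computation, and your case (a) value $a_0=-12e_v/(p-1)$ is the paper's.

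One concrete slip to fix before running the verification: the source term has the wrong sign. The condition gives $[C_{n,m},\Phi_{P,v}]=[C_{n,m},P]-[C_{n,m},\infty]$, and for an interior vertex $[C_{n,m},\infty]=0$, so the interior equation is $a_{n,m-1}-2a_{n,m}+a_{n,m+1}=+\delta_{C_{P,v},C_{n,m}}$, not $-\delta$; consequently the slope \emph{increases} by $1$ when crossing $C_{n_0,m_0}$ from the $C_\infty$-end towards $C_0$, which is exactly what the two displayed slopes in case (b) show ($\frac{m_0}{w_{n_0}e_v}(1-\tfrac{12}{(p-1)w_{n_0}})-1$ below the corner versus $\frac{m_0}{w_{n_0}e_v}(1-\tfrac{12}{(p-1)w_{n_0}})$ above). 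With your $-\delta$ and ``slope decreases by $1$'' the case (b) formulas would not check out; since the source only affects case (b), your case (a) computation survives, but the corner bookkeeping must be redone with the correct sign (the paper's equation at $C_{P,v}$ is $1-a_{n_0,m_0-1}+2a_{n_0,m_0}-a_{n_0,m_0+1}=0$). Also, the normalization~(\ref{normalizing}) never enters anywhere in this lemma: every pairing in the system involves a vertical divisor (e.g. $[C_\infty,\infty]=\log\#k(v)$ is a purely finite intersection number), so there is no archimedean or Green-function contribution to track in the $C_\infty$- and $C_0$-rows, whose right-hand sides are simply $\delta_{C_{P,v}=C_\infty}-1$ and $\delta_{C_{P,v}=C_0}$.
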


\medskip

\begin{rema}
\label{CoeffPhi}

{\rm We have distinguished different cases above because the proof naturally leads to doing so, and it will be of interest below to have the 
simpler case $(a)$  explicitly displayed. Note however that all outputs are actually  covered by the formulae of case $(b)$. 
Notice also that, in case $(a)$, all coefficients of~$\Phi_{P,v}$ satisfy
$$
0\geq  a_{n,m} \geq a_0 :=a_{C_0} =a_{n,w_n m} =-12e_v / (p-1) .
$$
As for case $(b)$, all coefficients of~$\Phi_{P,v}$ satisfy
$$
0\geq  a_{n,m} \geq  a_{n_0 ,m_0} =\left(  \frac{m_0}{w_{n_0} e_v } (1-\frac{12}{(p-1) w_{n_0} }  )  -1 \right) \cdot m_0 
$$
(remember $0\leq m\leq w_n e_v$ for all~$m$). Computing the minimum of the above right-hand as a polynomial in~$m_0$ gives
\begin{eqnarray}
\label{sizePhi}
0\geq  a_{n,m} \geq  \frac{-e_v w_{n_0}}{4(1-\frac{12}{(p-1) w_{n_0}  }   )}   \ge \frac{-e_v w_{n_0}}{4-\frac{3}{w_{n_0}  }} \geq -3e_v
\end{eqnarray}
(recalling we always assume  $p\geq 17$). 
}
\end{rema}
\begin{proof}
Given the intersection matrix~(\ref{M}) and condition~(\ref{PhiPP}): ${[C,P-\infty -\Phi_{P,v} ]=0}$ for all~$C$ in the fiber at $v$ gives the matrix equation:
\begin{eqnarray}
\label{Phi_0}
\log (\# k(v)) {\cal M}\cdot \Phi_{P,v} =  \log (\# k(v)) (-1, 0, \cdots ,1,0, \cdots ,0)^t 
\end{eqnarray}
where the coefficient~$1$ (respectively, $-1$) in the right-hand column vector is at the place corresponding to ${C_{P,v} =C_{n,m}}$ (respectively, to ${C_{\infty} =C_{n,0}}$)
in the ordering of our component basis~(\ref{basis}). That is however more easily solved by running through the dual graph of~${\cal X}_0 (p)_{k(v)}$ 
``branch by branch" as follows. Suppose first that~$C_{P,v} =C_0$, and recall  $a_{C_\infty} =0$ by convention. Equation~(\ref{PhiPP}) translates into: 
\begin{itemize}
\item $(-1-\sum_{n=1}^s a_{n,1} =0)$  for~$C=C_\infty$;
\item $(1+sa_0 -\sum_{n=1}^s a_{n,w_n e_v -1} =0)$ for~$C=C_0$;
\item $(a_{n,m-1} -2a_{n,m} +a_{n,m+1} =0)$ for all others~$C=C_{n,m}$.  
\end{itemize}
The equations of the third line in turn define, for each branch (that is, for fixed~$n$), a sequence defined by linear double induction with solution 
$a_{n,m}  = m \cdot \alpha_n$ for some~$\alpha_n$ which is easily computed to be~$\frac{-1}{w({\mathrm{Eis}} ) \cdot w_n} =\frac{-12}{(p-1)w_n}$ (see~(\ref{w(Eis)})). 
(Note this is true even for $e_v =1$.)

  For case~(b), the intersection equations become: %
\begin{itemize}
\item $(-1-\sum_{n=1}^s a_{n,1} =0)$ for~$C=C_\infty$;
\item $(sa_0 -\sum_{n=1}^s a_{n,w_n e_v -1} =0)$ for~$C=C_0$;
\item $(1-a_{n_0 ,m_0 -1} +2a_{n_0 ,m_0} -a_{n_0 ,m_0 +1} =0)$ if~$C=C_{P,v} =C_{n_0 ,m_0}$;
\item $(a_{n,m-1} -2a_{n,m} +a_{n,m+1} =0)$ for all others~$C=C_{n,m}$.
\end{itemize}
As above, solving these equations in all branches not containing~$C_{P,v}$ gives $a_{n,m} =m\beta_n$ and the same is true in the branch 
containing~$C_{P,v}$ for~$m\in \{ 0,\dots , m_0 \}$. We also see that~$a_{n_0 ,m_0 +1} =(m_0 +1)\beta_{n_0} +1$, and then $a_{n_0 ,m} =
m(\beta_{n_0} +1)  -m_0$ for~$m\in \{ m_0 +1 ,w_{n} e_v \} $. We have~$a_0 =w_{n} e_v \beta_n$ for all~$n\neq n_0$, so let~$\beta$ be the 
common value of the $\beta_n$ for $n\neq n_0$ with $w_{n}=1$. (There is always such an $n$ as we assumed $p>13$. Note also those 
computations still cover the case~$e_v =1$.) From~$\beta =a_0 /e_v$ and~$a_0 =w_{n_0} e_v (\beta_{n_0}  +1) -m_0$ we 
derive
$$
\beta_{n_0} =(a_0 +m_0 -w_{n_0}e_v  )/w_{n_0} e_v  =  \frac{\beta}{w_{n_0}} + \frac{m_0}{w_{n_0} e_v} -1.
$$
Hence, because of the first equation~$(-1-\sum_{n=1}^s a_{n,1} =0)$,
$$
0= -1-  \beta_{n_0} -\sum_{1\leq n\leq s, n\neq n_0} \beta /w_{n} =-\beta w({\mathrm{Eis}})  -\frac{m_0}{w_{n_0}e_v } 
$$ 
so that
$$
\beta = \frac{-m_0}{ w({\mathrm{Eis}})w_{n_0} e_v } = \frac{-12\, m_0}{ (p-1)w_{n_0} e_v } .
$$
$\hspace{14cm} \Box$
\end{proof}

\medskip

\begin{lem}
\label{0-infty}  
Let $\mu$ be some $(1,1)$-form on $X_0 (p) (\C )$ as in Theorem~\ref{Hodge}.
\begin{enumerate}
\item 
The class in $\widehat{CH}(p)_{\R, \mu}^{\mathrm{num}}$ of the cuspidal divisor $(0)-(\infty )$ satisfies
\begin{eqnarray}
\label{(0)-(infty)}
(0)-(\infty ) \equiv \Phi^0_{C_0} := \Phi_{C_0} +\sum_{v\vert p} \frac{6e_v}{p-1} (\sum_C [C] ) =\sum_{v\vert p} \sum_{n,m}   \frac{6}{(p-1)}    
(e_v -  \frac{2m}{w_n } )[C_{n,m} ] 
\end{eqnarray}
with notations as in Lemma~\ref{VerticalComput} (a). This is an eigenvector of the Fricke $\Z$-automorphism  $w_p$
with eigenvalue $-1$.
\item One has $[\infty ,\infty ]_{\mu} =[0,0]_{\mu} = [0,\infty ]_{\mu}  -\frac{6\log p}{p-1}$. If $\mu$ is the Green-Arakelov measure $\mu_0$ then
$0\geq [\infty ,\infty ]_{\mu_0} =O(\log p /p)$ and similarly $[0,\infty ]_{\mu_0} =O(\log p/p)$ with $[0,\infty ]_{\mu_0}$ non-positive too, at least 
for large enough $p$. If $\mu =\mu_e$ (see (\ref{mu_e})) - or more generally any sub-measure of $\mu_0$ - then $[0,\infty ]_{\mu_e} =O(p\log p)$. 
\end{enumerate}
\end{lem}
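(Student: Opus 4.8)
The plan is to derive part (a) directly from Lemma \ref{VerticalComput}(a), then use part (a) together with the defining equation $[C,P-\infty-\Phi_P]=0$ to compute the self-intersections in part (b). First I would recall that $(0)-(\infty)$ is a degree-zero divisor, so by the recipe in the proof of Theorem \ref{Hodge} its class in $\widehat{CH}(p)^{\mathrm{num}}_{\R,\mu}$ is obtained by subtracting the vertical correction $\Phi_{(0)-(\infty)}$; but since the section $0$ meets each bad fiber on the component $C_0$ and $\infty$ meets it on $C_\infty$, one has $\Phi_{(0)-(\infty),v}=\Phi_{C_0}$ in the notation of Lemma \ref{VerticalComput}(a). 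The stated identity is then just the observation that $(0)-(\infty)-\Phi_{C_0}$ lies (up to a real multiple) in $\mathrm{Pic}^0$ and is orthogonal to $\infty$ after one adds an appropriate multiple of the full fibers $\sum_C[C]$; the coefficient $\tfrac{6e_v}{p-1}$ is pinned down by the normalization, and substituting $a_{n,m}=\tfrac{-12m}{(p-1)w_n}$ from Lemma \ref{VerticalComput}(a) gives the coefficient $\tfrac{6}{p-1}(e_v-\tfrac{2m}{w_n})$ of $[C_{n,m}]$. The $w_p$-eigenvalue $-1$ follows because $w_p$ exchanges the cusps $0$ and $\infty$ (so it sends $(0)-(\infty)$ to its negative) and acts on the dual graph as described in Section \ref{ModularCurves}, exchanging $C_0\leftrightarrow C_\infty$ and reversing each branch $m\mapsto w_ne_v-m$, under which the displayed vertical vector is visibly negated.

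For part (b), the equalities $[\infty,\infty]_\mu=[0,0]_\mu$ come from $w_p$-invariance of the intersection product (Remark \ref{wpinvariance}) together with $w_p(0)=\infty$. To get $[\infty,\infty]_\mu=[0,\infty]_\mu-\tfrac{6\log p}{p-1}$ I would pair the class identity from part (a) with $\infty$: since $(0)-(\infty)\equiv\Phi^0_{C_0}$ and $\infty$ is orthogonal to the $\mathrm{Pic}^0$ part, $[0,\infty]_\mu-[\infty,\infty]_\mu=[\Phi^0_{C_0},\infty]_\mu$, and the latter is a purely combinatorial intersection of a vertical divisor with the section $\infty$, which hits the component $C_\infty$ in each bad fiber; reading off the relevant coefficient of $\Phi^0_{C_0}$ (namely the coefficient near $C_\infty$, i.e. $m$ small) and summing the contributions $\log(\#k(v))=f_v\log p$ over $v\mid p$, together with $\sum_v e_vf_v=[K:\Q]$, yields exactly $\tfrac{6\log p}{p-1}$ after dividing by $[K:\Q]$ in the normalized intersection.

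The size estimates for the specific measures follow from known bounds in the literature. For $\mu=\mu_0$ the Green–Arakelov form, $[\infty,\infty]_{\mu_0}$ is the Arakelov self-intersection of a cusp, which is $\le 0$ and $O(\log p/p)$ by the results of Abbes–Ullmo and Michel–Ullmo (and the explicit estimates recalled from \cite{CE11}); since $[0,\infty]_{\mu_0}=[\infty,\infty]_{\mu_0}+\tfrac{6\log p}{p-1}=O(\log p/p)$ as well, and its negativity for large $p$ follows because the main term $\tfrac{6\log p}{p-1}$ is dominated by the (negative) Archimedean Green's value $g_{\mu_0}(0,\infty)$, which one estimates via the explicit description of $\mu_0$ in (\ref{mu}). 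For $\mu=\mu_e$, or any sub-measure of $\mu_0$, one loses the cancellation: the Green's function $g_{\mu_e}$ differs from $g_{\mu_0}$ by a factor controlled by $\dim J/\dim J_e$, which under Brumer (or even unconditionally via (\ref{KM})) is $O(1)$, but the normalizing correction (\ref{normalizing}) relative to $\mu_e$ introduces a term of size $O(p\log p)$ coming from the supersingular combinatorics — essentially the mass $w(\mathrm{Eis})=\tfrac{p-1}{12}$ entering through the vertical divisors. I expect the main obstacle to be the bookkeeping in this last estimate: tracking how the change of measure from $\mu_0$ to $\mu_e$ propagates through the normalization condition and the vertical-divisor decomposition of Theorem \ref{Hodge}, and checking that nothing worse than $O(p\log p)$ appears. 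The part (a) identity and the algebraic relations in part (b) are, by contrast, essentially formal once Lemma \ref{VerticalComput} is in hand.
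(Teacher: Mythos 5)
Your part (a) is where the real gap lies, and it is twofold. First, nothing in your argument kills the $J_0 (p)(K)\otimes \R$-component of $(0)-(\infty )$ in the decomposition~(\ref{HHodge}): saying that $(0)-(\infty )-\Phi_{C_0}$ ``lies in $\mathrm{Pic}^0$'' after the vertical correction is true of \emph{any} degree-zero divisor (that is exactly the recipe in the proof of Theorem~\ref{Hodge}), and for a generic pair of sections the class retains a nonzero N\'eron--Tate component, so the asserted equivalence with a purely vertical divisor would be false. The paper's proof uses the Manin--Drinfeld theorem (the class of $(0)-(\infty )$ is torsion in $J_0 (p)(\Q )$) precisely at this point, and you never invoke it. Second, you do not correctly eliminate the residual $\R\cdot X_\infty$-component: ``orthogonal to $\infty$, pinned down by the normalization'' is not a usable criterion, since $[\Phi^0_{C_0} ,\infty ]$ is nonzero and $[(0)-(\infty ),\infty ]_\mu$ is exactly the quantity part (b) is meant to compute, so it cannot serve as input. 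The paper settles this by the Fricke antisymmetry you only mention to verify the eigenvector claim: $(0)-(\infty )$ and $\Phi^0_{C_0}$ are both $w_p$-anti-invariant while $X_\infty$ is $w_p$-invariant, so their difference, being a multiple of $X_\infty$, vanishes.

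Granted part (a), your linear relations in (b) are fine (though to get $[0,0]_\mu$ for an \emph{arbitrary} smooth $\mu$ you should pair the class identity with $0$ as well as with $\infty$, as the paper does; your $w_p$-invariance argument, via Remark~\ref{wpinvariance}, only covers $w_p$-invariant forms). But your mechanism for the $\mu_e$ estimate is wrong. Since the cusps do not meet at finite places, $[0,\infty ]_{\mu_e} =-g_{\mu_e} (0,\infty )$ is a purely archimedean quantity: no vertical divisors, no correction from the normalizing condition~(\ref{normalizing}), and no supersingular mass $w({\mathrm{Eis}})$ enter it. Moreover Green functions do not scale by $\dim J_0 (p)/\dim J_e$ when passing from $\mu_0$ to $\mu_e$ (if they did, you would get $O(\log p/p)$, contradicting your own claimed $O(p\log p)$); the difference of the two Green functions solves a Laplace equation with source $\mu_e -\mu_0$ and is not controlled by a dimension ratio alone. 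The actual argument is analytic: from $0\leq \mu_e \leq 5\mu_0$ (cf.~(\ref{CompMeasures})) one applies Bruin's explicit bounds (\cite{Br14}, Theorem~7.1), refined via the Michel--Ullmo comparison of $\mu_0$ with the hyperbolic measure \cite{MU98}, to bound $\vert g_{\mu_e} (0,\infty )\vert$ by $O(p\log p)$ (see Remark~\ref{CorrectOrder}). Without that input, the step you yourself flag as the main obstacle does not go through.
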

\begin{proof}
By the Manin-Drinfeld theorem, $(0)-(\infty )$ is torsion as a divisor in the generic fiber ${\cal X}_0 (p)\times_\Z \Q$. One therefore has
$$
(0)-(\infty ) \equiv \Phi +cX_\infty 
$$
in the decomposition~(\ref{HHodge}) of $\widehat{CH}(p)_{\R, \mu}^{\mathrm{num}}$, for~$\Phi$ some vertical divisor with support in the fibers above~$p$. 
This divisor is determined by the same equations~(\ref{PhiPP}) as $\Phi_{C_0}$ in Lemma~\ref{VerticalComput} {\it (a)}. 
For each  ${v\vert p}$ the full $v$-fiber $\sum_C [C] $ is numerically equivalent to some real multiple of the archimedean fiber $X_\infty$; there is
therefore a real number $a$ such that 
$$
\Phi^0_{C_0} := \Phi_{C_0} +\sum_{v\vert p} \frac{6e_v}{p-1} (\sum_C [C] ) \equiv \Phi_{C_0} +aX_\infty .
$$
Now $w_p$ switches the cusps $0$ and $\infty$ so the divisor $(0)-(\infty )$ is anti-symmetric for $w_p$: 
$$
w_p^* ((0)-(\infty ))  =-((0)-(\infty )) 
$$
and clearly $w_p^* (\Phi^0_{C_0} )=-\Phi^0_{C_0} $. The fact that $w_p$ preserves 
the archimedean fiber concludes the proof of~{\em (a)}. 

   To prove {\em (b)} we compute
\begin{eqnarray}
0&=& [0-\infty -\Phi_{C_0 }^0 , \infty  ]_\mu   = [0,\infty ]_\mu  -[\infty ,\infty ]_\mu   -\frac{6}{p-1} \log p \nonumber 
\end{eqnarray}
and
\begin{eqnarray}
0&=& [0-\infty -\Phi_{C_0 }^0 , 0  ]_\mu   = [0,0 ]_\mu  -[0 ,\infty ]_\mu   +\frac{6}{p-1} \log p \nonumber 
\end{eqnarray}
so that $[\infty ,\infty ]_\mu = [0 ,0 ]_\mu = [0,\infty ]_\mu   -\frac{6\log p}{p-1}$.
The cusps $0$ and $\infty$ are known not to intersect on ${\cal X}_0 (p)_{/\Z}$ so that $[0,\infty ]_\mu =-g_{{\mu}} (0,\infty )$. When~$\mu =\mu_0$,  
this special value of the Arakelov-Green function has been computed by Michel and Ullmo: it satisfies   
$$
g_{\mu_0} (0,\infty )=\frac{1}{2g} \log p (1+O(\frac{\log \log p}{\log p})) =  O(\frac{ \log p}{ p}) 
$$
by~\cite{MU98}, formula~(12) on p.~650. 
Finally, using~\cite{Br14}, Theorem~7.1~(c) and paragraph~"8, and plugging into Bruin's method the estimates of~\cite{MU98} regarding the comparison 
function~$F(z) =O((\log p)/p)$ between Green-Arakelov and Poincar\'e measures, we obtain a bound of shape $O(p\log p)$ 
for $\vert g_{\mu_e} (0,\infty ) \vert$ (see also Remark~\ref{CorrectOrder}). 
This completes the proof of {\em (b)}.  $\Box$
\end{proof}

\medskip

Instrumental in the sequel will be the explicit decomposition of the relative dualizing sheaf~$\omega$ in the arithmetic Chow group.

\begin{propo}
\label{decomposeomega}
The relative dualizing sheaf $\omega$ of the minimal regular model ${\cal X}_0 (p)\to {\cal O}_K$ can be written,
in the decomposition~(\ref{HHodge}) of $\widehat{CH}(p)_{\R, \mu_0}^{\mathrm{num}}$ relative to the canonical
Green-Arakelov $(1,1)$-form~${\mu_0}$, as:
\begin{eqnarray}
\label{decomposeomegaequation}
\omega = (2g-2) \infty +\sum_{v\vert p} \Phi_{\omega ,v} +\omega^0 + [K:\Q ] c_\omega X_\infty 
\end{eqnarray}
where the above components satisfy the following properties.

\begin{itemize}
\item   The number $c_\omega$ is equal to~$\frac{(1-2g)}{[K:\Q ]} [\infty ,\infty ]_{\mu_0}$, so that $0\leq  c_\omega \le   O(\log p )$.  \\
\item  
Set
$$
H_4 :=\frac{1}{2} \ {\sum_{P\in {\cal H}_4}} (P-\frac{1}{2} (0+\infty ) ), \ H_3 :=\frac{2}{3} \  {\sum_{p\in {\cal H}_3}}  (P-\frac{1}{2} (0+\infty ) )
$$ 
where the sums run over the sets ${\cal H}_4$ and ${\cal H}_3$, whose number of elements can be $0$ or $2$, of Heegner points of~$X_0 (p)$ with $j$-invariant $1728$
and $0$ respectively. Define 
$$
H_4^0 :=H_4  +[K:\Q ] c_4 X_\infty \hspace{0.4cm} {\mathrm{and}}  \hspace{0.4cm} H_3^0 :=H_3 +[K:\Q ] c_3 X_\infty
$$ 
for two numbers~$c_3$ and $c_4$ with 
${c_3  = O( \log p )}$, and the same for~$c_4$. (Recall this means the $H_*$ are compactified with the normalizing condition~(\ref{normalizing}), whereas
the $H_*^0$ are the orthogonal projections on~$\left( J_0 (p) (K ) \otimes \R \right) \subseteq \widehat{CH}(p)_{\R, \mu_0}^{\mathrm{num}} $ of 
the~$H_*$, so that $[\infty ,H_*^0 ]_{\mu_0} =0$, for $*=3$ or $4$.) One sets  $\omega^0 :=-{H}_4^0 -{H}_3^0$, which can be chosen in $J_0 (p)^0 (\overline{\Q} )$.

\item  Finally, the component~$\Phi_{\omega ,v}$ in each~$G_v$ for~$v\vert p$ is
\begin{eqnarray}
\label{Phi_omega000}  
\Phi_{\omega ,v} =-12 \frac{ (g-1) }{(p-1) }   \sum_{n,m}     \frac{ m}{ w_n}     C_{n,m} 
\end{eqnarray}
with notations as in~(\ref{basis}). We therefore have~$\Phi_{\omega ,v} =(g-1)\Phi_{C_0}$ using notations of Lemma~\ref{VerticalComput}. In particular, 
recalling~$e_v$ is the ramification index of $K/\Q$ at $v$,  
the coefficients~$\omega_{n,m}$ of $\Phi_{\omega ,v}$ in~(\ref{Phi_omega000}) satisfy
\begin{eqnarray}
\label{sizePhiOmega}
0\geq \omega_{n,m} \geq -e_v . 
\end{eqnarray}
\end{itemize}

\end{propo}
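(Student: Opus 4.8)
The plan is to decompose $\omega$ in the arithmetic Chow group by first pinning down its class on the generic fiber, then computing the vertical correction terms fiber-by-fiber using the intersection matrix, and finally identifying the archimedean multiple of $X_\infty$ via the normalizing condition. On the generic fiber $X_0(p)_{/\overline\Q}$, the canonical class $\omega_{/\Q}$ has degree $2g-2$; by the classical description of the $j$-morphism and its ramification, or equivalently using that the cuspidal divisors are torsion (Manin--Drinfeld) together with the known expression for the canonical divisor in terms of the elliptic points of $X_0(p)$, one writes $\omega_{/\Q} \sim (2g-2)\infty + (\text{torsion}) + (\text{contribution of elliptic points})$. The elliptic points of order $2$ and $3$ contribute exactly the Heegner divisors $\sum_4 (P)$ and $\sum_3 (P)$ with the weights $\tfrac12$ and $\tfrac23$ dictated by the orbifold canonical bundle formula; subtracting half of $(0)+(\infty)$ from each $P$ (using Lemma~\ref{0-infty}~(a) to move between $0$ and $\infty$ and absorb the torsion) produces the divisors $H_4$, $H_3$, up to torsion supported in the fibers over $p$ and a multiple of $X_\infty$. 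This identifies the first and third bullet items once one takes orthogonal projections onto $J_0(p)(K)\otimes\R$, turning $H_*$ into $H_*^0$; the bounds $c_3, c_4 = O(\log p)$ then come from $[\infty, H_*]_{\mu_0}$, which is controlled because each Heegner point intersects $\infty$ trivially (they lie in distinct components or are disjoint) so that $[\infty, H_*]_{\mu_0}$ reduces to special values of the Arakelov--Green function $g_{\mu_0}$ at CM points and cusps, estimated via \cite{MU98} exactly as in Lemma~\ref{0-infty}.

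Next I would compute the vertical part $\Phi_{\omega,v}$ at each bad place $v\mid p$. By the adjunction/Riemann--Roch formula on the regular arithmetic surface, $\omega$ restricted to a fiber is characterized by the intersection conditions $[C, \omega - (2g-2)\infty - \Phi_{\omega,v}] = 0$ for every component $C$ of ${\cal X}_0(p)_{k(v)}$, together with $[C,\omega] = 2g(C) - 2 + C^2$ (self-intersection plus arithmetic-genus data of the chain components, which all have genus $0$ except $C_0, C_\infty$). Feeding in the explicit intersection matrix ${\cal M}$ from~(\ref{M}) and ${\cal M}_0$ from~(\ref{M_0}), this becomes the same kind of branch-by-branch linear double-induction system already solved in the proof of Lemma~\ref{VerticalComput}: along each branch the second difference of the coefficients is constant, forcing $\omega_{n,m}$ to be linear in $m$, and the boundary equations at $C_0$, $C_\infty$ together with the Eichler mass formula~(\ref{w(Eis)}) fix the slope to be $-12(g-1)/((p-1)w_n)$. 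Hence $\Phi_{\omega,v} = (g-1)\Phi_{C_0}$ in the notation of Lemma~\ref{VerticalComput}~(a), and the bound $0 \geq \omega_{n,m} \geq -e_v$ in~(\ref{sizePhiOmega}) follows directly from plugging $g \leq (p+1)/12$ and $m \leq w_n e_v$ into $-12(g-1)m/((p-1)w_n)$, since $12(g-1)/(p-1) \leq 1$.

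Finally, for $c_\omega$ I would pair~(\ref{decomposeomegaequation}) with $\infty$ under $[\cdot,\cdot]_{\mu_0}$. Since $\Phi_{\omega,v}$ lies in $G_v$ and the $H_*^0$ lie in $J_0(p)(K)\otimes\R$, both are $\mu_0$-orthogonal to $\infty$ by Theorem~\ref{Hodge}; likewise $[X_\infty,\infty] = \log$ of the norm of $v$ summed appropriately, but normalized so that $[\infty,X_\infty]_{\mu_0}$ contributes $[K:\Q]$ times a unit when one uses the convention that $X_\infty$ is a full archimedean fiber --- more precisely one uses $[\infty, X_\infty] = [K:\Q]$ (degree one at each archimedean place) so that the pairing isolates $[K:\Q] c_\omega = (2g-2)[\infty,\infty]_{\mu_0} + [K:\Q] c_\omega [\infty, X_\infty]$... here I would instead pair with $\infty$ directly: $0 = [\infty, \omega] = (2g-2)[\infty,\infty]_{\mu_0} + [K:\Q] c_\omega$, using that $\omega$ has an admissible representative with $[\infty,\omega]_{\mu_0}=0$ in the chosen normalization (this is the content of the adjunction formula $[\infty,\omega+\infty]=0$ for a section, i.e. $[\infty,\omega]_{\mu_0} = -[\infty,\infty]_{\mu_0}$). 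Solving gives $c_\omega = (1-2g)[\infty,\infty]_{\mu_0}/[K:\Q]$, and since $[\infty,\infty]_{\mu_0} = O(\log p/p)$ by Lemma~\ref{0-infty}~(b) while $2g-1 = O(p)$, we get $0 \leq c_\omega \leq O(\log p)$, with the sign coming from $[\infty,\infty]_{\mu_0} \leq 0$. The main obstacle I expect is bookkeeping the two different normalizing conditions (the condition~(\ref{normalizing}) versus $g_{D_0}(\infty)=0$) consistently when splitting $H_*$ into $H_*^0 + [K:\Q]c_* X_\infty$ and extracting $c_\omega$; getting the constants and signs right there, and cleanly justifying that the elliptic-point contribution is exactly $-H_4 - H_3$ rather than some twist thereof, is where the care is needed, while the vertical computation is essentially a rerun of Lemma~\ref{VerticalComput}.
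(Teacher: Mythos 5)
Your route for the generic fibre (canonical divisor of $X_0(p)$ expressed via the elliptic points, with the cuspidal torsion absorbed by Manin--Drinfeld), for the vertical parts (genus-zero adjunction on each component plus the same branch-by-branch linear system as in Lemma~\ref{VerticalComput}, giving $\Phi_{\omega,v}=(g-1)\Phi_{C_0}$ and the bound~(\ref{sizePhiOmega})), and for $c_\omega$ (Arakelov adjunction at the section $\infty$ together with the orthogonality of the decomposition~(\ref{HHodge})) is exactly the paper's. Two small slips there: adjunction gives $[C,\omega]=(2p_a(C)-2)\log \#k(v)-[C,C]$, not $+[C,C]$, and \emph{all} components of the fibre at $v$, including $C_0$ and $C_\infty$, have genus $0$; neither affects the slope $-12(g-1)/((p-1)w_n)$ you end up with.

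The genuine gap is the claim $c_3,c_4=O(\log p)$. You are right that the finite contribution to $[\infty,H_*]_{\mu_0}$ vanishes (the CM points have integral $j$-invariant, so never meet the cusps at finite places), so $c_*$ reduces to the Green values $g_{\mu_0}(\sigma(P),\infty)$ at the CM points of $j$-invariant $1728$ and $0$. But these are \emph{not} supplied by~\cite{MU98} ``exactly as in Lemma~\ref{0-infty}'': formula~(12) of~\cite{MU98} only evaluates the cusp--cusp value $g_{\mu_0}(0,\infty)$, and a generic sup-norm bound on $g_{\mu_0}$ (Bruin/Merkl, as used later in Lemma~\ref{LeNouvelEssentiel}) would only yield $O(p^2)$, too weak for the stated $O(\log p)$. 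The paper gets the bound indirectly: it takes from~\cite{MU98}, p.~673, the Gross--Zagier-type evaluation of $\height_{\Theta}\bigl(P_*^*-\infty+\frac{1}{2g-2}(H_3+H_4)\bigr)$ (namely $8\log p$, resp.\ $-12\log p$, up to $(1+o(1))$), combines it through Faltings--Hriljac and the adjunction formula with Ullmo's asymptotic $\omega^2\sim 3g\log p$ and with an explicit computation of the vertical correction divisors $\Phi_\omega(P_*^*)$ at the two places above $p$ (the Heegner points being ordinary, swapped by $w_p$, and specializing to $C_0$ at one place and to $C_\infty$ at the conjugate one), deduces first $[P_*^*,P_*^*]_{\mu_0}=O(\log p)$ and then $[P_*^*,\infty]_{\mu_0}=O(\log p)$, whence $c_3,c_4=O(\log p)$. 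Without a substitute for this argument, your proposal establishes the decomposition but not the quantitative bound on $c_3$ and $c_4$ asserted in the statement.
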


\medskip

\begin{proof}
Many parts of those statements are deduced from~\cite{MU98}, Section~6, and results of Edixhoven et al. from~\cite{EJ11c}. See also~\cite{Me11b}, Section~4.4.

  We start by estimating~$c_\omega$. By Arakelov's adjunction formula, 
\begin{eqnarray}
\label{infinityintersection}
-[\infty , \infty]_{\mu_0} =[\infty , \omega]_{\mu_0} & = 
 (2g-2)[\infty ,\infty ]_{\mu_0}   + [K:\Q ] c_\omega   \nonumber
\end{eqnarray}
because of the orthogonality of the decomposition~(\ref{HHodge}). Lemma~\ref{0-infty} therefore implies
$$
0 \leq c_\omega =\frac{(1-2g)}{[K:\Q ] } [\infty ,\infty ]_{\mu_0}  =O(\log p) .
$$

\medskip

The computations of the $J_0 (p)$-part $\omega^0 := -(H_3^0 +H_4^0 )$ follows from the Hurwitz formula, as explained in~\cite{MU98}, paragraph~6, 
p.~670. One indeed checks that, on the generic fiber~$X_0 (p)_{/\Q} ={\cal X}_0 (p) \times_\Z \Q$, the canonical divisor is linearly equivalent to 
$$
(2g-2)\infty - \left( \frac{1}{2} {\sum_{j (P) =e^{i\pi /2} }}' (P-\infty ) +\frac{2}{3} {\sum_{j (P) =e^{2i\pi /3} }}' (P-\infty ) \right) 
$$ 
where the sums $\sum'$ are here restricted to points~$P$ at which~$X_0 (p)\to X (1)$ is unramified (these are the Heegner points alluded to in our 
statement). It follows from the modular interpretation that in each of those sums there are two Heegner points  (if any), which are then ordinary at~$p$ 
(recall we assume~$p>13>3$). This proves that the $J_0 (p) (K )\otimes_\Z \R$-part of $\omega$ is indeed $-(H_4^0 +H_3^0 )$ with $H_4^0 =H_4  +[K:\Q ]
c_4 X_\infty$ and $H_3^0 =H_3 +[K:\Q ] c_3 X_\infty$ for some real numbers~$c_3$ and $c_4$. 
(Note that, as Heegner points are preserved by the Atkin-Lehner involution (\cite{Gr84}, paragraph~5, p.~90), their specializations 
above~$p$ share themselves between the two components~$C_0$ and~$C_\infty$ of~${\cal X}_0 (p)_{/\F_p}$, so that $2H_3^0 ={\sum_{j (P) =e^{i\pi /2} }}' 
(P-\infty )$ and $\frac{2}{3} H_4^0 = {\sum_{j (P) =e^{2i\pi /3} }}' (P-\infty )$ belong to the neutral component $J_0 (p)^0 ({\cal O}_K )$.) The estimates on $c_3$ 
and $c_4$ will be justified at  the end of the proof.      

\medskip

The bad fibers divisors~$\Phi_{\omega ,v} :=\sum_{n,m} \omega_{n,m} [C_{n,m} ]$ can be computed with the ``vertical" adjunction formula (\cite{Li02} 
Chapter~9, Theorem~1.37) as in~\cite{Me11b}, Lemma~4.22. Indeed, for each irreducible component~$C$ in the~$v$-fiber having genus~$0$, one has
$$
[C,C+\omega ] =-2 \log (\# k(v)). 
$$
If~${\cal M}$ is the intersection matrix displayed in~(\ref{M}), and $\delta_{*,*}$ is Kronecker's symbol, we therefore have 
\begin{eqnarray}
\label{PhiOmegaBost}
C\cdot {\cal M} \cdot \Phi_{\omega ,v}  = -2 -\frac{1}{ \log (\# k(v))} [C,C] -(2g-2)\delta_{C,C_\infty } =
\left\{
\begin{array}{lcl}
0 &  {\mathrm{\ if\ }} & C\neq C_\infty ,C_0 \\
s-2g &    {\mathrm{\ if\ }}  & C= C_\infty  \\
s-2  & {\mathrm{\ if\ }} & C= C_0 
\end{array}
\right.
\end{eqnarray}
that is, as~$s=g+1$:
$$
{\cal M}\cdot \Phi_{\omega ,v} = (g-1) (-1, 0, \cdots ,0,1)^t .
$$
That equation is~(\ref{Phi_0}) (up to a multiplicative scalar), which has been solved in the first case of Lemma~\ref{VerticalComput}. Therefore
\begin{eqnarray}
\label{PhiPPhiOmega}
\Phi_{\omega ,v} =(g-1)\Phi_{C_0}, {\mathrm{\ that\ is:\ }}  \omega_{n,m} = \frac{12(1-g)}{(p-1) } \cdot \frac{m}{w_n  }  .
\end{eqnarray} 
As noted in~Remark~\ref{CoeffPhi} and using~(\ref{genussize}), this implies the coefficients~$\omega_{n,m}$ of $\Phi_{\omega ,v}$ satisfy
$$
0\geq \omega_{n,m} \geq \frac{12(1-g)}{p-1} e_v >-e_v . 
$$

\medskip

We finally estimate the intersection products
$$
c_3 =\frac{-1}{[K:\Q ]} [\infty ,H_3 ]_{\mu_0}
\hspace{1cm} {\mathrm{and}} \hspace{1cm} c_4 =\frac{-1}{[K:\Q ]} [\infty ,H_4 ]_{\mu_0} .
$$
By the
adjunction formula and Hriljac-Faltings' theorem (\cite{Ch86}, Theorem~5.1 (ii)) we compute
that for any~$P\in X_0 (p)(K)$, 
\begin{eqnarray}
-2[K:\Q ]  \height_{\Theta}  (P-\frac{1}{2g-2} \omega ) & = & [P-\frac{1}{2g-2} \omega -\Phi_\omega (P) ,P-\frac{1}{2g-2} \omega -\Phi_\omega 
(P) ]_{\mu_0}     \nonumber \\
   & = & \frac{1}{(2g-2)^2}  [\omega, \omega ]_{\mu_0} + \frac{g}{g-1} [P,P]_{\mu_0} -\Phi_\omega (P)^2 \nonumber  
\end{eqnarray}
where here~$\Phi_\omega (P)$ is a vertical divisor supported at bad fibers such that
\begin{eqnarray}
\label{PhiP}
[C,P-\frac{1}{2g-2} \omega -\Phi_\omega (P)]=0
\end{eqnarray} 
for any irreducible component $C$ of any bad fiber of ${\cal X}_0 (p)_{/{\cal O}_K}$. Hence
\begin{eqnarray}
\label{Szpiro}
\frac{1}{(2g-2)^2} \omega^2 + \frac{g}{g-1} [P,P]_{\mu_0}  -\Phi_\omega (P)^2  =  -2[K:\Q ] \height_{\Theta}  ((P-\infty ) +\frac{1}{2g-2} (H_3 +H_4 ))  . 
\end{eqnarray}

  We specialize to the case when~$P=P_*^*$ (where the upper star is $1$ or $2$ and the lower star is $4$ or $3$) is one of the Heegner 
points occurring in~$H_4$ or~$H_3$, respectively. We replace for now the base field $K$ by $F :=\Q (P_*^* ) =\Q (\sqrt{-1})$ (respectively,~$\Q(\sqrt{-3})$). 
The right-hand of~(\ref{Szpiro}), if non-zero, is then
\begin{eqnarray}
\label{michelullmo}
-8\log (p)(1+o(1)) \hspace{0.5cm} {\mathrm{or}}  \hspace{0.5cm} -12\log (p)(1+o(1)),  \hspace{0.5cm} {\mathrm{respectively,}} 
\end{eqnarray}
by~\cite{MU98}, p.~673. If those Heegner points occur we know that $p$ splits in $F$, so there are two bad primes $v$ and $v'$ on ${\cal O}_F$ 
(therefore two bad fibers on ${\cal X}_0 (p)_{/{\cal O}_F}$ and two~$G_v$, $G_{v'}$) to take into account.  We compute~$\Phi_\omega (P_*^* )$ 
and $\Phi_\omega (P_*^* )^2$. As mentioned at the beginning of the proof, $P_*^* $ specializes to the component~$C_0$ at a place, say~$v$, of $F$ 
above $p$, and to~$C_\infty$ at the conjugate place~$v'$. Conditions~(\ref{PhiP}) therefore give that, for any irreducible component~$C$ of 
the fiber at~$v$, 
$$
0= [C,P_*^* -\frac{1}{2g-2} \omega -\Phi_\omega (P_*^* )_v ]=  [C, 0- \infty  -\frac{1}{2g-2} {\Phi_{\omega ,v}} -\Phi_\omega (P_*^* )_v ] 
$$
and using Lemma~\ref{VerticalComput}, Lemma~\ref{0-infty} and~(\ref{PhiPPhiOmega}) one obtains
$$
\Phi_\omega (P_*^* )_v  = -\frac{1}{2g-2} {\Phi_{\omega ,v}}  +\Phi_{C_0 ,v} =\frac{1}{2}  \Phi_{C_0 ,v} 
$$
whereas, at~$v'$:
$$
\Phi_\omega (P_*^* )_{v'}  =  -\frac{1}{2g-2} {\Phi_{\omega ,v'}} = -\frac{1}{2}  \Phi_{C_0 ,v'} .
$$
Using Lemmas~\ref{VerticalComput} and~\ref{0-infty} again we therefore have
\begin{eqnarray}
\label{PhiPsquare}
\Phi_\omega (P_*^* )^2 =\sum_{w\vert p} \frac{1}{4}  \Phi_{C_0 ,w}^2 =\sum_{w\vert p} \frac{1}{4}    [\Phi_{C_0 ,w}   ,0-\infty ] =
\frac{1}{2}  a_{0} \log p =-\frac{6\log (p)}{p-1} .
\end{eqnarray}
As for the self-intersection of $\omega$ one knows from~\cite{Ul00}, Introduction, that 
$$
\omega_{{\cal X}_0 (p)_{/\Z}}^2 =3 g\log (p)(1+o(1)).
$$ 
As the quantity~$\frac{1}{[F:K]} [\omega ]^2$ is known to be
independent from the number field extension~$F/K$, the dualizing sheaf $\omega_{{\cal X}_0 
(p)_{/{\cal O}_F}}$ of ${\cal X}_0 (p)$ over ${\cal O}_F$ 
(instead of~$\Z$) satisfies $\omega^2 = 6g\log (p)(1+o(1))$. 
Summing-up, equation~(\ref{Szpiro}) implies  that
\begin{eqnarray}
\label{PP}
[P_*^* ,P_*^* ]_{\mu_0}  =O (\log (p))
\end{eqnarray}
for each Heegner point~$P_*^*$. Now, on the other hand, the vertical divisor~$\Phi_{P^*_*}$ in the sense of~(\ref{PhiPP}) and Lemma~\ref{VerticalComput}
is $\Phi_{P^*_*} =\Phi_{C_0 ,v}$ for the place $v$ of $F$ where $P_*^*$ specializes on $C_0$ and not $C_\infty$. Therefore
\begin{eqnarray}
\label{overF}
-4\height_{\Theta} (P_*^* -\infty )  &=& [P_*^* -\infty -\Phi_{P^*_*} ,P_*^* -\infty -\Phi_{P^*_*} ]_{\mu_0}       \nonumber \\
 & = &- 2[P_*^* , \infty ]_{\mu_0}  +[P_*^*  , P_*^* ]_{\mu_0}  + [\infty ,\infty ]_{\mu_0}   -(\Phi_{P^*_*} )^2 
\end{eqnarray}
whence, using~(\ref{michelullmo}), (\ref{PhiPsquare}), (\ref{PP}) and Lemma~\ref{0-infty}(b):
$$
[P_*^* , \infty ]_{\mu_0}  =\frac{1}{2} \left(  [P_*^*  , P_*^* ]_{\mu_0}  + [\infty ,\infty ]_{\mu_0}   -(\Phi_{C_0 ,v} )^2   + 4\height_{\Theta} (P_*^* -\infty )  \right) 
=  O(\log p).
$$
Putting everything together  and using Lemma~\ref{0-infty} once more we conclude that
\begin{eqnarray}
\label{HiHi}
c_4  = - \frac{1}{[K:\Q ]} [\infty ,H_4 ]_{\mu_0} = \frac{1}{2[K:\Q ]} \left( -[\infty , P_4^1 +P_4^2 ]_{\mu_0}  +[\infty ,0 +\infty ]_{\mu_0}  \right) = 
    O(\log p )
\end{eqnarray}
and similarly for $c_3$. (Note that the Arakelov intersection products, in the computations around~(\ref{overF}), were performed over $F=\Q (P_*^* )$ and not $K$,
although we did not indicate this in the notations in order to keep it from becoming too heavy. We however want quantities over~$K$ for the statement 
of the theorem, so 
we need considering Arakelov products over $K$ in~(\ref{HiHi}) above.)   \hspace{3cm} $\Box$
\end{proof}

\bigskip

\begin{rema}
\label{symmetricOmega}
{\rm It may be convenient to write, with notations as in~(\ref{decomposeomegaequation}), a more symmetric $\omega$ as
\begin{eqnarray}
\label{decomposesymmetricomega}
\omega = (g-1) (\infty  +0) +(-{H}_4^0 -{H}_3^0 ) + [K:\Q ] c_\omega X_\infty 
\end{eqnarray}
which yields an element with no vertical component at bad fibers.}  
\end{rema}

\section{$j$-height and {$\Theta$-height}}
\label{jETtheta}
In this section we compare two natural heights on $X_0 (p)(\overline{\Q})$, namely the $j$-height and the 
one induced from the N\'eron-Tate $\Theta$-height on $J_0 (p)(\overline{\Q})$. We start with an explicit 
description of the latter, for which it is actually convenient to use a bit of Zhang's language about 
``adelic metrics'' (see~\cite{Zh93}) which, in our modular setting, has a very concrete form. 

  Using notations and results from Section~\ref{minimalregular} we therefore consider the limit, as $e_v$ 
goes to $\infty$, of the dual graph of the special fiber of ${\cal X}_0 (p)$ at a place $v$ of a $p$-adic local field 
with ramification index $e_v$ at $p$ (see Figure~\ref{figure}). Here we normalize the length of the $s=g+1$ 
edges from $C_\infty$ to $C_0$ to be $1$, so that the vertex $C_{n,m}$ corresponds to the point of the 
$n^{\mathrm{th}}$ edge with distance $\frac{m}{e_v w_n}$ from the origin $C_\infty$. Now associate to any 
edge $n\in \{ 1,\cdots ,s\}$ the quadratic polynomial function
\begin{eqnarray}
\label{ZhangMetric1}
g_n (x) \colon [0,1] \to \R , \ x\mapsto \frac{1}{2} x\left( (w_n -\frac{12}{(p-1)} ) x -w_n - 12\frac{(g-1)}{(p-1)}\right)   .
\end{eqnarray}
For $K$ any number field, $P$ in $X_0 (p)(K)$, and $v$ a place of $K$ whose ramification degree and residual degree are still denoted by $e_v$ and $f_v$ respectively,
let 
\begin{eqnarray}
\label{ZhangMetric2}
G(P (K_v  ))= e_v f_v  \log (p) \cdot g_n (C_{P(k(v))} )
\end{eqnarray}
where $C_{P(k(v))}$ is the component to which the specialization of $P$ belongs at $v$, identified to a point of the $n^{\mathrm{th}}$ edge where it lives.
\begin{theo}  
\label{ThetaZhang}
For any number field $K$, there is an element 
\begin{eqnarray}
\label{L'Omega_Theta0} 
\tilde{\omega}_{\Theta ,K} =\left( g\cdot \infty   +\Phi_{\Theta ,K} + c_{\Theta ,K} X_\infty \right) 
\end{eqnarray}
of $\widehat{CH}(p)_{\R, \mu_0}^{\mathrm{num}}$ such that, for any $P\in X_0 (p)(K)$ one has, with notations as in Proposition~\ref{decomposeomega}, 
\begin{eqnarray}
\label{L'Omega_Theta} 
\height_\Theta (P-\infty +\frac{1}{2} \omega^0 ) = \frac{1}{[K:\Q ]} [P,\tilde{\omega}_{\Theta ,K} ]_{\mu_0}
\end{eqnarray}
and the terms of~(\ref{L'Omega_Theta0}) satisfy:
\begin{eqnarray}
\label{ToutesEstimees} 
0  \geq [P,\Phi_{\Theta ,K} ]\geq  -2 [K:\Q ]\log (p)  \hspace{0.5cm}   {\mathrm{and}} \hspace{0.5cm}   c_{\Theta ,K} = [K:\Q ]O(\log p ) .
\end{eqnarray}

  Passing to the limit on all number fields, the height induced on $X_0 (p)(\overline{\Q})$ by pulling-back N\'eron-Tate's $\Theta$-height on $J_0 (p)(\overline{\Q})$ via
the embedding $P\mapsto P-\infty +\frac{1}{2} \omega^0$ can be written as: 
\begin{eqnarray}
\label{adelicZhang}
\height_\Theta (P-\infty +\frac{1}{2} \omega^0 ) & = & \frac{1}{[K :\Q ]} \left( g[P,\infty ]_{\mu_0} +\sum_{v\in M_{K } , v\vert p} G(P (K_v  )) +c_{\Theta ,K}  \right)  
\end{eqnarray}
where Zhang's Green function $G$ at bad fibers is defined in~(\ref{ZhangMetric1}) and~(\ref{ZhangMetric2}).  

  In any case one has that the height satisfies 
\begin{eqnarray}
\label{lesestimees} 
\height_{\Theta} (P-\infty +\frac{\omega^0}{2} ) =\frac{1}{[K :\Q ]} [P,g\cdot \infty ]_{\mu_0} +O(\log p )   .
\end{eqnarray}
\end{theo}

\begin{proof}
We prove~(\ref{L'Omega_Theta}) and~(\ref{ToutesEstimees}); from there reformulation~(\ref{adelicZhang}) and~(\ref{lesestimees}) are straightforward. 

Recall~${\cal X}_0 (p)$ denotes the minimal regular model of~$X_0 (p)$ on~${\mathrm{Spec}} ({\cal O}_K )$, 
that~${\cal J}_0 (p)$ is the N\'eron model of $J_0 (p)$ on the same base, and ${\cal J}_0 (p)^0$ stands for its neutral component. 
Let~$\delta$ be an element of $J_0 (p)(K)$, seen as a degree $0$ divisor on $X_0 (p)$. Up to making a base extension we can assume $\delta$ 
is linearly equivalent to a sum of points in  $X_0 (p)(K)$. We shall denote by $\tilde{\delta} =\delta +\Phi_\delta$ (for $\Phi_\delta$ some vertical divisor on~${\cal X}_0 (p)$, 
with multiplicity $0$ on the component  containing $\infty$, following our running conventions) the associated element of the neutral component ${\cal J}_0 (p)^0 ({\cal O}_K )$ (that is, 
the one whose associated divisor has degree zero on each irreducible component, in any fiber, of ${\cal X}_0 (p)$, and therefore defines a point 
of ${\cal J}_0 (p)^0 ({\cal O}_K )$). For any point $P$ in $X_0 (p)(K)\hookrightarrow  {\cal X}_0 (p) ({\cal O}_K )$ let similarly~$\Phi_P$ be the vertical 
divisor on~${\cal X}_0 (p)$, with support on the bad fibers, such that~$(P-\infty -\Phi_P )$ has divisor class belonging to the neutral component ${\cal J}_0 (p)^0 ({\cal O}_K )$
and, again,~$\Phi_P$ has everywhere trivial~$\infty$-component, see~(\ref{PhiPP}). Recall we can compute~$\Phi_P$ explicitly by Lemma~\ref{VerticalComput}. 
We write~$\Phi_P =\sum_{v\in M_K ,v| p} \sum_{C_v}  a_{C_v} [C_v ]$ where the sum is taken on 
irreducible components $C_v$ of vertical bad fibers of~${\cal X}_0 (p)$.  Using notations of Lemma~\ref{VerticalComput}~{\em (b)} we also define the following 
new vertical divisor at bad fibers:
\begin{eqnarray}
\label{PhiTheta}
\Phi_{\vartheta ,K} :=  \sum_{v\in M_K ,v| p} \sum_{Q_v} a_{C_{Q_v}} C_{Q_v}   = \sum_{v| p} \sum_{(n_0 ,m_0 )} a_{n_0 ,m_0}^v C_{n_0 ,m_0} 
\end{eqnarray}
so that
$$
a_{n_0 ,m_0}^v  =  \left(  \frac{m_0}{w_{n_0} e_v} (1-\frac{12}{(p-1) w_{n_0} }  )  -1 \right) \cdot m_0 \mathrm{.}
$$
Our very definitions imply
\begin{eqnarray}
\label{PhiP-PP}
\Phi_P^2 =[P,\Phi_P ]=[P,\Phi_{\vartheta ,K} ] 
\end{eqnarray}
for any~$P \in X_0 (p)(K)$. Using Faltings' Hodge index theorem we can write the N\'eron-Tate height~$\height_{\Theta} (P-\infty +\delta )$ as:
\begin{eqnarray}
\label{ThetaHeight}
\height_{\Theta} (P-\infty +\delta ) & =& \frac{-1}{2 [K:\Q ]} [P-\infty +\tilde{\delta} -\Phi_P , P-\infty +\tilde{\delta} -\Phi_P ]_{\mu_0}  \nonumber \\
  & = &  \frac{1}{2 [K:\Q ]} ( [P,\omega +2\infty -2\tilde{\delta} ]_{\mu_0}  +2[P,\Phi_P ]_{\mu_0}   -[\Phi_P ,\Phi_P ]_{\mu_0}    \nonumber \\
  & & + [\tilde{\delta} , 2\infty  -\tilde{\delta} ]_{\mu_0}    -[\infty ,\infty ]_{\mu_0}  ) \nonumber \\     
  & = &  \frac{1}{2 [K:\Q ]} ( [P,\omega +2\infty -2\tilde{\delta} +\Phi_{\vartheta ,K} ]_{\mu_0}  + [\tilde{\delta} , 2\infty -\tilde{\delta}  ]_{\mu_0}    -[\infty ,\infty ]_{\mu_0} ) \nonumber \\     
  & = &  \frac{1}{ [K:\Q ]} [P,\tilde{\omega}_{\delta} ]_{\mu_0}
\end{eqnarray}
with
\begin{eqnarray}
\label{ThetaDivisor}
\tilde{\omega}_{\delta} :=\left( \frac{1}{2} (\omega +\Phi_{\vartheta ,K} )+\infty  -\tilde{\delta} \right) +c_{\delta } X_\infty
\end{eqnarray}
for~$X_\infty$ some fixed archimedean fiber of~${\cal X}_0 (p)$ and~$c_{\delta }$
is the real number
\begin{eqnarray}
\label{OmegaInfiniteComponent}
c_{\delta } =  \frac{1}{2} \left( -[\infty ,\infty ]_{\mu_0} +[\tilde{\delta} , 2\infty -\tilde{\delta}   ]_{\mu_0}  \right) .
\end{eqnarray}

      Note that~$\tilde{\omega}_{\delta}$ does not depend on $P$ (as $\Phi_{\vartheta ,K}$ was  introduced to that aim).   
    
\medskip    
    
       Let us now take~$\delta = \omega^0 /2 =-(H_3 +H_4  )/2 \in \frac{1}{12} \cdot J_0 (p)^0 (\Q )$, as defined in~Proposition~\ref{decomposeomega}. (This is 
Riemann's characteristic (the ``$\kappa$'' of~\cite{HS00}, p.~138 for instance, that is the generic fiber of the $J_0 (p) (\Q )\otimes \R$-part of~$\omega$ in the 
decomposition~(\ref{decomposeomegaequation}).) Set $\Phi_{\Theta ,K} :=\frac{1}{2} (\Phi_\omega +\Phi_{\vartheta ,K} ) $.
Then
\begin{eqnarray}
\label{ThetaHeight2}
\tilde{\omega}_{\Theta} :=\tilde{\omega}_{\delta} =\left( g\cdot \infty   +\Phi_{\Theta ,K}  + c_{\Theta ,K} X_\infty \right)
\end{eqnarray}
for~$c_{\Theta ,K}$ which, still using notations of Proposition~\ref{decomposeomega} and its proof, is explicitly given by:
\begin{eqnarray}
\frac{1}{[K:\Q ]} c_{\Theta ,K} &= &\frac{1}{2} \left( c_\omega -c_4 -c_3 +\frac{1}{2} \height_{\Theta} (H_3 +H_4 )   - \frac{1}{[K:\Q ]}  ([\infty ]_{\mu_0}^2  +[\infty ,H_3 +H_4 ]_{\mu_0} ) \right)  \nonumber \\
 & = & \frac{1}{2} \left( c_\omega - \frac{1}{[K:\Q ]} [\infty ]_{\mu_0}^2 +\frac{1}{2} \height_{\Theta} (H_3 +H_4 )   \right) .  \nonumber
\end{eqnarray}

   As in the proof of Proposition~\ref{decomposeomega} we invoke p.~673 of \cite{MU98} to assert~$\height_{\Theta}  (H_3 +H_4 )=O(\log (p) )$. We 
moreover know from the same Proposition and from Lemma~\ref{0-infty} that both~$\vert c_\omega \vert = O(\log p )$ and $[\infty ,\infty ]_{\mu_0} = [K:\Q ] O(\log p/p )$, 
so that
\begin{eqnarray}
\label{c_Theta}
c_{\Theta ,K} = [K:\Q ]O(\log p ). 
\end{eqnarray}

The contribution of $\Phi_{\Theta ,K} $ is controlled by Lemma~$\ref{VerticalComput}$ and Remark~\ref{CoeffPhi}: on one hand,
\begin{eqnarray}
\label{Phi^2}
0  \geq [P,\Phi_{\vartheta ,K} ]  = [P,\Phi_P ]  = \sum_{v\in M_K ,v| p} a_{C_P ,v} \log (\# k_v )    \geq   \sum_{v\in M_K ,v| p}  -3e_v \log (p^{f_v} ) \nonumber \\
             \geq  -3 [K:\Q ]\log (p)     
\end{eqnarray}
On the other hand, by 
(\ref{sizePhiOmega}), the coefficients of the vertical components $\Phi_{\omega ,v}$ satisfy $0\geq \omega_{n,m} \geq -e_v$, so 
writing~$\omega_{n_P ,m_P ,v}$ for the coefficient in~$\Phi_{\omega ,v}$ of the component containing~$P(k(v))$ we have:
\begin{eqnarray}
\label{Phi_omega} 
0  \geq  [P,\Phi_\omega ]  = \sum_{v\vert p} \omega_{n_P  ,m_P ,v} \log (\# k(v)) \geq  \sum_{v\vert p} -e_v \log (p^{f_v}  ) =-[K:\Q ]\log (p) .
\end{eqnarray}
Putting  (\ref{c_Theta}), (\ref{Phi^2}) and (\ref{Phi_omega}) together completes the proof of~(\ref{L'Omega_Theta}) and~(\ref{ToutesEstimees}) and the proof. $\hspace{1cm} \Box$

\end{proof}

\medskip
 
\begin{rema}
{\rm Estimates on the Green-Zhang function on $X_0 (p)$ as in the above theorem will be extended below 
to the N\'eron model over $\overline{\Z}$ of the whole jacobian $J_0 (p)$, 
see Proposition~\ref{padicmetric0}.}
\end{rema}

\begin{rema}
\label{symmetricHodge}
{\rm As already noticed, the involution $w_p$ acts as an isometry (actually, an orthogonal symmetry) with respect to the 
quadratic form $\height_\Theta$ on $J_0 (p)(K)\otimes_\Z \R$. Indeed $w_p$ acts as multiplication by $\pm 1$ on each factor of Shimura's decomposition
up to isogeny:
$$
J_0 (p) \sim \prod_{f\in {G_\Q} \cdot S_2 (\Gamma_0 (p))^{\mathrm{new}}} J_f 
$$  
whose factors are $\height_\Theta$-orthogonal subspaces. (See also~\cite{Me08}, Corollaire~4.3, or \cite{Me11b},
Theorem~4.5 (3).) As $w_p (\omega^0 )=\omega^0$ (see the proof of Proposition~\ref{decomposeomega}) this implies 
$$
\height_\Theta (P-\infty +\frac{1}{2} \omega^0 ) =\height_\Theta (w_p (P-\infty +\frac{1}{2} \omega^0 )) = \height_\Theta (w_p (P)-0 +\frac{1}{2} \omega^0 )=
\height_\Theta (w_p (P)-\infty +\frac{1}{2} \omega^0 )
$$
using once more that $(0)-(\infty )$ is torsion, so that
\begin{eqnarray}
\label{w-}
[P,\tilde{\omega}_{\Theta} ]_{\mu_0} =[w_p (P),\tilde{\omega}_{\Theta} ]_{\mu_0} = [P,w_p^* (\tilde{\omega}_{\Theta} )]_{w_p^* (\mu_0 )} 
=[P,w_p^* (\tilde{\omega}_{\Theta} )]_{\mu_0} 
\end{eqnarray}
(see Remark~\ref{wpinvariance}). This suggests it could sometimes be convenient to write $\tilde{\omega}_{\Theta}$ in a $w_p$-eigenbasis of $\widehat{CH}(p)_{\R, 
\mu}^{\mathrm{num}}$ instead of that of Theorem~\ref{Hodge}, for instance
\begin{eqnarray}
\label{ouioui}
\widehat{CH}(p)_{\R, \mu_0}^{\mathrm{num}} =\R \cdot \frac{1}{2} (0+\infty ) \oplus \R \cdot  X_\infty \oplus_{v\vert p} \Gamma_v \oplus \left( J_0 (p) (K)\otimes  \R \right)
\end{eqnarray}
where now the $\Gamma_v$ decompose as the direct sum of eigenspaces $\Gamma_v^{w_p =-1}$ and $\Gamma_v^{w_p =+1}$, with bases: 
\begin{eqnarray}
\label{withbasis}
\{C_{n,m}^- :=C_{n,m} -w_p (C_{n, m} ) \}_{{1\leq n\leq s} \atop {0\leq m\leq ew_n /2}} {\mathrm{\ and}\ }  \{C_{n,m}^+ :=C_{n,m} +w_p (C_{n, m} )-C_0 -
C_\infty \}_{{1\leq n\leq s} \atop {1\leq m\leq ew_n /2}}
\end{eqnarray}
respectively. Using Lemma~\ref{0-infty} and Proposition~\ref{decomposeomega}, a lengthy but easy computation allows one to check that 
$$
\tilde{\omega}_{\Theta} =g\cdot  \frac{1}{2} (0+\infty ) + \Phi_\Theta^+ +\gamma_\Theta X_\infty
$$ 
where $\Phi_{\Theta}^+$ is an explicit vertical divisor above~$p$ with $w_p^* (\Phi_\Theta^+ )=\Phi_\Theta^+$, so that indeed
$$
w_p^* (\tilde{\omega}_\Theta ) =\tilde{\omega}_\Theta 
$$
thus recovering (\ref{w-}). 

  Consider for instance the case of ${\cal X}_0 (p)$ over $\Z$, for $p\equiv 1$ mod $12$ (that is, ${\cal X}_0 (p)_{/\Z}$ is regular, so that there is no 
need to blow-up singular points of width larger than $1$). Here $\Gamma_v =\Gamma_v^- =\R \cdot C_0^- =\R \cdot ([C_\infty ]-[C_0 ])$) and one 
readily checks that 
\begin{eqnarray}
\label{p=1mod12}
{\tilde{\omega}}_\Theta =\frac{g}{2} (0+\infty ) +\gamma_\Theta X_\infty 
\end{eqnarray}
that is, there is no $\Gamma_v$-component at all in that case. Evaluating $\height_\Theta (\frac{1}{2} \omega^0 )$ as in the proof of 
Proposition~\ref{decomposeomega} and using Lemma~\ref{0-infty},
$$
\gamma_\Theta =-\frac{g}{2} [\infty ,0+\infty ]_{\mu_0} +\height_\Theta (\frac{1}{2} \omega^0 ) =gO(\log p/p) +O(\log p) =O(\log p).
$$ 
}
\end{rema}

\bigskip

   We then turn to the $j$-height, first making a comparison of $\height_j$ with the ``degree component" (in the sense of 
Theorem~\ref{Hodge}) of the hermitian sheaf~$\omega$. 

\begin{propo}
\label{jHeight}
Let~$\height_j$ be Weil's $j$-height on~$X_0 (p)$ as defined in in Section~\ref{ModularCurves}, and let~$\mu_0$ and $\mu_e$ be the $(1,1)$-forms defined in~(\ref{mu}) and (\ref{mu_e}). 
Recall $\sup_{X_0 (p) (\C )} g_{\mu}$ stands for the upper bound for {\it all} Green functions $g_{\mu ,a}$ relative to some point $a$ of $X_0 (p)(\C )$
and to the measure $\mu$. 

  If~$p$ is a prime number, $K$ is a number field, and~$P$ belongs to~$X_0 (p)(K)$, then
\begin{eqnarray}
\label{heightJ}
\height_j (P)      &  \leq & (p+1)  \left(  \frac{1}{[K:\Q ]} [P, \infty ]_{\mu_0} +  \sup_{X_0 (p)(\C )} g_{\mu_0}  +O( 1) \right) \nonumber \\
                                         & \leq &  \frac{(p+1)}{[K:\Q ]} [P, \infty ]_{\mu_0} +O(p^2 \log p)
 \end{eqnarray}
and similarly
\begin{eqnarray}
\label{heightJe}
\height_j (P)      &  \leq & (p+1)  \left(  \frac{1}{[K:\Q ]} [P, \infty ]_{\mu_e} +  \sup_{X_0 (p)(\C )} g_{\mu_e}  +O( 1) \right) \nonumber \\
                                         & \leq &  \frac{(p+1)}{[K:\Q ]} [P, \infty ]_{\mu_e} +O( p^3 ).
\end{eqnarray}
\end{propo}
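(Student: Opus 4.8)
The plan is to compare the naive $j$-height of a point $P \in X_0(p)(K)$ with an Arakelov-theoretic intersection number, by realizing $\height_j$ as (a multiple of) the height attached to the line bundle $J^*(\OO_{\PPP^1}(1))$, whose associated divisor $D$ is linearly equivalent to $\sum_c e_c \cdot c$, i.e. essentially the divisor of poles of $j$, supported on cusps with multiplicities the ramification indices. Since the degree of $D$ on the generic fiber is $[\PSL_2(\Z):\Gamma_0(p)] = p+1$, the class of this Weil height in $\NS(X_0(p)) \cong \Z$ is $p+1$; this explains the factor $(p+1)/[K:\Q]$ in the statement. Concretely I would pick the arithmetic divisor $\overline{D}$ on $\calX_0(p)$ extending $D$, metrized with respect to $\mu_0$ (resp. $\mu_e$), and write $\height_{\overline{D}}(P) = [P, \overline{D}]$ up to the normalization error; then decompose $\overline{D}$ in the Hodge decomposition~(\ref{HHodge}).

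First I would reduce from the full cuspidal divisor $D \sim \sum_c e_c\, c$ on $X_0(p)$ to the two cusps $0$ and $\infty$. For prime level $p$ there are only two cusps, $0$ and $\infty$, with ramification indices $1$ and $p$ respectively (or vice versa depending on conventions), so $D$ is already of the form $a\cdot(0) + b\cdot(\infty)$ with $a+b = p+1$. Using Lemma~\ref{0-infty}~(a), the cuspidal divisor $(0)-(\infty)$ is numerically equivalent to the explicit vertical-plus-archimedean divisor $\Phi^0_{C_0}$, so $\overline{D}$ can be rewritten, up to controlled vertical and archimedean contributions, as $(p+1)(\infty)$ plus those correction terms. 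Intersecting with the section $P$ and using that $[P,\Phi]$ for vertical $\Phi$ is bounded by the coefficients of $\Phi$ (which by Remark~\ref{CoeffPhi} and Lemma~\ref{0-infty}~(b) are $O(\log p)$ times $[K:\Q]$ after summing over $v\mid p$ and weighting by $\log\#k(v)$), and that $[P, X_\infty]$ contributes through $[0,\infty]_{\mu_0} = O(\log p/p)$, I would obtain $\height_j(P) \le \frac{p+1}{[K:\Q]}\big([P,\infty]_{\mu_0} + O(\text{small})\big)$ up to the Green-function normalization discrepancy.

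Next comes the passage from the intersection pairing $[P,\overline{D}]$ to the Arakelov height $\height_{\overline D}$: these differ by the archimedean Green's function terms, and here is where the quantity $\sup_{X_0(p)(\C)} g_{\mu}$ enters. The naive Weil height at an archimedean place $v$ is $\log\max(1,|j(P)|_v)$, while the Arakelov/admissible height measures the same data against the Green's function $g_{\mu,D}(P)$; the difference between the ``true'' local height and its admissible counterpart is controlled, uniformly in $P$, by the sup of the relevant Green's function (plus $O(1)$ absolute constants coming from the fixed model of $X(1)$ and the chosen basis $(X_0,X_1)$ of $\OO_{\PPP^1}(1)$). This gives the first displayed inequality in each of~(\ref{heightJ}) and~(\ref{heightJe}). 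Finally, to get the second (cleaner) inequality I would invoke the known estimates on $\sup g_{\mu_0}$ and $\sup g_{\mu_e}$: for the Green-Arakelov measure $\mu_0$ on $X_0(p)$ one has bounds of shape $O(p^2)$ (from the work of Abbes-Ullmo-Michel / Jorgenson-Kramer, cf.\ the references cited in Section~\ref{Chow}), so that $[K:\Q]\sup g_{\mu_0}$ multiplied by the prefactor $(p+1)/[K:\Q]$ yields a term $O(p^3)$, absorbing it into $C_0 \cdot p^3$; the same for $\mu_e$ with $C_e$, using that $\mu_e$ is a sub-measure of $\mu_0$ so its Green's function enjoys comparable (if weaker) upper bounds.

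The main obstacle, I expect, is the uniform control of $\sup_{X_0(p)(\C)} g_{\mu}$ in terms of $p$ --- i.e. justifying the $O(p^2)$ sup-norm bound for the Arakelov-Green's function (and its analogue for $\mu_e$) with an explicit polynomial dependence on the level. This requires genuine analytic input about the hyperbolic geometry / spectral theory of $X_0(p)(\C)$ (comparison of $\mu_0$ with the hyperbolic measure, control of the smallest positive Laplace eigenvalue, behaviour near the cusps), rather than formal Arakelov manipulation; everything else --- the identification of $\height_j$ with a multiple of a cuspidal-divisor height, the Hodge decomposition bookkeeping, the vertical-divisor coefficient estimates from Lemma~\ref{VerticalComput} and Remark~\ref{CoeffPhi}, and the $[0,\infty]_\mu$ estimates from Lemma~\ref{0-infty} --- is comparatively routine given what has already been set up. A secondary subtlety is keeping careful track of the normalization constants (the condition~(\ref{normalizing}) versus the $g_{D_0}(\infty)=0$ normalization) so that the additive $O(1)$ and the $[P,\infty]_{\mu}$ term are correctly separated.
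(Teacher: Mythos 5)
Your overall route is the paper's route: identify $\height_j$ with the height attached to $j^*(\OO_{\PPP^1}(1))$, whose divisor is $j^*(\infty)=p(0)+(\infty)$ of degree $p+1$, compare the naive height with the Arakelov pairing $[P,j^*(\infty)]_{\mu}$ for $\mu=\mu_0,\mu_e$, use Lemma~\ref{0-infty}~(a) to replace $p(0)+(\infty)$ by $(p+1)\infty$ up to the vertical/archimedean correction $p\,\Phi^0_{C_0}$ (which contributes only $O(\log p)$), and absorb the sup-norm bound $\sup g_{\mu}=O(p^2)$ into $C_\bullet\, p^3$. However, two steps in your sketch are justified by claims that are not correct as stated. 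First, the archimedean comparison is \emph{not} controlled by ``$\sup g_{\mu}$ plus $O(1)$ absolute constants coming from the fixed model of $X(1)$ and the chosen basis''. The paper performs this comparison via Theorem~9.1.3 of~\cite{EJ11b}, and the resulting error contains, besides $(p+1)\sum_\sigma\sup g_{\mu_0}$, the term $\tfrac12\sum_\sigma\int_{X_0(p)_\sigma}\log(\vert j\vert^2+1)\,\mu_0$. This is a genuinely $p$-dependent quantity: $\mu_0$ depends on $p$ and the integrand blows up at the cusps, so one must control $\mu_0$ on a cusp neighbourhood. The paper does this using $\vert f(q)/q\vert\le 2/(1-r)^2$ near the cusps and the Petersson lower bound $\Vert f\Vert^2\ge \pi e^{-4\pi}$, obtaining a bound $(p+1)A_0$, which is exactly what justifies the $(p+1)\cdot O(1)$ in the statement. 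Without some such estimate your ``$O(1)$'' is unsubstantiated, and this is precisely where the degree-$(p+1)$ prefactor could otherwise be spoiled.

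Second, your claim that ``$\mu_e$ is a sub-measure of $\mu_0$'' is false because of the normalizations: $\mu_e$ is averaged over $\dim J_e$ forms while $\mu_0$ is averaged over $g$ forms, so pointwise one only has $\mu_e\le \frac{g}{\dim J_e}\,\mu_0$ (cf.~(\ref{CompMeasures})). To make the comparison constant absolute — and hence to run Bruin's method~\cite{Br14} and the integral estimate for $\mu_e$ — you need the unconditional lower bound $\dim J_e\ge(\tfrac14-o(1))\dim J_0(p)$ of Iwaniec--Sarnak and Kowalski--Michel--VanderKam (\ref{KM}), which the paper invokes to get $\mu_e\le 5\mu_0$ for large $p$. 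This input is indispensable (and independent of Brumer's conjecture); once these two points are supplied, the rest of your bookkeeping with Lemma~\ref{VerticalComput}, Remark~\ref{CoeffPhi} and Lemma~\ref{0-infty} matches the paper's proof.
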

\medskip
\begin{rema}
\label{CorrectOrder}
{\rm As explained in the proof below, the function $O(p^2 \log p)$ of~(\ref{heightJ}) comes from~\cite{Wi16}, Corollary~1.5, together with~\cite{Ul00}, Corollaire~1.3 for the estimate of Faltings' $\delta$
invariant for $X_0 (p)$, which imply the suprema of our functions verify:
\begin{eqnarray}
\label{CelleDeWilms}
\sup_{X_0 (p)(\C )} g_{\mu_0} \le  O(p\log p) .  
\end{eqnarray}
The function $O(p^3 )$ of~(\ref{heightJe}) in turns follows from the main result of~\cite{Br14}. Indeed this states explicitly that 
$\sup_{X_0 (p)(\C )} g_{\mu_0} \le  0.088 \cdot p^2 + 7.7 \cdot p + 1.6 \cdot 10^4$,
see~\cite{Br14}, Theorem~1.2. It follows from measures comparison (see~(\ref{CompMeasures}) below) and the 
method of P.~Bruin that this holds for~$\sup_{X_0 (p)(\C )} g_{\mu_e}$ too, so that
\begin{eqnarray}
\label{CelleDeBruin}
\sup_{X_0 (p)(\C )} g_{\mu_e} \le  O(p^2 ) .  
\end{eqnarray}
It seems that, at least in the case of $X_0 (p)$, if we plug into Bruin's method the estimates of~\cite{MU98} regarding the comparison function~$F(z)$ 
between Green-Arakelov and Poincar\'e measures, we recover 
bounds of shape $O(p\log p)$ instead of $O(p^2 )$ (see~\cite{Br14}, p.~263, and Paragraph~8 (Theorem~7.1 in particular)), and the same again
holds true for the Green function $g_{\mu_e}$. One should therefore be able to obtain the same error term $O(p^2 \log p)$ for~(\ref{heightJe}) as for~(\ref{heightJ}).

  Note that the main theorems of~\cite{JK06} and~\cite{Ar13} might even yield that the above functions $O(p^2 )$ or $O(p\log p)$ could be replaced by a uniform 
bound $O(1)$. 
}
\end{rema}
\begin{proof}
This is essentially a question of measure comparisons on $X_0 (p)(\C )$, between $j^* (\mu_{FS})$ on one hand (where $\mu_{FS}$ is the 
Fubini-Study $(1,1)$-form on $X(1)(\C )\simeq  \PPP^1 (\C )$) and the Green-Arakelov form $\mu_0$ (respectively, $\mu_e$) on the other hand. 
We adapt the main result of~\cite{EJ11b}. 

   We define first a somewhat canonical  Arakelov intersection product $[\cdot ,\cdot ]_{\mu_{FS}}$ on the projective line using $\mu_{FS}$. 
Write $\PPP^1_{/{{\cal O}_K}} ={\mathrm{Proj}} ({{\cal O}_K} [x_0 ,x_1 ]) = {\overline{\mathrm{Spec}}}^{\mathrm{Zar}}  ({{\cal O}_K} [j ])$ (with
$j=x_1 /x_0$), so that the horizontal divisor~$\infty ({{\cal O}_K} )$ is $V(x_0 )$ and, for any $P=[x_0 :x_1 ]$, let the associated Green function be
$$
g_{\mu_{FS} ,\infty } (P)= g_{\mu_{FS} ,\infty } (j(P)) =\frac{1}{2} \log \left( \frac{\vert x_0 \vert^2 }{\vert x_0 \vert^2 +\vert x_1 \vert^2} \right) 
=-\frac{1}{2} \log (1+\vert j(P)\vert^2 )
$$
at any point different from $\infty =[0:1]$. (We note in passing this ad hoc Green function does not need to
fulfill the normalization condition~(\ref{normalizing}).) Then for any $P$ in $X(1)(K)$ one easily 
checks that 
\begin{eqnarray}
\label{LeLog2}
\left\vert \height_j (P) - \frac{1}{[K:\Q ]} [j(P),\infty ]_{\mu_{FS}} \right\vert \leq \frac{1}{2} \log (2).
\end{eqnarray}  
Applying~\cite{EJ11b}, Theorem~9.1.3 and its proof to the setting described above gives, for any $P$ in $X_0 (p)(K)$,
\begin{eqnarray}
\label{ComparEdix}
[j(P),\infty ]_{\mu_{FS}} \leq [P,j^* (\infty )]_{\mu_0} +(p+1)\sum_\sigma \sup_{X_0 (p)_\sigma} g_{\mu_0} +\frac{1}{2} \sum_\sigma 
\int_{X_0 (p)_\sigma} \log (\vert j\vert^2 +1) \mu_{0} 
\end{eqnarray}
where~$\sigma$ runs through the infinite places of $K$ and $X_0 (p)_\sigma :=X_0 (p) \times_{{\cal O}_K ,\sigma} \C$. 

    We estimate the right-hand terms of~(\ref{ComparEdix}). 
As for the last integrals we recall that, on the union of disks of ray $\vert q\vert <r$ around the cusps 
(that is, on the image in $X_0 (p)(\C )$ of the open subset $D_r :=\{ z\in {\cal H}, \Im (z) > -(\log r) 
/2\pi \}$ in Poincar\'e upper-half plane~$\cal H$) for some fixed $r$ in $]0,1[$, one has
$$
\left\vert \frac{f(q)}{q} \right\vert \leq \frac{2}{(1-r)^2}  
$$ 
for any newform $f$ in $S_2 (\Gamma_0 (p))$. (See for instance~~\cite{EJ11c}, Lemma~11.3.7 and its proof.) We also know that the Petersson norm of
such an $f$ satisfies $\| f\|^2 \geq \pi e^{-4\pi }$ (\cite{EJ11c}, Lemma~11.1.2). Choose $r=1/2$ to fix ideas. On $D_{1/2}$, we have (see~(\ref{mu})):
$$
\mu_0 =\frac{i}{2 \dim (J)} \sum_{f \in B_2} \frac{f\frac{dq}{q} \wedge \overline{f\frac{dq}{q}} }{\| f\|^2 } \leq \frac{64e^{4\pi}}{\pi } \frac{i}{2} dq\wedge \overline{dq} .
$$ 
(Sharper bounds should be achievable, but the one above is good enough for our present purpose.) 
It follows that there exists some real $A$ such that, in the decomposition
\begin{eqnarray}
\label{decomposeintegral}
\int_{X_0 (p) (\C )} \log (\vert j\vert^2 +1) \mu_{0} = \int_{X_0 (p)(\C )\cap D_{1/2}} \log (\vert j\vert^2 +1)  \mu_0    +  
\int_{X_0 (p) (\C )\setminus D_{1/2}} \log (\vert j\vert^2 +1)  \mu_0   
\end{eqnarray}
the first term of the right-hand side satisfies
$$
 \int_{X_0 (p)(\C )\cap D_{1/2}} \log (\vert j\vert^2 +1)  \mu_0  \leq  \frac{64e^{4\pi}}{\pi } [\SL_2 (\Z ):\Gamma_0 (p)] 
 \int_{X (1)(\C )\cap D_{1/2}} \log (\vert j\vert^2 +1)  \frac{i}{2} dq\wedge \overline{dq} \leq (p+1) A.
$$
As for the second term, remembering that $\mu_0$ has total mass $1$ on $X_0 (p)(\C )$ we check that 
$$
\int_{X_0 (p) (\C )\setminus D_{1/2}} \log (\vert j\vert^2 +1)  \mu_0   \leq M_{1/2} :=\max_{X(1)(\C )\setminus D_{1/2}} (\log (\vert j\vert^2 +1))
$$
whence the existence of some absolute real number~$A_0$ such that
\begin{eqnarray}
\label{SecondIntegral}
\int_{X_0 (p) (\C )} \log (\vert j\vert^2 +1) \mu_{0}  \leq (p+1)A_0  .
\end{eqnarray}

\medskip

Putting this together with~(\ref{ComparEdix}) 
we obtain a constant~$C$ for which~(\ref{LeLog2}) reads
$$
\height_j (P) \leq    \frac{1}{[K:\Q ]} [P,j^* (\infty )]_{\mu_0}  +  (p+1)(\sup_{X_0 (p)(\C )} g_{\mu_0}  +A_0 ). 
$$

  With notations of Lemma~\ref{0-infty}, one further has 
\begin{eqnarray}
\label{equiv}
j^* (\infty )=p(0)+(\infty ) \equiv (p+1)\infty +p\cdot \Phi^0_{C_0}
\end{eqnarray}
as elements of~$\widehat{CH}(p)_{\R, \mu_0}^{\mathrm{num}}$. Using Lemma~\ref{0-infty} (a) we get
$$
\vert [P,\Phi_{C_0}^0 ]\vert \leq [K:\Q ] \frac{6 \log p}{p-1} 
$$
so that, with~(\ref{CelleDeWilms}),
\begin{eqnarray}
\height_j (P) &  \leq  & \frac{1}{[K:\Q ]} [P,(p+1)\infty ]_{\mu_0}  +(p+1)(\sup_{X_0 (p)(\C )} g_{\mu_0}  +A_0 ) +O(\log p) \nonumber \\
 &  \leq  & \frac{1}{[K:\Q ]} [P,(p+1)\infty ]_{\mu_0}  +C_0 \cdot  p^2 \log p \nonumber
\end{eqnarray}
which is~(\ref{heightJ}).

  The proof of~(\ref{heightJe}) proceeds along the same lines, with one more ingredient. Applying Theorem~9.1.3 of~\cite{EJ11b}
with the measure~$\mu_e$ instead of~$\mu_0$ gives the corresponding version of~(\ref{ComparEdix}). To obtain an upper bound 
for $\sup_{X_0 (p) (\C )} g_{\mu_e}$ we recall that the theorem of Kowalski, Michel and Vanderkam asserts that ${\dim (J_e )} \geq \dim (J_0 (p)) /5$ 
for large enough~$p$.
Our measure $\mu_e := \frac{1}{\dim (J_e )} \sum_{S_e}  \frac{i}{2} \frac{f\frac{dq}{q} \wedge \overline{f\frac{dq}{q}} }{\| f\|^2 }$
(see (\ref{mu_e})) therefore satisfies 
\begin{eqnarray}
\label{CompMeasures}
0\leq \mu_e \leq \frac{g}{\dim (J_e )} \mu_0 \leq 5  \mu_0 .
\end{eqnarray}
This shows that as in~(\ref{CelleDeBruin}), Bruin's theorem (\cite{Br14}, Theorem~7.1) provides a universal~$c_e$ such that 
\begin{eqnarray}
\label{MerklCe}
\sup_{X_0 (p) (\C )} g_{\mu_e}  \leq c_e \, p^2  . 
\end{eqnarray}
Using~(\ref{SecondIntegral}) we obtain: 
\begin{eqnarray}
\label{SecondIntegralMue}
\int_{X_0 (p) (\C )} \log (\vert j\vert^2 +1) \mu_{e}  \leq (p+1)A_e  . 
\end{eqnarray}
Finally, equivalence~(\ref{equiv}) remains naturally true in the Chow group $\widehat{CH}(p)_{\R, \mu_e}^{\mathrm{num}}$ 
relative to the measure $\mu_e$ instead of $\mu_0$, as remarked in Lemma~\ref{0-infty} {\em (a)}.
This completes the proof of~(\ref{heightJe}).  \hspace{9cm} $\Box$
\end{proof}

\bigskip

     We can finally relate $\height_j$ and the N\'eron-Tate height~$\height_\Theta$ relative to the $\Theta$-divisor (see~(\ref{symmetricTheta})):
\begin{theo}
\label{TheoremTheta} 
There are real numbers~$\gamma , \gamma_1$ such that the following holds. Let~$K$ be a number field and~$p$ a prime number. Let~$\omega^0 
:= - (H_4 +H_3 )$ be the $0$-component of the canonical sheaf~$\omega$ on~$X_0 (p)$ over $K$ (as in Proposition~\ref{decomposeomega} and 
Theorem~\ref{ThetaZhang}). If~$P$ is a point of~$X_0 (p)(K)$ then
\begin{eqnarray}
\label{jThetaheight}
\height_j (P)  \leq  (12 +o(1) )  \cdot \height_{\Theta} (P-\infty +\frac{1}{2} \omega^0 )  +\gamma \cdot p^2 \log p 
\end{eqnarray}
\begin{center}
and
\end{center}
\begin{eqnarray}
\label{jThetaheightinfty}
\height_j (P)  \leq  (24 +o(1) )  \cdot \height_{\Theta} (P-\infty  )  +\gamma_1 \cdot p^2 \log p .
\end{eqnarray}
\end{theo}
\begin{rema}
\label{InverseInequalityTheta-j}
{\rm Theorem~\ref{TheoremTheta} offers only one direction of inequality between $j$-height and $\Theta$-height: 
with our method of proof, it is harder to give an effective form to the reverse inequality, because of the 
metrics comparisons we use (see below). 

  Notice also that going through the above proofs using the estimate $\sup_{X_0 (p)(\C )} g_{\mu_0} =O(1)$ of~\cite{JK06} and~\cite{Ar13} (see Remark~\ref{CorrectOrder}) 
would even give an error term of shape $O(p)$ instead of $O(p^2 \log p)$ in~(\ref{jThetaheightinfty}). 

   Those results are in some sense (hopefully sharp) special cases of the main results of~\cite{Pa12}, after rewriting the $j$-function in terms of classical $\Theta$.
}
\end{rema}
\begin{proof}
Using Theorem~\ref{ThetaZhang}, (\ref{lesestimees}), Proposition~\ref{jHeight} and~(\ref{genussize}) we obtain
\begin{eqnarray}
\label{ComparisonThetaJ}
\height_j (P) & \leq &  
12 \frac{p+1}{p-13}  \height_{\Theta} (P-\infty +\frac{1}{2} \omega^0 ) +O(p^2 \log p ).   \nonumber  
\end{eqnarray}
The last estimate (\ref{jThetaheightinfty}) of the theorem comes from the fact that $\height_{\Theta}$ is a quadratic form and that
\begin{eqnarray}
\label{tailleOmega}
\height_{\Theta} (\omega^0 )=O(\log p)
\end{eqnarray}
by the results of~\cite{MU98} now many times mentioned.  $\hspace{7cm} \Box$
 
\end{proof}

\section{Height of modular curves and the various $W_d$}
\label{LaHauteurDeToutLeMonde}

We prove in this section a certain number of technical results about heights of cycles in the modular jacobian,
which will be useful in the sequel. For applications of the explicit arithmetic B\'ezout theorem displayed in next 
section (Proposition~\ref{Bezout}), we indeed first need estimates for the degree and height of the image of $X_0 (p)$, 
together with its various $d^{\mathrm{th}}$-symmetric products (usually called ``$W_d$''), within either $J_0 (p)$ or its 
quotient $J_e$, relative to the $\Theta$-polarization. (For more general considerations on this topic, we also refer to~\cite{dJ18}.)
We estimate those heights both in the normalized N\'eron-Tate 
sense and for some good (``Moret-Bailly'') projective models, to be defined shortly. 

  Let us first define the height of cycles relative to some hermitian bundle. For further details on this 
we refer to~\cite{Zh95}, or to~\cite{Ab97}, Section~2 for a more informal introduction.
\begin{defi}
\label{ZhangModels}
Let $K$ be a number field and ${\cal O}_K$ its ring of integers. Let ${\cal X}$ be an arithmetic scheme 
over ${\cal O}_K$, that is an integral scheme which is projective and flat over ${\cal O}_K$, having smooth generic
fiber $X$ over $K$. Let ${\cal F}$ be a generically ample and relatively semiample hermitian sheaf with smooth metric,
see~\cite{Zh95}, Section~5. We denote by $\hat{c}_1 ({\cal F})$ the first arithmetic Chern class 
of ${\cal F}$, and similarly by $c_1 (F)$ the first Chern class of $F$.

   Such a pair $({\cal X}, {\cal F})$ will be called a model, in the sense of Zhang, of its pull-back
$(X,F) =({\cal X}_K ,{\cal F}_K)$ to the generic fiber.    
\end{defi}

   Consider a model $({\cal X}, {\cal F})$ as in Definition~\ref{ZhangModels}, and let $Y$ be a $d$-dimensional 
subvariety of $X$. The degree of ${Y}$ with respect to ${F}$ is as usual the non-negative integer given by the $d^{\mathrm{th}}$-power 
self-intersection of ${c}_1 ( F)$ with $Y$, that is
$$
\deg_{F} ( Y) = \left( c_1 (F)^d \vert Y \right) .
$$ 
We shall sometimes also write that quantity as $\deg_{\cal F} (Y)$.

Now let ${\cal Y} \to {\cal X}$ be some ``generic resolution of singularities'' of $Y$ (that is, some good 
integral model for some desingularization of $Y$, see Section~1 of~\cite{Zh95}).
  The height of $Y$ with respect to ${\cal F}$ will similarly be the real number obtained by taking the 
the $({\dim {\cal Y}})^{\mathrm{th}}$-power self-intersection of $\hat{c}_1 ({\cal F})$ with ${\cal Y}$, 
divided by the degree of $Y$ and normalized so that:
\begin{eqnarray}
\label{lavraiehauteur}
\height_{\cal F} (Y)  =\frac{(\hat{c}_1 ({\cal F} )^{d+1} \vert {\cal Y} )}{[K:\Q ](d+1)\deg_{F} (Y)} .
\end{eqnarray}   
One can check that definition\footnote{It could have been simpler to systematically use the definition of 
height of~\cite{BGS94}, Section 3.1, which does not demand desingularization, as we do in the proof
of Proposition~\ref{Bezout} at the end of Section~\ref{ArithmeticBezout}. We could not find references 
however for Zhang's inequality (see~(\ref{Zhang})) in that setting, so we stick to the above 
definitions.} does not depend on the desingularization ${\cal Y} \to {\cal X}$.

   Instrumental to us will here be Zhang's control of heights in terms of essential minima. Recall
that the (first) essential minimum $\mu^{\mathrm{ess}}_{\cal F} (Y)$ of $Y$ is the minimum of the set
of real numbers $\mu$ such that there is a sequence of points $(x_n )$ in $Y(\overline{\Q })$ which is 
Zariski dense in $Y$ and $\height_{\cal F} (x_n )\le \mu$ for all $n$. Zhang's Theorem~(5.2) of~\cite{Zh95} then asserts that
\begin{eqnarray}
\label{Zhang}
\height_{\cal F} (Y) \leq  \mu^{\mathrm{ess}}_{\cal F} ({Y}) .
\end{eqnarray}
Note that if $\height_{\cal F} \geq 0$ on $Y(\overline{\Q })$ one also knows from~\cite{Zh95}, Theorem~5.2 the reverse inequality
\begin{eqnarray}
\label{ZhangBis}
\height_{\cal F} (Y) \geq \frac{\mu^{\mathrm{ess}}_{\cal F} (Y)}{d+1}  .
\end{eqnarray}

  If $({\cal X} ,{\cal F})$ is a model over~${\cal O}_K$, in the sense of Definition~\ref{ZhangModels}, of a polarized abelian variety $(X,F)$ 
over $K ={\mathrm{Frac}} ({\cal O}_K )$, and $Y$ again is a $d$-dimensional subvariety of the generic 
fiber $X$, we still define its normalized N\'eron-Tate height relative to $F$ as the limit
$$
{\height}_{F} (Y) :=\lim_{n\to \infty} \frac{1}{N^{2n}}  \height_{{\cal F}} ( {[N^n ] Y} )
$$
where $N$ is any fixed integer larger than $1$ and ${[N^n ] Y}$ is the image  
of $Y$ under multiplication by $N^n$ in $X$.  This normalized height, which is a direct generalization of 
the classical notion of N\'eron-Tate height for points, is known not to depend neither on the 
model ${\cal X}$ of $X$, nor the extension ${\cal F}$ of $F$, nor its hermitian structure (and not on $N$), 
so that the notation ${\height}_{F} (\cdot )$ is finally unambiguous. We refer to~\cite{Ab97}, 
Proposition-D\'efinition~3.2 of Section~3 for more details. We will actually use 
the extension of the two inequalities~(\ref{Zhang}) and~(\ref{ZhangBis}) to the case where the heights and 
essential minima are those given by the limit process defining N\'eron-Tate height (which is known to be 
non-negative on points) that is, with obvious notations
\begin{eqnarray}
\label{ZhangNT}
\frac{\mu^{\mathrm{ess}}_{F} (Y)}{d+1}  \leq \height_{F} (Y) \leq  \mu^{\mathrm{ess}}_{F} ({Y}) 
\end{eqnarray}
see Th\'eor\`eme~3.4 of~\cite{Ab97}. As we will see in Section~\ref{greenzhang} and below, Moret-Bailly
theory allows, under certain conditions, to interpret N\'eron-Tate heights as Arakelov 
projective heights (that is, without going through limit process).

\subsection{N\'eron-Tate heights}

  We shall apply the above to cycles in modular abelian varieties endowed with their symmetric theta divisor:
the notation ${\height}_{{\Theta}}$ will always stand for normalized N\'eron-Tate height of cycles.

\begin{propo}
\label{heightXThetae}
Let $X$  be the image via $\pi_A \circ \iota_\infty \colon X_0 (p)\to A$ of the modular curve $X_0 (p)$ mapped to a non-zero quotient $\pi_A \colon J_0 (p)\to A$ of its jacobian, 
endowed with the polarization~${\Theta_A}$ induced by the $\Theta$-divisor (see~(\ref{chain}), (\ref{albanese}) and around). 
The degree and normalized N\'eron-Tate height of~$X$ satisfy:  
$$
\deg_{{\Theta_A}}  (X) =\dim (A) =O(p)
$$
and
$$
{\height}_{{\Theta_A}} (X) =O(\log p ) .
$$
\end{propo}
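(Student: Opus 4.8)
The plan is to compute the degree and normalized height of $X := (\pi_A \circ \iota_\infty)(X_0(p))$ separately, using the projection formula for normalized heights together with the arakelovian dictionary established in Sections~\ref{Chow} and~\ref{LaHauteurDeToutLeMonde}.

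For the degree, I would argue as follows. The curve $X$ sits inside $A$ with its induced polarization $\Theta_A$, and $\deg_{\Theta_A}(X)$ is by definition the intersection number $(X \cdot \Theta_A^{\dim A - 1})$ computed on the complex abelian variety $A(\C)$. Since $\iota_\infty \colon X_0(p) \hookrightarrow J_0(p)$ is an Albanese embedding and $\pi_A$ is, over $\C$, a linear projection of the relevant tangent/lattice data (see~(\ref{OnTangentSpaces}) and the discussion around~(\ref{chain})), the class of $X$ in $H_2(A(\C),\Z)$ is the pushforward of the fundamental class of $X_0(p)$. By Poincaré's formula for the theta divisor on a principally polarized abelian variety (or its image under an isogeny onto a quotient, where $\Theta_A$ is the induced polarization), one has $(\theta_{A}^{\dim A -1} \cdot [X]) = \dim A$ whenever $X$ generates $A$ — which it does, since $A$ is an optimal quotient of the jacobian of $X_0(p)$. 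More precisely, I would note that $\pi_A \circ \iota_\infty$ realizes $X_0(p) \to A$ as a curve whose image generates $A$, so that the induced map on $H^0$ of one-forms is the inclusion appearing in~(\ref{flat_A}), and then the matrix-coefficient / Wirtinger computation gives $\deg_{\Theta_A}(X) = \dim A$. Finally $\dim A \le \dim J_0(p) = g = O(p)$ by~(\ref{genussize}).

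For the normalized height, I would invoke Zhang's inequalities~(\ref{Zhang}) in their normalized (N\'eron--Tate) form, as recalled from~\cite{Ab97}: since $\dim X = 1$ they read $\mu^{\mathrm{ess}}_{\Theta_A}(X) \le \hat\height_{\Theta_A}(X)/\deg_{\Theta_A}(X) \le \mu^{\mathrm{ess}}_{\Theta_A}(X)$, i.e.\ they pinch, giving $\hat\height_{\Theta_A}(X) = \deg_{\Theta_A}(X)\cdot \mu^{\mathrm{ess}}_{\Theta_A}(X)$. So it suffices to bound the essential minimum $\mu^{\mathrm{ess}}_{\Theta_A}(X)$, i.e.\ to exhibit a Zariski-dense family of points $P$ on $X_0(p)(\overline\Q)$ (defined over bounded-degree fields) whose image has $\height_{\Theta_A}$ small, and simultaneously to bound $\mu^{\mathrm{ess}}$ from below. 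For the upper bound I would use the cusps: the cuspidal divisor $(0)-(\infty)$ is torsion (Manin--Drinfeld), hence its image in $A$ is a torsion point of height $0$; more usefully, by Theorem~\ref{TheoremTheta} and Proposition~\ref{jHeight}, for a point $P$ specializing suitably we have $\height_{\Theta}(P-\infty+\tfrac12\omega^0) = \tfrac{1}{[K:\Q]}[P,g\cdot\infty]_{\mu_0} + O(\log p)$, and since $\height_{\Theta_A} \le \height_{\Theta}$ (the $\Theta_A$-part is a direct summand in~(\ref{decomposeHeights})), controlling $[P,\infty]_{\mu_0}$ for a dense set of small points suffices. Here is where the arakelovian input is crucial: using the Green-function estimates of Lemma~\ref{0-infty}(b) together with the supremum bound $\sup g_{\mu_0} = O(p^2)$ from~\cite{Br14}, and noting the normalization $g\cdot\infty$ carries a factor $g = O(p)$, one gets $[P,g\cdot\infty]_{\mu_0} = O(p\log p)$ for the relevant points (e.g.\ Heegner points, or more simply points in the pre-image of bounded-height $j$-values), which yields $\mu^{\mathrm{ess}}_{\Theta_A}(X) = O(p\log p)$. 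Combining with $\deg_{\Theta_A}(X)=O(p)$ would naively give $O(p^2\log p)$, so to reach the sharper $O(p\log p)$ stated I would instead bound $\hat\height_{\Theta_A}(X)$ directly via a projection-formula argument: express $\hat\height_{\Theta_A}(X)$ through the arakelovian intersection $\tfrac{1}{[K:\Q]}[\,\overline{X_0(p)}\,,\tfrac12\omega + \ldots]$ on the arithmetic surface ${\cal X}_0(p)$, where the relevant quantity is essentially a multiple of $\omega^2/(2g-2)^2$ and of $[0,\infty]_{\mu_0}$-type terms, all of which are $O(p\log p)$ by Proposition~\ref{decomposeomega} (recall $\omega^2 = 3g\log p(1+o(1)) = O(p\log p)$) and Lemma~\ref{0-infty}.

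The main obstacle, as I see it, is getting the \emph{sharp} bound $O(p\log p)$ rather than a cruder polynomial one: the degree is genuinely of size $\sim p$, so one cannot afford to multiply a degree bound by a per-point height bound of size $\sim p\log p$; one must instead compute $\hat\height_{\Theta_A}(X)$ as an honest arithmetic intersection number on a model and exploit the cancellations / explicit shape of $\omega$, $\Phi_\omega$, $\Phi_\Theta$ from Proposition~\ref{decomposeomega}, where every error term was arranged to be $O(\log p)$ and the dominant term $\omega^2 = O(p\log p)$. A secondary subtlety is that $X$ generally lives only over $\overline\Q$ (the embedding depends on a choice of base point, and $\Theta_A$ need not be defined over $\Q$), so one must be careful that the normalized height is model-independent and that Zhang/Abbes' inequalities apply in the arithmetic-variety-over-$\overline\Q$ setting — but this is precisely what~\cite{Ab97}, Th\'eor\`eme~3.4 provides, so no real difficulty there beyond bookkeeping.
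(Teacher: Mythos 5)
Your degree computation is essentially the paper's own (the Wirtinger computation $\deg_{\Theta_A}(X)=\int_{X_0(p)}\iota_\infty^*\pi_A^*(\omega_A)=\dim A\le g=O(p)$; note in passing that ``$(\Theta_A^{\dim A-1}\cdot[X])=\dim A$ whenever $X$ generates $A$'' is not true in that generality -- think of $[2]_*$ of an Abel--Jacobi curve -- but your fallback matrix-coefficient argument is the correct one). The genuine gap is in the height bound. The paper gets $O(p\log p)$ in one line by feeding a single strong input into Zhang's inequality~(\ref{Zhang}): by the main result of Michel--Ullmo~\cite{MU98}, the essential minimum $\mu^{\mathrm{ess}}_{\Theta}(X_0(p))$ for the Albanese embedding $\iota_\infty$ is $O(\log p)$ (a Zariski-dense supply of algebraic points -- Heegner points of varying discriminant -- of N\'eron--Tate height $O(\log p)$); since normalized heights of points only decrease under the projection to $A$ (cf.~(\ref{decomposeHeights}) and paragraph~\ref{PolarizationsAndHeights}), the same $O(\log p)$ bound holds for $\mu^{\mathrm{ess}}_{\Theta_A}(X)$, and Zhang then gives $\hat{\height}_{\Theta_A}(X)\le(\dim)\cdot\mu^{\mathrm{ess}}_{\Theta_A}(X)\cdot\deg_{\Theta_A}(X)=O(p\log p)$, uniformly in the quotient $A$. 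Your proposal never reaches this $O(\log p)$ essential-minimum bound: the small points you offer do not do the job (the cusps give a single torsion image, not a dense family; points above bounded $j$-invariants have, by the very estimates you invoke -- $\sup g_{\mu_0}=O(p^2)$, or Lemma~\ref{LeNouvelEssentiel} -- theta height $O(p^2)$, and $[P,g\cdot\infty]_{\mu_0}$ carries the factor $g$), so you only get $\mu^{\mathrm{ess}}=O(p\log p)$ and hence $O(p^2\log p)$, as you yourself concede.

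Your rescue -- computing $\hat{\height}_{\Theta_A}(X)$ ``directly'' as an arakelovian intersection on ${\cal X}_0(p)$ dominated by $\omega^2/(2g-2)^2$ and $[0,\infty]_{\mu_0}$-type terms -- is exactly the step that is not carried out and would fail as written. Expressing the height of the curve inside its jacobian through $\omega^2$ is not bookkeeping: it is essentially the content of the Michel--Ullmo/Bost-type computations you would be re-deriving, and, more seriously, the Proposition is stated (and later used, with $A=J_e^\perp$) for an arbitrary optimal quotient $A$; for the image curve in such a quotient the relevant metric is $\mu_A$ of~(\ref{flat_A}), not $\mu_0$, the cycle lives in $A$ and not on the arithmetic surface, and one would have to control the pushforward through $\pi_A$ and the isogeny $I_A$ -- none of which reduces to ``a multiple of $\omega^2$ plus $[0,\infty]$-terms''. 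The whole point of the paper's route via essential minima and height-decrease under projection is to avoid precisely this. A last, minor, inaccuracy: Zhang's inequalities do not ``pinch'' for a curve -- the right-hand factor in~(\ref{Zhang}) is the absolute dimension of the arithmetic scheme (this is why Corollary~\ref{LesHauteursInegales} carries the factor $d_V+1$), so for a curve it is $2$, not $1$; this is harmless for an $O(\cdot)$ bound but the equality $\hat{\height}_{\Theta_A}(X)=\deg_{\Theta_A}(X)\,\mu^{\mathrm{ess}}_{\Theta_A}(X)$ you assert is not available.
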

\begin{proof}
If $(A,{\Theta_A})=({\mathrm{Jac}} (X_0 (p)) ,\Theta )$, it is well-known that the $\Theta$-degree of $X_0 (p)$ 
(or in fact any curve) embedded in its jacobian via some Albanese embedding, equals its genus. 
That can be seen in many ways, among which one can invoke Wirtinger's theorem (\cite{GH78}, p.~171), which 
yields in fact the desired result for any quotient $(A,\Theta_A )$:  using the notation before~(\ref{flat_A}) we have
$$
\deg_{\Theta_A} (X) =%
\int_{X_0 (p)} \sum_{f\in B_2^A}  \frac{i}{2} \frac{f\frac{dq}{q} \wedge \overline{f\frac{dq}{q}}}{\Vert f\Vert^2} =\dim A \leq g(X_0 (p)) . 
$$  
We then apply once more the fact  (\ref{genussize}) that the genus $g(X_0 (p))$ is roughly $p/12$.  %
(We could also have more simply say that the degree is decreasing by projection, as in the argument below.) 

As for the height, the main result of~\cite{MU98} gives that the essential minimum of the normalized N\'eron-
Tate height $\mu^{\mathrm{ess}}_{\Theta} (X_0 (p))$ is $O(\log p)$. As the height of points decreases by projection 
(see Section~\ref{PolarizationsAndHeights}, and in particular (\ref{decomposeHeights})) the same is true for 
$\mu^{\mathrm{ess}}_{{\Theta_A}} (X)$ and we conclude with Zhang's (\ref{ZhangNT}). $\Box$
\end{proof}

\medskip

Now for the N\'eron-Tate normalized height of symmetric squares and variants:

\begin{propo}
\label{heightXSquareThetae}
Assume $X:= X_0 (p)$ has gonality strictly larger than $2$ (which is true as soon as $p>71$, see~\cite{Og74}). 
Let $\iota :=\iota_\infty \colon X_0 (p) \hookrightarrow J_0 (p)$ 
be the Albanese embedding as in Proposition~\ref{heightXThetae}. Let $X^{(2)}$ be the symmetric square $X_0 
(p)^{(2)}$ embedded 
in $J_0 (p)$ via $(P_1 ,P_2 )\mapsto \iota  (P_1 ) +\iota  (P_2 )$, and similarly let  $X^{(2),-}$  be the image 
of $(P_1 ,P_2 )\mapsto \iota  (P_1 ) -\iota  (P_2 )$.
Let  $X^{(2)}_{e^\perp}$ and $X^{(2), -}_{e^\perp}$ be the projections of $X^{(2)}$ and 
$X^{(2),-}$, respectively, to $J_e^\perp$ (the ``orthogonal complement'' to the winding quotient $J_e$, see 
paragraph~\ref{Winding}). Then with notations as in Proposition~\ref{heightXThetae} taking~$A=J_0 (p)$ 
and $A= J_e^\perp$ respectively one has
$$
\deg_{\Theta}  (X^{(2)}) =  O(p^2 ) =\deg_{\Theta}  (X^{(2),-}),  {\hspace{1cm}}  {\height}_{\Theta} 
(X^{(2)}) = O(\log p) ={\height}_{\Theta} (X^{(2),-})
$$
\begin{center}
and the same holds for the quotient objects:
\end{center}
$$
\deg_{\Theta_e^\perp}  (X^{(2)}_{e^\perp} ) =  O(p^2 ) =\deg_{\Theta_e^\perp}  (X^{(2),-}_{e^\perp} ) \ ;  {\hspace{1cm}}  {\height}_{\Theta_e^\perp} 
(X^{(2)}_{e^\perp} ) =  O( \log p) = {\height}_{\Theta_e^\perp} (X^{(2),-}_{e^\perp} )  .
$$
\end{propo}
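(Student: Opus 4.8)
The plan is to re-run the proof of Proposition~\ref{heightXThetae} one symmetric power higher: compute the degrees by intersection theory on abelian varieties, bound the essential minima by combining the Michel--Ullmo estimate $\mu^{\mathrm{ess}}_{\Theta}(\iota(X_0(p)))=O(\log p)$ (from~\cite{MU98}, as already used in Proposition~\ref{heightXThetae}) with the fact that $\height_{\Theta}=\|\cdot\|^2$ is a positive quadratic form, and then feed degrees and essential minima into Zhang's inequalities~(\ref{Zhang}).

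First the degrees inside $J_0(p)$. Under the gonality hypothesis (valid for $p>71$) the curve has no $g^1_2$, so the Abel--Jacobi map $X_0(p)^{(2)}\to J_0(p)$, $\{P_1,P_2\}\mapsto\iota(P_1)+\iota(P_2)$, is injective and identifies $X^{(2)}$ with the image $W_2$ of the second symmetric power; by Poincar\'e's formula (\cite{GH78}) the class of $W_d$ in $H^{2(g-d)}(J_0(p))$ is $\Theta^{g-d}/(g-d)!$, so $\deg_{\Theta}(X^{(2)})=\deg_{\Theta}(W_2)=(\Theta^2\cdot\Theta^{g-2}/(g-2)!)=\Theta^g/(g-2)!=g(g-1)$, which is $O(p^2)$ by~(\ref{genussize}). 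For $X^{(2),-}$ one argues likewise: the difference map $X_0(p)\times X_0(p)\to J_0(p)$, $(P_1,P_2)\mapsto\iota(P_1)-\iota(P_2)$, is generically injective away from the diagonal — equality of two differences would force, in the absence of a $g^1_2$, the two associated degree-$2$ divisors to coincide — and its image has class $[\iota(X_0(p))]\ast[-\iota(X_0(p))]$ for the Pontryagin product $\ast$; since $[-1]$ acts trivially on $H_2(J_0(p))$ this equals $[\iota(X_0(p))]\ast[\iota(X_0(p))]$, which is $2[W_2]$ because the addition map is generically $2$-to-$1$ onto $W_2$. Hence $\deg_{\Theta}(X^{(2),-})=2g(g-1)=O(p^2)$.

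Next the heights inside $J_0(p)$ and the passage to $J_e^\perp$. The Michel--Ullmo bound gives a Zariski-dense set $S\subseteq X_0(p)$ with $\height_{\Theta}(\iota(P))=O(\log p)$ for $P\in S$; then $\{\iota(P_1)\pm\iota(P_2):P_i\in S\}$ is Zariski-dense in $X^{(2)}$ (resp.\ in $X^{(2),-}$), and $\height_{\Theta}(\iota(P_1)\pm\iota(P_2))\le 2\height_{\Theta}(\iota(P_1))+2\height_{\Theta}(\iota(P_2))=O(\log p)$, so $\mu^{\mathrm{ess}}_{\Theta}(X^{(2)})=O(\log p)=\mu^{\mathrm{ess}}_{\Theta}(X^{(2),-})$; the right-hand inequality of~(\ref{Zhang}) then gives $\hat{\height}_{\Theta}(X^{(2)})\le 2\deg_{\Theta}(X^{(2)})\,\mu^{\mathrm{ess}}_{\Theta}(X^{(2)})=O(p^2\log p)$, and likewise for $X^{(2),-}$. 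For the quotient objects let $\pi$ be the projection $J_0(p)\to J_e^\perp$ (composed, if one wants to land honestly in $J_e^\perp$, with the pseudo-projection of~(\ref{pseudoprojection}), which alters neither degrees nor normalized heights). One first checks that $\pi(X^{(2)})$ and $\pi(X^{(2),-})$ stay two-dimensional: each contains a translate of $\iota(X_0(p))$, which generates $J_0(p)$, so neither lies in a coset of a proper abelian subvariety, and were either a curve then $\pi\iota(X_0(p))$ would lie in a coset of an elliptic curve of $J_e^\perp$, forcing $J_e^\perp$ to be elliptic, against $\dim J_e^\perp\ge(\tfrac{1}{2}-o(1))\dim J_0(p)$ from~(\ref{KM}). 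The curvature form of $\Theta_e^\perp$ pulls back to $J_0(p)$ to a subsum of that of $\Theta$ (cf.~(\ref{mu_e})), so $\deg_{\Theta_e^\perp}(X^{(2)}_{e^\perp})\le\deg_{\Theta}(X^{(2)})=O(p^2)$, and the orthogonal splitting~(\ref{decomposeHeights}) gives $\height_{\Theta_e^\perp}(\pi x)\le\height_{\Theta}(x)$ on points, hence $\mu^{\mathrm{ess}}_{\Theta_e^\perp}(X^{(2)}_{e^\perp})\le\mu^{\mathrm{ess}}_{\Theta}(X^{(2)})=O(\log p)$; a last application of~(\ref{Zhang}) yields $\hat{\height}_{\Theta_e^\perp}(X^{(2)}_{e^\perp})=O(p^2\log p)$. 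The ``$-$'' versions are identical.

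The one point that is not a mechanical upgrade of Proposition~\ref{heightXThetae} is the control of the projection to $J_e^\perp$: checking that $\pi$ does not drop the dimension of the surface — this is exactly where the Iwaniec--Sarnak / Kowalski--Michel lower bound~(\ref{KM}) on $\dim J_e^\perp$ is used (and if one skipped this check the stated bounds would only become smaller) — together with making precise, in the normalized sense of~(\ref{produitsTheta}) and~(\ref{decomposeHeights}), that degree and height decrease under a projection which is only an isogeny onto a factor. A secondary care point is the bookkeeping of the Pontryagin class of the difference surface $X^{(2),-}$, where the factor $2$ and the triviality of $[-1]_\ast$ on $H_2$ must be used correctly.
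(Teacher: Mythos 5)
Your proof is correct, and the overall architecture is the same as the paper's: bound the degrees by $O(p^2)$, bound the essential minima by $O(\log p)$ using the Michel--Ullmo small points on $X_0(p)$ and the quadraticity of $\hat{\height}_\Theta$, feed both into Zhang's inequalities~(\ref{Zhang}), and pass to $J_e^\perp$ by the monotonicity of degree and height under projection. Where you genuinely diverge is the degree computation inside $J_0(p)$: the paper factors $X_0(p)\times X_0(p)\to A\times A\stackrel{M}{\to}A\times A\to A$ through $(x,y)\mapsto(x+y,x-y)$ and uses $M^*(\Theta_A^{\boxtimes 2})\simeq(\Theta_A^{\boxtimes 2})^{\otimes 2}$ together with the product formula for $\deg_{\Theta^{\boxtimes 2}}$ (its Lemma~\ref{degreeheight}) and decrease of degree under projection, obtaining only $O(g^2)$; you instead invoke Poincar\'e's formula and the Pontryagin product to get the exact values $g(g-1)$ and $2g(g-1)$. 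Your route buys sharper constants and makes transparent why the gonality hypothesis enters (birationality of $X^{(2)}\to W_2$ and generic injectivity of the difference map); the paper's route has the advantage of treating the sum and difference surfaces, and their images in arbitrary quotients $A$, by one uniform pullback computation without identifying the cycle classes. Your explicit verification that the projections to $J_e^\perp$ remain two-dimensional (via~(\ref{KM})) is not in the paper's proof and, as you note, is not needed for the stated upper bounds, though it is relevant to the later use of these surfaces in the B\'ezout argument; the paper instead uses~(\ref{KM}) at the end only to guarantee $\deg_{\Theta_{e^\perp}}(X_{e^\perp})$ is at least linear in $p$ so that $\hat{\height}/\deg$ stays $O(\log p)$ for the quotient curve.
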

\begin{proof}
Denoting by $p_1$ and $p_2$ the obvious projections below
we factor in the common way (see \cite{Mu66}, paragraph~3, Proposition~1 on p.~320) our maps over~$\Q$ as follows:
\begin{eqnarray}
\begin{array}{cccccclc}
                &                                       &                 &                                        &                   &   &  A  &   \\
                &                                       &                 &                                        &                   & \nearrow_{p_2}   &    &   \\
X_0 (p)\times X_0 (p) & \stackrel{\pi_A \iota \times\pi_A \iota}{\longrightarrow} & A \times A  & \stackrel{M}{\longrightarrow} &  A \times A    &                                      &       \\  
                &                           & (x,y)        & \mapsto                            & (x+y,x-y)    & \searrow^{p_1}     &       &      \\
               &                             &               &                                           &                  &                            & A         &  \\
\end{array}
\end{eqnarray}
so $X^{(2)} =p_1 \circ M \circ (\pi_A \iota \times\pi_A \iota ) (X_0 (p)\times X_0 (p) )$ and $X^{(2),-} =p_2 
\circ M \circ (\pi_A \iota \times\pi_A \iota )(X_0 (p)\times X_0 (p) )$ when $A=J_0 (p)$, and the same 
with $X^{(2)}_{e^\perp}$ and $X^{(2),-}_{e^\perp}$ with  $A=J^\perp_e$. We endow $A\times A$ with the hermitian 
sheaf ${\Theta_A}^{\boxtimes 2} :=p_1^* {\Theta_A} \otimes p_2^* {\Theta_A}$. Then 
$M^* ({\Theta_A}^{\boxtimes 2} )\simeq ( {{\Theta_A}^{\boxtimes 2}} )^{\otimes 2}$ 
(\cite{Mu66}, p.~320). Therefore, writing $X$ for $\pi_A \iota (X_0 (p))$ in short and using 
Proposition~\ref{heightXThetae}, 
$$
\deg_{{\Theta_A}^{\boxtimes 2}} (M(X\times X)) =4 \deg_{{\Theta_A}^{\boxtimes 2}} (X\times X) =8 
(\deg_{\Theta_A} (X))^2 =O(g^2 ). 
$$ 
As degree decreases by our projections and $O(g^2 )=O(p^2 )$, $\deg_{\Theta_A} (X^{(2)} )$ and $\deg_{\Theta_A} (X^{(2), -} )$ are  $O   (p^2 )$.
By definition of essential minima,
$$
\mu^{\mathrm{ess}}_{{\Theta_A}^{\boxtimes 2}} (X\times X) \le 2 \mu^{\mathrm{ess}}_{\Theta_A} (X) . 
$$
This implies that $\mu^{\mathrm{ess}}_{{\Theta_A}^{\boxtimes 2}} (M(X\times X)) \le 4
\mu^{\mathrm{ess}}_{\Theta_A} (X)$. Invoking~(\ref{ZhangNT}) again 
and Proposition~\ref{heightXThetae}
together with the fact that the height of points also decreases by projection, 
$$
\mu^{\mathrm{ess}}_{{\Theta_A}} (X^{(2)} ) \leq \mu^{\mathrm{ess}}_{{\Theta_A}^{\boxtimes 2}} (M(X\times X)) 
\leq  4 \mu^{\mathrm{ess}}_{\Theta_A} (X) \leq 8{\height}_{\Theta_A} (X) \leq O(\log p). 
$$
Therefore 
$$
{\height}_{\Theta_A} (X^{(2)} ) = O( \log p). \hspace{2cm} \Box
$$ 
\end{proof}

\medskip

  Note that this proof applies more generally to any sub-quotient of $J_0 (p)$.

\subsection{Moret-Bailly models and associated projective heights}
\label{Moret}

To build-up the projective models of the jacobian (over $\Z$, or finite extensions), and associated
heights, that we shall need for our arithmetic B\'ezout, we use Moret-Bailly theory, in the sense of~\cite{MB86}, as follows.
For more about similar constructions in the general setting of abelian varieties we refer to~\cite{BI96}, 
2.4 and 4.3; see also~\cite{Pa12}.

    Let therefore $(J,L(\Theta ))$ stand for the principally polarized abelian variety $J_0 (p)$ endowed with the invertible sheaf associated with its symmetric theta divisor, 
defined over some small extension of~$\Q$ (see~(\ref{theta}) below and around for more details). Endow the complex base-changes of the associated invertible sheaf $L(\Theta )$ 
with its cubist hermitian metric. If~${\cal N}_{J,{\cal O}_K}$ is the N\'eron model of $J$ over the ring of integers ${\cal O}_K$ of a number field $K$, we know it is a semistable scheme 
over ${\cal O}_K$, whose only non-proper fibers are above primes ${\mathfrak{P}}$ of characteristic $p$, where it then is purely toric. At any such ${\mathfrak{P}}$,
with ramification index $e_\mathfrak{P}$, the group scheme ${\cal N}_{J,{\cal O}_K}$ has components group 
\begin{eqnarray}
\label{ComponentGroup}
\Phi_\mathfrak{P} \simeq (\Z /N_0 e_\mathfrak{P} \Z ) \times (\Z /e_\mathfrak{P} \Z )^{g-1}   
\end{eqnarray}   
for $g :=\dim J$ and $N_0 :={\mathrm{num}} (\frac{p-1}{12} )$ (see e.g.~\cite{LF12}, Proposition~2.11).

  We choose and fix an integer $N >0$ and a number field $K \supseteq \Q (J [2N] )$, for all this 
paragraph, so that all the 2$N$-torsion points in $J$ have values in $K$. One then observes 
from~(\ref{ComponentGroup}) that $2N$ divides all the ramification indices $e_\mathfrak{P}$, 
and Proposition~II.1.2.2 on p.~45 of \cite{MB86}   
asserts that $L(\Theta )$ has a cubist extension, let us denote it by ${\cal L}(\Theta )$, to the open 
subgroup scheme ${\cal N}_{J,N}$ of the N\'eron model~${\cal N}_{J,{\cal O}_K}$ over ${\cal O}_K$
whose fibers have component group killed by $N$.

Such an extension ${\cal L}(\Theta )$ is actually symmetric (\cite{MB86}, Remarque II.1.2.6.2) and unique
(see Th\'eor\`eme~II.1.1.i) on p.~40 of loc. cit.). Moreover~${\cal L}(\Theta )$  is ample on~${\cal N}_{J, N }$ 
(\cite{MB86}, Proposition~VI.2.1 on p.~134). Its powers ${\cal L}(\Theta )^{\otimes r}$ are even very ample 
on~${\cal N}_{J,N } \times_{{\cal O}_K} {{\cal O}_K} [1/2p]$ as soon as $r \ge 3$, as follows from the general 
theory of theta functions. Provided $N>1$, the sheaf ${\cal L}(\Theta )^{\otimes N }$ is spanned by its global 
sections on the whole of~${\cal N}_{J,N}$ (\cite{MB86}, Proposition~VI.2.2), although we shall not use that 
last fact as such.

  Picking-up a basis of {\it generic} global sections in $H^0 (J_0 (p)_K , {L}(\Theta )^{\otimes N} )$, 
with $N\geq 3$, we thus defines a map $J_0 (p)_K  \stackrel{\jmath_{N}}{\longrightarrow} \PPP^{n}_{K}$, 
for $n=N^{g} -1$. 
Assume our generic global sections extend to a set ${\cal S}$ in $H^0 ({\cal N}_{J,N} , 
{\cal L}(\Theta )^{\otimes N} )$. 
Let  ${\cal J} \stackrel{\jmath}{\hookrightarrow} \PPP^{n}_{{\cal O}_K}$ be the schematic closure 
in $\PPP^{n}_{{\cal O}_K}$ of  the generic fiber $({\cal N}_{J,N} )_K =J_K$ via the associated composed embedding 
$J_K \hookrightarrow \PPP^{n}_K \hookrightarrow \PPP^{n}_{{\cal O}_K}$. 
Define ${\cal M} =\jmath^* {\cal O}_{\PPP^{n}_{{\cal O}_K}} (1)$ on ${\cal J}$. Let on the other 
hand ${\cal M}_{{\cal N}_{J,N}} :=\left( \sum_{s\in {\cal S}}
{\cal O}_K \cdot s\right)$ %
be the subsheaf of ${\cal L} (\Theta )^{\otimes N}$ on ${\cal N}_{J,N}$ spanned by ${\cal S}$. 
Write $\nu \colon {\widetilde{{\cal N}}_{J ,N}} \to {{\cal N}_{J ,N}}$ for the blowup at base points 
for ${\cal M}_{{\cal N}_{J,N}}$ on ${{\cal N}_{J ,N}}$,
that is, the blowup along the closed subscheme of ${{\cal N}_{J ,N}}$ defined by the sheaf ${\cal L}(\Theta )^{\otimes N} /{\cal M}_{{\cal N}_{J,N}}$. 
%
%
%
We have a commutative diagram 
\begin{eqnarray}
\label{modelA}
\begin{array}{rccl}
                   &  {\widetilde{{\cal N}}_{J ,N}}  & &     \\
                              \nearrow                   &   {}_{\imath_{{\cal N}}} \downarrow     &      \searrow^{\jmath_{{\cal N}}}  &          \\
{J}_K \ \   {\hookrightarrow} & {\cal J} & \stackrel{\jmath}{\hookrightarrow}   \ \  &      \PPP^{n}_{{\cal O}_K}   
 \end{array}
\end{eqnarray}
where the only non-trivial map $\jmath_{\cal N}$ (whence $\imath_{\cal N}$) is deduced from the fundamental 
properties of blowups. Considering the complex base-changes of  the generic fiber we note that ${\cal M}$ is automatically endowed 
with a cubist hermitian structure induced by that of $L({\Theta})_\C$ (see~\cite{BI96}, (4.3.3) and following lines). 

\begin{defi}
\label{GoodModels}
Given an integer $N\geq 3$, and a number field $K$ containing $\Q (J_0 (p) [2N] )$, we define the ``good model'' for $(J_0 (p), L(\Theta )^{\otimes N} )$ 
relative to some finite set ${\cal S}$ in $H^0 ({\cal N}_{J,N} , {\cal L}(\Theta )^{\otimes N} )$,
which spans $H^0 (J_0 (p) , L(\Theta )^{\otimes N} )$, as the projective scheme ${\cal J}$ over ${\mathrm{Spec}} 
({\cal O}_{K} )$ enhanced with the hermitian sheaf ${\cal M}$ 
constructed above, and $\height_{\cal M}$ the associated height.
\end{defi}

\medskip

  Outside base points for ${\cal M}_{{\cal N}_{J,N}}$ on ${{\cal N}_{J ,N}}$ the blowup $\nu \colon {\widetilde{{\cal N}}_{J ,N}} \to {{\cal N}_{J ,N}}$ is an 
isomorphism and on that open locus we have
\begin{eqnarray}
\label{ComputeNeron}
{\cal L}(\Theta )^{\otimes N} \simeq {\cal M}_{{\cal N}_{J,N}} \simeq \imath_{\cal N}^* {\cal M} =
\jmath_{\cal N}^* {\cal O}_{\PPP^n_{{\cal O}_K} }(1)  
\end{eqnarray} 
so we dwell on the fact that the height $\height_{\cal M}$ of our ``good models'' for $(J_0 (p), L(\Theta )^{\otimes N} )$ will indeed compute ($N$ times) 
the N\'eron-Tate height of {\it certain} $\overline{\Q}$-points 
(those whose closure factorizes through ${\cal N}_{J,N}$ deprived from the base points for ${\cal S}$), but 
definitely {\it not all}. 
For arbitrary points, still, one can deduce from the work of Bost (\cite{BI96}, 4.3) the following inequality.

\begin{propo}  
\label{InegalBost}
For any point $P$ in $J_0 (p)(\overline{\Q} )$, the height $\height_{\cal M} (P)$ of 
Definition~\ref{GoodModels} satisfies
$$
\height_{\cal M} (P) \leq N \, {\height}_\Theta (P).
$$ 
\end{propo}

\begin{proof}
We briefly adapt~\cite{BI96}, 2.4 and 4.3, 
using our above notations. Of course this statement has nothing to see with modular jacobians, and holds for any abelian variety over a number field.
Let $N'$ be some integer such that $P$ defines a section of ${\cal N}_{J ,N'} ({\cal O}_F )$ for some ring of 
integers ${\cal O}_F$. Up to replacing ${\cal O}_F$ by a sufficiently ramified finite extension, we can 
assume $L(\Theta )^{\otimes N}$ has a cubist extension ${\cal L}(\Theta )^{\otimes N}$ 
to all of ${\cal N}_{J ,N'}$ over ${\cal O}_F$ (\cite{MB86}, Proposition~II.1.2.2).
One has
$$
{\height}_\Theta (P)= \frac{1}{N} \frac{1}{[F :\Q ]} \widehat{\deg} (P^* ({\cal L}
(\Theta )^{\otimes N} )) .
$$ 
As in~(\ref{modelA}) however we see that there is no well-defined map from ${{\cal N}_{J ,N'}}$ to 
$\PPP^{n}_{{\cal O}_F}$ because 
${\cal L}(\Theta )^{\otimes N}$ needs not be spanned by elements of $\cal S$ on all of ${{\cal N}_{J ,N'}}$ 
(even though it  is, by hypothesis, on the generic fiber). To remedy this we adapt the construction~(\ref{modelA}).

  If $\pi' \colon {\cal N}_{J,N'} \to {\mathrm{Spec}} ({\cal O}_F )$ is the structural morphism, we define now
${{\cal M}'}_{\cal N} :=\left( \sum_{s\in {\cal S}} {\cal O}_F \cdot s\right)$  
as the subsheaf of ${\cal L} (\Theta )^{\otimes N}$ on ${\cal N}_{J,N'}$ spanned by ${\cal S}$, still endowed with the metric induced
by that of ${\cal L} (\Theta )^{\otimes N}$.  One checks (see~\cite{BI96}, (4.3.8)) that the projective model ${\cal J}_{{\cal O}_F}$ of 
$({\cal N}_{J ,N'} )_F \simeq J_F$ in $\PPP^n_{{\cal O}_F}$ defined as in~(\ref{modelA}) yields a sheaf ${\cal M}'$ on ${\cal J}_{{\cal O}_F}$, 
whence a height $\height_{{\cal M}'}$, which coincides with  the height $\height_{{\cal M}}$ on the base change of the good 
model ${\cal J}_{{\cal O}_K}$.

  Replacing ${{\cal N}_{J ,N'}}$ by its blowup $\nu' \colon {\widetilde{{\cal N}}_{J ,N'}} \to {{\cal N}_{J ,N'}}$ at base points 
for ${{\cal M}'}_{\cal N}$ in ${\cal L} (\Theta )^{\otimes N}$ on ${{\cal N}_{J ,N'}}$, we keep on following
construction~(\ref{modelA}) to obtain maps ${\imath'}_{{\cal N}} \colon {\widetilde{{\cal N}}_{J ,N'}} 
\to {\cal J}_{{\cal O}_F}$ and ${\jmath'}_{{\cal N}} \colon {\widetilde{{\cal N}}_{J ,N'}} 
\to \PPP^{n}_{{\cal O}_F}$ such that the Zariski closure of ${{\jmath'}_{{\cal N}}} 
({\widetilde{{\cal N}}_{J ,N'}})$ identifies with ${\cal J}_{{\cal O}_F}$.
We moreover have  
$$
{\imath'}_{{\cal N}}^* ({\cal M}' ) ={\nu'}^* ({\cal L}(\Theta )^{\otimes N} )\otimes {\cal O} (-E)
$$   
where $E$ is the exceptional divisor of the blowup which is by definition effective. 
The section $P$ of ${{\cal N}_{J ,N'}} ({{\cal O}_F})$ lifts to some 
${\widetilde{P}}$ of ${\widetilde{{\cal N}}_{J ,N'}} ({{\cal O}_F})$.
Let $\varepsilon_P$ be the section of ${\cal J}({{\cal O}_F})$ defined by the Zariski closure of $P(F)$ 
in $\cal J$. One can finally compute
\begin{eqnarray}
\height_{\cal M} (P) = \height_{{\cal M}'} (P) & = &\frac{1}{[F :\Q ]} \widehat{\deg} 
(\varepsilon_P^* ({{\cal M}'} )) = \frac{1}{[F :\Q ]} \widehat{\deg} (\tilde{P}^* 
({\imath'}_{{\cal N}}^* ({\cal M}' ) )) \nonumber \\
  &   \leq &  \frac{1}{[F :\Q ]} \widehat{\deg} (\tilde{P}^* ({\nu'}^* 
  ({\cal L}(\Theta )^{\otimes N} ) )) 
 =  \frac{1}{[F :\Q ]} \widehat{\deg} 
 (P^* ({\cal L}(\Theta )^{\otimes N} )) =N \, {\height}_\Theta (P).
\hspace{0.2cm} \Box \nonumber
\end{eqnarray}
\end{proof}

\medskip
 
The following straightforward generalization to higher dimension will be useful in next section.
 
\begin{coro}
\label{LesHauteursInegales}
If $Y$ is a $d$-dimensional irreducible subvariety of $J_0 (p)$ 
then 
$$
\height_{\cal M} (Y) \leq  (d +1)\, N\, {\height}_\Theta (Y).
$$ 
\end{coro}

\begin{proof}
Combine Zhang's formulas (\ref{Zhang}) and (\ref{ZhangNT}) with  Proposition~\ref{InegalBost}. $\Box$
\end{proof}

\bigskip

Recall from~(\ref{pseudoprojection}) that one can define the ``pseudo-projection'' ${\cal P}_{\tilde{J}_{e^\perp}} 
(\iota_\infty (X_0 (p)))$ of the image of
$X_0 (p) \stackrel{\iota_\infty}{\hookrightarrow} J_0 (p)$ on the subabelian 
variety $\tilde{J}_{e^\perp} \subseteq J_0 (p)$. Let $X_{e^\perp}$ be any of its irreducible components.
Define similarly ${X}^{(2)}$, ${X}^{(2),-}$, ${X}^{(2)}_{e^\perp}$ and ${X}^{(2), -}_{e^\perp}$
as in Proposition~\ref{heightXSquareThetae}. 
Note that, by construction, the degree and normalized N\'eron-Tate height of $X_{e^\perp}$ (and other similar pseudo-projections: $X^{(2)}_{e^\perp}$ etc.), as an 
irreducible subvariety of $J_0 (p)$ endowed with ${\height}_\Theta$, are those of $\pi_{J_e^\perp} (X_0 (p)) =X^{(2),-}_{e^\perp}$ relative to the only natural 
hermitian sheaf of ${J}_e^\perp$, that is, the $\Theta_e^\perp =\Theta_{J_e^\perp}$ described in paragraph~\ref{PolarizationsAndHeights} and estimated in 
Proposition~\ref{heightXThetae}.

\begin{coro}
\label{DonnezNousDonnezNousDesHauteurs}
For any fixed integer $N\ge 3$, and any number field $K$ containing $\Q (J_0 (p) [2N] )$, 
let $( {\cal J} , {\cal M} )$ be the good model for $(J_0 (p) ,L(\Theta )^{\otimes N} )$, and 
$\height_{\cal M}$ the associated projective height, given in Definition~\ref{GoodModels}. Let $X$ be 
the image of $X_0 (p) \stackrel{\iota_\infty}{\hookrightarrow} J_0 (p)$, 
and more generally ${X}^{(2)}$,  ${X}^{(2),-}$, ${ X}^{(2)}_{e^\perp}$ and ${X}^{(2), -}_{e^\perp}$ be the 
objects ${X}^{(2)}, \dots$ defined in Proposition~\ref{heightXSquareThetae} (or their pseudo-projections). 
Then their ${\cal M}^{\otimes \frac{1}{N}}$-heights are bounded from above by similar 
functions as their N\'eron-Tate height (Proposition~\ref{heightXSquareThetae}). 
Explicitly, $\height_{{\cal M}^{\otimes  \frac{1}{N}}} ({X}_0 (p) )$ is less than $O(\log p)$, and 
$\height_{{\cal M}^{\otimes  \frac{1}{N}}} {X}^{(2)}$, etc., are all less than $O(\log p)$.
Similarly the ${\cal M}^{\otimes  \frac{1}{N}}$-degree of $X_0 (p)$ is $O(p)$,    
and the ${\cal M}^{\otimes  \frac{1}{N}}$-degrees of ${X}^{(2)}$, etc., are all $O(p^2 )$.
\end{coro}

\begin{proof}
Combine Zhang's formulas (\ref{Zhang}) and (\ref{ZhangNT}) with  Propositions \ref{heightXThetae}, 
\ref{heightXSquareThetae} and \ref{InegalBost}. $\Box$
\end{proof}

\subsection{Estimates on Green-Zhang functions for $J_0 (p)$}
\label{greenzhang}

We shall later on need some control on the $p$-adic N\'eron-Tate metric of $\Theta$ 
as alluded to in Remark~\ref{symmetricHodge}. (Those statements can probably be best formulated in the setting of Berkovich theory, for which one might check in 
particular~\cite{Du06}, Proposition~2.12, and~\cite{Th05}. A useful point of view is also proposed by that of ``tropical jacobians'', see~\cite{MZ08} and~\cite{dJ17}.
We will content ourselves here with our down-to-earth point of view). We therefore define 
$$
\hat{\Phi}_p :=\lim_{\stackrel{\longrightarrow}{K_{\mathfrak{P}} \supseteq \Q_p}} \Phi_{\mathfrak{P}}
$$ 
as the direct limit, on a tower of totally ramified extensions $K_{\mathfrak{P}} /\Q_p$, of the component groups $\Phi_{{\mathfrak{P}}}$ of 
the N\'eron models of $J_0 (p)$ at ${\mathfrak{P}}$, see~(\ref{ComponentGroup}). The compatible embeddings
$$
Z:=\langle C_0 -C_\infty \rangle \simeq \langle (0)-(\infty )\rangle \simeq \Z /N_0 \Z   \hookrightarrow \Phi_{{\mathfrak{P}}} 
$$ 
for each ${\mathfrak{P}}$ induce an exact sequence $0\to Z \to \hat{\Phi}_p \to \varinjlim_{e_{\mathfrak{P}}} (\Z /e_{\mathfrak{P}} \Z )^g \simeq (\Q /\Z )^g \to 0$.
Passing to the real completion yields a presentation:
\begin{eqnarray}
\label{presentationPhi} 
0\to Z \simeq \Z /N_0 \Z  \to \hat{\Phi}_{p,\R} \to  (\R  /\Z )^g  \to 0
\end{eqnarray}
(where $\hat{\Phi}_{p,\R}$ must be the ``skeleton'', in the sense of Berkovich, of the N\'eron model over $\overline{\Z}_p$ of $J_0 (p)$, and the tropical jacobian,
see~\cite{dJ17}, of the curve $X_0 (p)$ above $p$). The right-hand side of~(\ref{presentationPhi}) is more canonically written $(\R  /\Z )^g \simeq (\R /\Z )^s / \Delta (\R )$, for $\Delta$
the almost diagonal map 
$$
\Delta  (z) \mapsto (\frac{1}{w_i} z )_{1\le i\le g+1}
$$
(see~\cite{LF12}, Proposition~2.11.(c)).

\medskip

  We then sum-up useful properties about theta divisors and theta functions ``over $\overline{\Z}$''.

\medskip

  As $J_0 (p)$ is principally polarized over $\Q$, the complex extension of scalars $J_0 (p) (\C )$ can be given a classical complex 
uniformization $\C^g /(\Z ^g +\tau \Z^g )$ for some $\tau$ in Siegel's upper half plane. The associated Riemann theta function:  
\begin{eqnarray}
\label{theta}
\theta (z) =\sum_{m\in \Z^g } \exp (i\pi {}^t m\cdot \tau \cdot m +2i\pi  {}^t m\cdot z)
\end{eqnarray}
defines the tautological global section $1$ of a trivialization of ${\cal O}_{J_0 (p)}  ({\Theta}_{\C } ) (={\cal M}_{\C}^{\otimes 1/N} )$ for $\Theta_\C$ the image $W_{g-1}$
of some $(g-1)$st power of $X_0 (p)$ in $J_0 (p)$. More precisely, 
Riemann's classical results (e.g. \cite{GH78}, Theorem on p.~338) assert that $\mathrm{div} (\theta (z)) =\Theta_\C$ is the divisor with support $\{ \kappa_{P_0}  +
\sum_{i=1}^{g-1} \iota_{P_0} (P_i ), P_i \in X_0 (p) (\C ) \}$, where for any $P_0 \in X_0 (p)(\C )$ we write 
$\iota_{P_0} \colon X_0 (p) \hookrightarrow J_0 (p)$ for the 
Albanese morphism with base point $P_0$, and $\kappa =\kappa_{P_0} =``\frac{\iota_{P_0} (K_{X_0 (p)})}{2} "$ for the image of Riemann's characteristic, which 
is some pre-image under duplication in $J_0 (p)$ of the image of some canonical divisor: $\omega^0 =\iota_{P_0} (K_{X_0 (p)})$ 
(see Theorem~\ref{TheoremTheta} above).

Among the translates $\Theta_{D} =t^*_{D} \Theta$, for $D\in J_0 (p) (\C)$, of the above symmetric $\Theta$, the divisor
$\Theta_\kappa =t^*_{\kappa} \Theta  =\sum_{i=1}^{g-1} \iota_\infty (X_0 (p)_\Q )$ defines an invertible sheaf $L (\Theta_\kappa )$ on $J_0 (p)$ over $\Q$. 
If ${\cal N}_{J ,1}$ denotes the neutral component of the N\'eron model of $J$ over $\Z$ and ${\cal L}(\Theta_\kappa )$ is the cubist extension of $L (\Theta_\kappa )$ 
to ${\cal N}_{J ,1}$ (compare~\cite{MB86}, Proposition~II.1.2.2, as in Section~\ref{Moret} above), we know that $H^0 ({\cal N}_{J ,1} ,{\cal L}(\Theta_\kappa ) )$ is a (locally...) 
free ${\Z}$-module of rank $1$, so that the complex base-change $H^0 (J_0 (p) (\C ) ,L (\Theta_{\kappa ,\C} ))$ is similarly a complex line. This means that 
if $s_{\theta}$ is a generator of the former space, whose image in the later we denote by $s_{\theta ,\C}$, there is a nonzero complex number~$C_{\vartheta}$ such that 
\begin{eqnarray}
\label{constanteC}
s_{\theta , \C} (z )=C_{\vartheta} \cdot \theta (z +\kappa ) .
\end{eqnarray}
Up to making some base-change from $\Z$ to some ${{\cal O}_K}$  we can now
forget about $\kappa$ and come back to the symmetric $\Theta$: we define a global section 
\begin{eqnarray}
\label{laSM0}
s_{{\cal J}^0} :=  ( t^*_{-\kappa} )s_{\theta}  \in H^0 ({\cal N}_{J ,1 } , {{\cal L}(\Theta )}_{{\cal O}_K} ) \hspace{0.5cm} {\mathrm{so\ that}} \hspace{0.5cm} s_{{\cal J}^0 ,\C } (z) =C_\vartheta\cdot \theta(z) .
\end{eqnarray}

  If one replaces ${\cal N}_{J ,1 }$ by the N\'eron model, say ${\cal N}_{{\cal O}_{K_1}}$, of $J_0 (p)$ over any extension $K_1$ of $K$, then~\cite{MB86}, 
Proposition~II.1.2.2 insures that up to making some further field extension $K_2 /K_1$ the sheaf ${{L}(\Theta )}_{K_2}$ has a cubist 
extension ${{\cal L}(\Theta )}_{{\cal O}_{K_2}}$ to ${\cal N}_{{\cal O}_{K_1}} \times_{{\cal O}_{K_1}} {{\cal O}_{K_2}}$. Therefore $s_{{\cal J}^0}$ extends to 
a {\it rational} section (we shall sometimes write {\it meromorphic} section) of ${{\cal L}(\Theta )}_{{\cal O}_{K_2}}$
on ${\cal N}_{{\cal O}_{K_1}} \times_{{\cal O}_{K_1}} {{\cal O}_{K_2}}$. Abusing notations we still denote that extended section by $s_{{\cal J}^0}$, and write 
accordingly $\Theta$ for its divisor ${\mathrm{div}} (s_{{\cal J}^0} )$ on ${\cal N}_{{\cal O}_{K_1}} \times_{{\cal O}_{K_1}} {{\cal O}_{K_2}}$. Because $s_{{\cal J}^0}$ 
is well-defined (and non-zero) on the neutral component of the N\'eron model, its poles on ${\cal N}_{{\cal O}_{K_1}} \times_{{\cal O}_{K_1}} {{\cal O}_{K_2}}$ can 
only show-up at places of bad reduction. 

\begin{propo}
\label{padicmetric0}

The multiplicity of the $\Theta$-divisor at any component of the N\'eron model of $J_0 (p)$ over $\overline{\Z}$, normalized to be $0$ along the neutral component,
is $O(p )$. %

\end{propo}

\begin{proof}

We start by the following observations. Let us write $s_{{\cal J}^0 ,\C } (z) =C_\vartheta\cdot \theta(z)$ as in~(\ref{laSM0}).
Take $D$ in $J_0 (p)(\C )$ which can written as the linear equivalence class of some divisor 
$$ 
D= \sum_{i=1}^g -(Q_i -\infty ) 
$$
for points $Q_i$ in $X_0 (p)(\C )$. We associate to $D$ the embedding:
$$
\iota_{\kappa +D} \colon \left\{ 
\begin{array}{rcl}
X_0 (p) & \hookrightarrow & J_0 (p) \\
P & \mapsto & {\mathrm{cl}} (P-\infty +\kappa +D ) 
\end{array}
\right.
$$
where $\kappa$ is Riemann's characteristic (see just before~(\ref{ThetaHeight2})). For such a $D$ whose $Q_i$ are assumed to belong to $X_0 (p)(\overline{\Q})$, 
we know from the proof of Theorem~\ref{ThetaZhang} (see~(\ref{ThetaHeight})) that 
\begin{eqnarray}
\label{ThetaDkappa1}  
\height_\Theta (\iota_{\kappa +D} (P)) =\frac{1}{[K(P,D) :\Q ]} [P,{\tilde{\omega}}_D  ]_{\mu_0}
\end{eqnarray}
with 
\begin{eqnarray}
\label{explicitdivisor00}
{\tilde{\omega}}_D =\sum_i Q_i  +\Phi_D +c_D X_\infty
\end{eqnarray} 
and $\Phi_D$ is the explicit vertical divisor 
\begin{eqnarray}
\label{explicitdivisor}
\Phi_D = \frac{1}{2} \left( \Phi_\omega +\Phi_{\vartheta}  \right) - \sum_{i=1}^{g} \Phi_{Q_i}  
\end{eqnarray}
at each bad place, with notations as those of the proof of~Theorem~\ref{ThetaZhang}, see~(\ref{ThetaDivisor}).

  Moreover, it is well-known that there is a subset of $J_0 (p)(\C )$ which is open for the complex topology, and even the Zariski topology, in which all 
points $D=\sum_1^g -(Q_i -\infty)$ as above are such that 
\begin{eqnarray}
\label{RR}  
\dim_\C H^0 (X_0 (p)(\C ), L (-D+g\cdot \infty )_\C ) =\dim_\C H^0 (X_0 (p)(\C ), \iota_{\kappa +D}^* L (\Theta_\C ))  =1
\end{eqnarray}
so that $\iota_{\kappa +D}^* (\Theta_\C ) =\sum_i Q_{i,\C}$, the latter being an equality between effective divisors, not just a linear equivalence 
(\cite{GH78}, pp.~336--340). As the height $\height_\Theta$, in the N\'eron model of $J_0 (p)$, can be understood as the Arakelov intersection 
with $\Theta ={\mathrm{div}} (s_{{\cal J}^0} )$
it follows that, on the curve $X_0 (p)$, ${\mathrm{div}} (s_{{\cal J}^0 ,\C} )\cap \iota_{\kappa +D} (X_0 (p))(\C ) =
\cup_i \iota_{\kappa +D} (Q_{i,\C} )$, or ${\mathrm{div}} (\iota_{\kappa +D}^* (s_{{\cal J}^0 ,\C} ) )= \sum_i Q_i$ over $\C$. 
More precisely, extending base 
to some ring of integers ${\cal O}_K$ so that the $Q_i$ define sections of the minimal regular model ${\cal X}_0 (p)_{{\cal O}_K}$ of $X_0 (p)$ over ${\cal O}_K$, 
and making if necessary a further base extension such that ${\cal L}(\Theta )$ has a cubist extension on the whole N\'eron model of $J_0 (p)$ over ${{\cal O}_K}$ 
(as after~(\ref{laSM0})), one sees that $s_{{\cal J}^0 }$ defines a meromorphic section of ${\cal L}(\Theta )_{{\cal O}_K}$ and the restriction to the generic fiber
${X}_0 (p)_K$ of ${\mathrm{div}} ( \iota_{\kappa +D}^* (s_{{\cal J}^0 } ))$ has to be equal (and not merely linearly equivalent) to $\sum_i Q_i $.
Now in such a situation, the multiplicity of ${\mathrm{div}} ( s_{{\cal J}^0 } )$ on a component of the 
N\'eron model to which ${\cal X}_0 (p)_{{\cal O}_K}^{\mathrm{smooth}}$ is mapped via $\iota_{\kappa +D}$, can be read on the multiplicity 
of $\iota_{\kappa +D}^* (s_{{\cal J}^0} )$ along that component of ${\cal X}_0 (p)_{{\cal O}_K}^{\mathrm{smooth}}$. In turn, because of decompositions
of the arithmetic Chow group similar to that of Theorem~\ref{Hodge}, multiplicities of ${\mathrm{div}} ( s_{{\cal J}^0 } )$ are determined 
by the $\Phi_D$ of $(\ref{explicitdivisor00})$, up to constant addition of vertical fibers. The property that ${\mathrm{div}} ( s_{{\cal J}^0 } )$ has multiplicity $0$ 
along the neutral component of the N\'eron model (see~(\ref{laSM0})) fixes that last indetermination.   
Now if $\mathfrak{P}$ is a place of bad reduction for ${\cal X}_0 (p)_{{\cal O}_K}$, and if the $Q_i$ move sligthly in the 
$\mathfrak{P}$-adic topology (without modifying their specialization component at $\mathfrak{P}$),  the vertical divisor $\Phi_D$ does not change either at 
$\mathfrak{P}$, and the above reasoning regarding the components values of $\Theta$ is actually independent from the fact that condition~(\ref{RR}) holds 
true or not (provided, we insist, that the specialization components of the $Q_i$ at $\mathfrak{P}$ do not vary).

\medskip

We shall gain some flexibility with a last preliminary remark. If $k$ is any integer between $0$ and $N_0 -1$ (recall $N_0$ is the order of the Eisenstein 
element $(0-\infty )$), the divisor ${\tilde{\omega}}_D$ of~(\ref{explicitdivisor00}) can still be written as 
$$
{\tilde{\omega}}_D =  \left( k\cdot 0+ (g-k)\cdot \infty  -k\, \Phi_{C_0}  +\frac{1}{2} (\Phi_\omega + \Phi_{\vartheta} )   -\tilde{D} \right) +c_D X_\infty
$$
so that if 
$$
D= \left( \sum_{i=1}^g -(Q_i -\infty ) \right) +k(0-\infty ) =\sum_{i=1}^k -(Q_i -0) +\sum_{i=k+1}^g -(Q_i -\infty  )
$$ 
then ${\tilde{\omega}}_D =\sum_{i=1}^g Q_i  +\Phi_D +c_D X_\infty$
where $\Phi_D$ is still  
\begin{eqnarray}
\label{explicitdivisor2}
\Phi_D = \frac{1}{2} \left( \Phi_\omega +\Phi_{\vartheta}  \right)  - \sum_{i=1}^{g} \Phi_{Q_i}  .
\end{eqnarray}

\medskip 
 
   Coming back to the proof of the present Proposition~\ref{padicmetric0}, and assuming first $D=0$, it follows from what we have just discussed that the multiplicity of 
the $\Theta$-divisor on the components of the jacobian to which the components 
of ${\cal X}_0 (p)_{{\cal O}_K}^{\mathrm{smooth}}$ map under $\iota_{\kappa}$ is given by the functions $g_n$ and $G$ of~(\ref{ZhangMetric1}) 
and~(\ref{ZhangMetric2}), see Theorem~\ref{ThetaZhang}. To obtain the multiplicity of the $\Theta$-divisor on {\em all} components of the jacobian
we shall shift our Albanese embeddings $\iota_{\kappa +D}$ in order to explore all of $J_0 (p)/J_0 (p)^0$ with successive translations 
of ${\cal X}_0 (p)_{{\cal O}_K}^{\mathrm{smooth}}$ inside $J_0 (p)$.

   To be more explicit, let $\mathfrak{C}$ be an element of the component group $J_0 (p)/J_0 (p)^0$ at $\mathfrak{P}$, and $D= \sum_{i=1}^g (P_i -\infty )$ 
be a divisor, with all $P_i$ in $X_0 (p)(K)$, which reduces to $\mathfrak{C}$ at $\mathfrak{P}$. For all $r$ in $\{ 1,\dots ,g\}$, set $D_r =\sum_{i=1}^{r}  
(P_i -\infty )$ and let also $k_r$ in $\{ 1, \dots ,N_0 -1 \}$ and $Q_{i,r}$ be $g$ associated points on the curve such that one can write both
$$
D_r  = \sum_{i=1}^r (P_i -\infty ) \hspace{0.5cm} {\mathrm{and}} \hspace{0.5cm} D_r= \sum_{i=1}^g  -(Q_{i,r} -\infty ) +k_r (0-\infty ) .
$$
As always in this proof, up to making a finite base-field extension one can assume all points have values in $K$. 
Recall also from the discussion above that one can move slightly the $Q_i$ in the $\mathfrak{P}$-adic topology, as all that interests us here is the component
$\mathfrak{C}_r$, $1\le r\le g$, of~$(J_0 (p)/J_0 (p)^0 )_\mathfrak{P}$ to which $D_r$ maps. One can therefore  
assume if one wishes that $\iota_{\kappa +D_r}^* (\Theta_\C ) =\sum_i Q_{i,\C}$ (equality, not just linear equivalence). 
The presentation of $\Phi_{{\mathfrak{P}}}$ given in~(\ref{presentationPhi}) and above also shows one can 
assume that the specialization components at $\mathfrak{P}$ of the $Q_{i,r}$, in ${\cal X}_0 (p)_{{\cal O}_K}^{\mathrm{smooth}}$, which are 
not $C_\infty$, are all different (see Figure~\ref{figure}).

  Taking first $D=0$, that is, using the map $\iota_{\kappa}$, we already remarked that~(\ref{explicitdivisor}) implies the 
value $V_1$ of ${\mathrm{div}} (s_{{\cal J}^0 } )$ on $\mathfrak{C}_1$ is $V_1 =[\frac{1}{2} \left( \Phi_{\vartheta} +{\Phi_\omega} \right) , P_1 ] =\frac{1}{2} 
\left( [\Phi_{\omega} , P_1] +[\Phi_{P_1} ]^2 \right)$ (see~(\ref{PhiP-PP})). By Remark~\ref{CoeffPhi} and~(\ref{sizePhiOmega}), $\vert V_1 \vert \leq 2$. 

Going one step further we reach $\mathfrak{C}_2$ by considering the Albanese image $\iota_{\kappa + D_1 } ({\cal X}_0 (p)_{{\cal O}_K}^{\mathrm{smooth}} )$ and looking 
at the image of $P_2$. Here we need not to forget that the $\infty$-cusp in $X_0 (p)$ now maps to $\mathfrak{C}_1$, so the normalization of components-divisor on the {\it curve}  
${\cal X}_0 (p)_{{\cal O}_K}^{\mathrm{smooth}}$ at ${\mathfrak{P}}$ cannot be fixed to be $0$ along the $\infty$-component any longer: it needs to take the value~$V_1$ 
found above, in order to match with the normalization of the theta divisor on the jacobian. 
Applying the same reasoning as before with formula~(\ref{explicitdivisor2}) gives that the value of $\Theta$ on $\mathfrak{C}_2$ is 
\begin{eqnarray}
V_2 & = & [P_2 ,\frac{1}{2} \left(  \Phi_\omega +\Phi_{\vartheta} \right)   - \sum_{i=1}^{g} \Phi_{Q_{i,1}}  +V_1 ] = \frac{1}{2} \left( [\Phi_{\omega} , P_2 ] 
+[\Phi_{P_2} ]^2 \right) -  \sum_{i=1}^{g} [\Phi_{Q_{i,1}} ,P_2 ] +V_1 \nonumber
\end{eqnarray}
so that $\vert V_2 \vert \leq 9$ invoking Remark~\ref{CoeffPhi} again, and recalling the $Q_{i,1}$ specialize to different branches of Figure~\ref{figure}. 

 From there the inductive process is clear which yields that the value of $\Theta$ on $\mathfrak{C}_r$ has absolute value less or equal to $7 r$, whence the proof of  
 Proposition~\ref{padicmetric0}.  $\Box$

\end{proof}

\subsection{Explicit modular version of Mumford's repulsion principle}

We conclude this section by writing-down, for later use, an explicit version of Mumford's well-known ``repulsion principle'' for points, in the case of modular curves. 
\begin{propo}
\label{Mumford}
For $P$ and $Q$ two different points of $X_0 (p) (\overline{\Q} )$ one has  
\begin{eqnarray}
\label{effectiveMumford}
{\height}_\Theta (P -Q ) \geq \frac{g-2}{4g} \left( {\height}_\Theta (P -\infty ) +{\height}_\Theta (Q -\infty ) \right)  
- O(p\log p )  .
\end{eqnarray}
\end{propo}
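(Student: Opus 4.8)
The plan is to deduce the effective estimate~(\ref{effectiveMumford}) from Mumford's inequality relating $\hat{\height}_\Theta(P-Q)$ to the sum of self-heights of $P$ and $Q$ (relative to a base point), together with the explicit control of error terms already established in Sections~\ref{Chow} and~\ref{LaHauteurDeToutLeMonde}. First I would recall the precise Arakelov-theoretic content of Mumford's repulsion principle: for a curve $X$ of genus $g$ embedded in its jacobian by an Albanese map $\iota_{P_0}$, and for the symmetric theta polarization, one has an inequality of the shape
$$
\langle \iota(P),\iota(Q)\rangle_\Theta \leq \frac{1}{g}\,\Bigl(\hat{\height}_\Theta(P-P_0)\,\hat{\height}_\Theta(Q-P_0)\Bigr)^{1/2}\;+\;(\text{height of }X\text{-type terms}),
$$
which via $\hat{\height}_\Theta(P-Q)=\hat{\height}_\Theta(P-P_0)+\hat{\height}_\Theta(Q-P_0)-2\langle\iota(P),\iota(Q)\rangle_\Theta$ (up to the bounded correction coming from $\omega^0$, since the symmetric $\Theta$ is a translate of $\theta$ by $\kappa/(2g-2)$) gives a lower bound for $\hat{\height}_\Theta(P-Q)$. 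Applying $2\sqrt{ab}\leq a+b$ to absorb the cross term and keeping track of the factor $(g-2)/(2g)$ that appears in the classical statement (Mumford's gap is $1-2/g$ in suitable normalization, which after symmetrization around $\kappa$ becomes the coefficient $(g-2)/(4g)$ claimed), one recovers the main term on the right-hand side of~(\ref{effectiveMumford}). The hypothesis that $X_0(p)$ has gonality $>2$ (true for $p>71$ by~\cite{Og74}) is what guarantees the inequality is genuinely of ``repulsion'' type, i.e.\ that distinct points cannot be too close; it enters through the fact that $\iota_{P_0}$ is an embedding of $X^{(2)}$ away from the diagonal, so that the intersection-theoretic comparison between $\iota(P)+\iota(Q)$ and points of $X^{(2)}$ is non-degenerate.

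Next I would make all the error terms explicit. The $O(p^2)$ error has two sources, both already bounded above: the Faltings/N\'eron-Tate height of the relevant surfaces $X^{(2)}$ and $X^{(2),-}$ inside $J_0(p)$, which is $O(p^2\log p)$ by Proposition~\ref{heightXSquareThetae}, and — more crucially — the change of base point and the archimedean Green-function contributions, controlled by Lemma~\ref{0-infty}, Proposition~\ref{decomposeomega} and the comparison~(\ref{Thetaheight})–(\ref{jThetaheightinfty}) of Theorem~\ref{TheoremTheta}. Concretely: passing from $\hat{\height}_\Theta(P-\infty)$ to the ``symmetrized'' height $\hat{\height}_\Theta(P-\infty+\tfrac12\omega^0)$ costs only $O(\log p)$ by~(\ref{tailleOmega}) and Cauchy–Schwarz, so those are negligible against $O(p^2)$. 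The genuinely quadratic-in-$p$ contribution comes from the degree and height of the image curve and its square: in the Arakelov/arithmetic-B\'ezout estimate underlying Mumford's inequality one picks up a term proportional to $\deg_\Theta(X^{(2)})$ times $\mu^{\mathrm{ess}}$, i.e.\ $O(p^2)\cdot O(\log p)$; I would absorb the $\log p$ into a slightly larger absolute constant (or simply write $O(p^2\log p)$ and note it is $O(p^2)$ after adjusting — but since the statement says $O(p^2)$ I will instead cite the sharper $O(p\log p)$ essential-minimum bounds of Proposition~\ref{heightXThetae}, which give $\hat{\height}_\Theta(X)\cdot\deg_\Theta(X)^{-1}=O(\log p)$ and hence, after the quadratic dependence on the gonality-$2$ map degree, exactly $O(p^2)$).

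The technical core, and the step I expect to be the main obstacle, is writing down Mumford's inequality in a form where the coefficient is provably $(g-2)/(4g)$ and the remainder is provably $O(p^2)$ rather than something larger. The classical references (Mumford, \cite{Mu66}; or the Arakelov version, e.g.\ via~\cite{BI96}) give the inequality over $\C$ or with non-explicit constants; here one must run the argument on the arithmetic surface ${\cal X}_0(p)/{\cal O}_K$, using the Hodge index theorem (Theorem~\ref{Hodge}) to split $\iota(P)-\iota(Q)$ into its jacobian part and its vertical/archimedean parts, bound the latter by Lemma~\ref{VerticalComput} and Lemma~\ref{0-infty}, and then invoke the inequality $h^0(X^{(2)},\cdot)$-type argument (equivalently, the fact that $\theta$ restricted to $X-X$ has a pole of order $g$ at the origin) to get the gap $g$ in the denominator. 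I would structure this as: (i) reduce to the symmetric normalization so that $\hat{\height}_\Theta$ is an honest quadratic form on $J_0(p)(\overline{\Q})\otimes\R$; (ii) apply the formal Mumford estimate $\langle x,y\rangle \le \frac1g\|x\|\,\|y\| + c(X)$ where $c(X)$ is controlled by $\deg_\Theta(X)$, $\hat{\height}_\Theta(X^{(2)})$ and the archimedean comparison constants, all $O(p^2)$ by the cited propositions; (iii) expand $\hat{\height}_\Theta(P-Q)$, use $2\|x\|\|y\|\le\|x\|^2+\|y\|^2$, and collect. The delicate point throughout will be confirming that none of the intermediate error terms secretly grow like $p^2\log p$ in a way that cannot be absorbed — which is why one needs the $O(\log p)$ essential-minimum bounds rather than the cruder $O(p\log p)$ height bounds, and why the gonality hypothesis $p>71$ (ensuring $\mathrm{gon}(X_0(p))>2$) is invoked.
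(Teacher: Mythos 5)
Your step (iii) — expand $\hat{\height}_\Theta (P-Q)$ as a quadratic form, bound the cross term for distinct points, and pay only $O(\log p)$ to pass between $\hat{\height}_\Theta (P-\infty )$ and the symmetrized $\hat{\height}_\Theta (P-\infty +\frac{1}{2}\omega^0 )$ — is indeed the skeleton of the paper's argument, which runs adjunction plus the Hodge index theorem on ${\cal X}_0 (p)_{/{\cal O}_K}$ to get
$$
2[K:\Q ]\hat{\height}_\Theta (P-Q) = [P+Q,\omega ]_{\mu_0} +2[P,Q]_{\mu_0} +[\Phi_P -\Phi_Q ]^2 ,
$$
and then bounds $[P,\omega ]_{\mu_0}$ from below by $\frac{g-2}{g} [K:\Q ]\hat{\height}_\Theta (P-\infty +\frac{1}{2}\omega^0 ) +[K:\Q ]O(\log p)$ via (\ref{Thetaheight}). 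The coefficient bookkeeping you sketch is consistent with that.

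The genuine gap is in your accounting of the $O(p^2)$. In the paper the \emph{only} source of that term is the lower bound $[P,Q]_{\mu_0} \geq -[K:\Q ]\sup_{X_0 (p)(\C )} g_{\mu_0}$, valid for $P\neq Q$ because the finite intersection numbers are non-negative and the archimedean contribution is $-\sum_\sigma g_{\mu_0} (P_\sigma ,Q_\sigma )$; the whole point is then Bruin's explicit estimate $\sup g_{\mu_0} \leq c\, p^2$ ((\ref{Merkl}), Remark~\ref{CorrectOrder}, \cite{Br14}). You never invoke this sup-norm bound on the Green function. Instead you attribute the $O(p^2)$ to the degree and N\'eron--Tate height of $X^{(2)}$, $X^{(2),-}$ and to a B\'ezout-type constant $c(X)$. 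That route does not close: Proposition~\ref{heightXSquareThetae} gives $O(p^2 \log p)$ for those heights, and, more importantly, a bound on the height of the \emph{surface} $X^{(2),-}$ says nothing about the cross term $\langle \iota (P),\iota (Q)\rangle_\Theta$ for one particular pair of distinct points — for that one needs a pointwise bound on $g_{\mu_0}$, which is exactly what \cite{Br14} supplies and what you must cite. Finally, the gonality hypothesis is a red herring here: the repulsion for distinct points comes from $P\neq Q$ alone (non-negativity of the finite intersections plus the Green-function bound), and the proposition holds for all $p>17$ without any appeal to $\mathrm{gon}(X_0 (p))>2$; that hypothesis is only needed for Proposition~\ref{heightXSquareThetae}.
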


\begin{proof}
Let $K$ be a number field such that both $P$ and $Q$ have values in $K$. Using notations of Section~\ref{Chow}, the adjunction formula and Hodge index theorem give
\begin{eqnarray}
2 [K:\Q ]{\height}_\Theta (P-Q) & = &  -\left[ P-Q-\Phi_{P} +\Phi_Q , P-Q-\Phi_{P} +\Phi_Q  \right]_{\mu_0} \nonumber \\
                                                 & = & [P +Q,\omega ]_{\mu_0}   +2[P,Q]_{\mu_0}      +[\Phi_P -\Phi_Q ]^2    \nonumber \\
                                                 & \geq & [P +Q,\omega ]_{\mu_0}   -2[K:\Q ] \sup g_{\mu_0}      +[\Phi_P -\Phi_Q ]^2 . \nonumber
\end{eqnarray}
In the same way,
\begin{eqnarray}
[P,\omega ]_{\mu_0}  & = &      2 [K:\Q ]{\height}_\Theta (P -\infty ) -2[P,\infty ]_{\mu_0} +[\infty ]_{\mu_0}^2 -[\Phi_{P} ]^2   \nonumber \\  
                    & \geq &  [K:\Q ] {\height}_\Theta (P -\infty +\frac{1}{2} \omega^0 )    -2[P,\infty ]_{\mu_0} +[\infty ]_{\mu_0}^2 -[\Phi_{P} ]^2   \nonumber  
\end{eqnarray}
where the last inequality comes from the quadratic nature of ${\height}_\Theta$, plus the fact that the 
error term of~(\ref{effectiveMumford}) allows us to 
assume ${\height}_\Theta (P-\infty )\geq \frac{1}{12-8\sqrt{2} } {\height}_\Theta ( \omega^0 )=O(\log p)$ (see~(\ref{tailleOmega}) and the end of proof of 
Theorem~\ref{TheoremTheta}). Now by~(\ref{lesestimees}),
$$
{\height}_\Theta (P -\infty +\frac{1}{2} \omega^0 ) =\frac{1}{[K:\Q ]} [P,g\cdot \infty ]_{\mu_0} +O(\log p)
$$ 
and using Remark~\ref{CoeffPhi}  and Lemma~\ref{0-infty} gives
$$
[P,\omega ]_{\mu_0} \geq \frac{g-2}{g} [K:\Q ] {\height}_\Theta (P -\infty +\frac{1}{2} \omega^0 ) +[K:\Q ] O( \log p).
$$
As $[\Phi_P ,\Phi_Q ]=[P,\Phi_Q ]=[Q ,\Phi_P ]$, we have $\vert [\Phi_P ,\Phi_Q ] \vert \leq 3 [K:\Q ]\log p$
using Remark~\ref{CoeffPhi} again. Putting everything together with Remark~\ref{CorrectOrder} about $\sup g_{\mu_0}$ we obtain
$$
{\height}_\Theta (P -Q ) \geq \frac{g-2}{2g} \left( {\height}_\Theta (P -\infty +\frac{1}{2} \omega^0) +
{\height}_\Theta (Q -\infty +\frac{1}{2} \omega^0) \right)  
- O(p\log p ) 
$$ 
which, by our previous remarks, can again be written as
$$
{\height}_\Theta (P -Q ) \geq \frac{g-2}{4g} \left( {\height}_\Theta (P -\infty ) +{\height}_\Theta (Q -\infty ) \right)  
- O(p\log p ) .\hspace{3cm} \Box
$$ 
\end{proof}

\medskip

(For large $p$, the angle between two points of equal large enough height is here therefore at least $\arccos (3/4 )-\varepsilon > \pi  /6$. Of course the 
natural value is $\pi/2$, to which one tends when sharpening the computations.)

\section{Arithmetic B\'ezout theorem with cubist metric}
\label{ArithmeticBezout}

We display in this section an explicit version of B\'ezout arithmetic theorem, in the sense of Philippon or 
Bost-Gillet-Soul\'e (\cite{Ph95+}, \cite{BGS94}), for intersections of cycles in our modular abelian varieties  
over number fields, with the following variants: we use Arakelov heights (as in 
Section~\ref{LaHauteurDeToutLeMonde} above, see~(\ref{lavraiehauteur}))
on higher-dimensional cycles, and we endow the implicit hermitian sheaf for this height with its cubist 
metric (instead of Fubini-Study). 

   It indeed seems that one generally uses Fubini-Study metrics for arithmetic B\'ezout because they are the only natural explicit ones available on 
a general projective space (a necessary frame for the approach we follow for B\'ezout-like statements). They moreover have the pleasant feature that the relevant projective 
embeddings have tautological basis of global 
sections with sup-norm less than $1$ which, for instance, allows for proving that the induced Faltings height is non-negative on effective
cycles (see~\cite{Fa91}, Proposition~2.6). For our present purposes however, we need bounds for the 
N\'eron-Tate heights of points, that is, Arakelov heights induced by cubist metrics.  One could in principle have tried working with Fubini-Study metrics 
as in \cite{BGS94} and then directly compare  with N\'eron-Tate heights, but comparison terms tend to be huge. In the case of rational points, for instance 
(that is, horizontal cycles of relative dimension $0$), within jacobians, those error terms are bounded by Manin and Zarhin (\cite{MZ72}) linearly in the ambient projective 
dimension, that is exponential in the dimension of the abelian variety. In other words, for our modular curves, the error terms would be exponential in the level $p$. 
It is therefore much preferable to stick to 
cubist metrics. This implies we avoid the use of joins as in \cite{BGS94}, as those need a sheaf metrization on the whole of the ambient projective 
spaces, and we instead use plain Segre embeddings. The extra numerical cost essentially consists of the appearance of modest binomial coefficients, 
which do not significantly alter the quantitative bounds we eventually obtain. 

  We also need to work with projective models which are ``almost'' compactifications of relevant N\'eron 
models of our jacobians. This we do with the help of Moret-Bailly theory as introduced in 
Section~\ref{LaHauteurDeToutLeMonde}.  

   Let us also recall that there still is another approach for such arithmetic B\'ezout theorems which uses Chow forms (\cite{Ph95+}, \cite{Re00}). That is 
however known to amount to working again with Faltings height relative to the Fubini-Study metrics (\cite{Ph95+}-I, \cite{So91}) that  we said we cannot afford.

   Finally, regarding generality: it would of course be desirable to have a proof available for 
arbitrary abelian varieties. Many of the present arguments are however quite particular to our 
application to $J_0 (p)$. We therefore prefer working  in our concrete setting from the beginning, 
instead of considering a somewhat artificial generality.

\begin{propo} {\bf (Arithmetic B\'ezout theorem for $J_0 (p)$).}
\label{Bezout}
Let $(J_0 (p),\Theta )$ be defined over some number field $K$, endowed with the principal and 
symmetric polarization $\Theta$. Let $V$ and $W$ be two irreducible $K$-subvarieties of $J_0 (p)$, of 
dimension $d_V:=\dim_K V$ and $d_W :=\dim_K W$ respectively, such that 
$$
d_V +d_W \leq g =\dim J_0 (p) 
$$
and assume $V\cap W$  has dimension $0$. %

\medskip

   If $P$ is an element of $(V\cap W) (K )$ then its N\'eron-Tate $\Theta$-height satisfies  
\begin{eqnarray}
\label{contentOfBezout}
{\height}_{\Theta} (P) & \leq & 
\frac{4^{d_V +d_W}}{2} \, \frac{(d_V +d_W +1)!}{d_V ! \, d_W !} \deg_{\Theta} (V) \deg_{\Theta} (W)
\Big[ (d_W +1){\height}_{\Theta}  (W)  + (d_V +1){\height}_{\Theta}  (V) \nonumber \\
 & & \hspace{9.5cm} +O(p\log p) \Big]  .
\end{eqnarray}
\end{propo}

\medskip

\begin{rema}
{\rm{The general aspect of the above release of arithmetic B\'ezout might look a bit different from the 
original ones, as can be found in~\cite{BGS94}: this is due to the fact that our definition of the 
height of some cycle $Y$ (see Section~\ref{LaHauteurDeToutLeMonde}, (\ref{lavraiehauteur})) amounts to
dividing its height in the sense of~\cite{BGS94} by the product of the degree and absolute dimension 
of $Y$.

}}
\end{rema}

\medskip

   Let us first sketch the strategy of proof, which occupies the rest of this 
Section~\ref{ArithmeticBezout}. We henceforth fix a prime number $p$ and some perfect square 
integer $N:= r^2$. (We shall eventually take $r=2$.) We write $({\cal J},{\cal M})$
for the Moret-Bailly projective model of $(J_0 (p),L(\Theta )^{\otimes N} )$ given by Definition~\ref{GoodModels}, 
relative to some given set of global sections ${\cal S}$ in $H^0 ({\cal N}_{J,N} , {\cal L}(\Theta 
)^{\otimes N} )$, of size $N^{g}$, to be described later (Lemma~\ref{lespetites}). That model is defined 
over some ring of integers ${\cal O}_K$. Consider the morphisms: 
\begin{eqnarray}
\label{Bezout0}
\begin{array}{rccl}
{\cal J} \stackrel{\Delta}{\longrightarrow} & {\cal J}\times {\cal J} &   & \\
                                                               &   {\cal P} \downarrow    &     \searrow \iota   &          \\
                                                               &  \PPP^n_{{\cal O}_K} \times \PPP^n_{{\cal O}_K} & \stackrel{S}{\longrightarrow}  & \PPP^{n^2 +2n}_{{\cal O}_K} 
\end{array}
\end{eqnarray}
where $\Delta$ is the diagonal map, $n=N^{g} -1$, ${\cal P}$ is the product of two ${{\cal S}}$-embeddings ${\cal J} \stackrel{\jmath}{\hookrightarrow} \PPP^n =
\PPP^n_{{\cal O}_K}$ and the application $\iota \colon {{\cal J}\times {\cal J} \to \PPP^{n^2 +2n}}$ is the composition of the Segre embedding $S$ with ${\cal P}$. 
As sheaves,  
$$
S^* ({\cal O}_{\PPP^{n^2 +2n}}(1)) = {\cal O}_{\PPP^n}(1) \otimes_{{\cal O}_K} {\cal O}_{\PPP^n} (1)  
$$
and
$$
{\cal P}^* ({\cal O}_{\PPP^n}(1) \otimes_{{\cal O}_K}  {\cal O}_{\PPP^n} (1)) ={\cal M} \otimes_{{\cal O}_K}  {\cal M} =: 
{{\cal M}^{\boxtimes 2}} 
$$
so that 
$$
 \iota^* ({\cal O}_{\PPP^{n^2 +2n}} (1)) ={{\cal M}^{\boxtimes 2}}
$$
and
\begin{eqnarray}
\label{mDeux}
\Delta^* \iota^* {\cal O}_{\PPP^{n^2 +2n}} (1) ={\cal M} \otimes_{{\cal O}_{\cal J}}  {\cal M} = {\cal M}^{\otimes 2} .
\end{eqnarray}
We naturally endow the sheaves ${\cal M}^{\boxtimes 2}$, ${\cal M}^{\otimes 2} $, and so on, with the hermitian structures 
induced by the cubist metric on the various ${\cal M}_{\sigma}$ for $\sigma\colon K\hookrightarrow \C$, denoted by $\| \cdot \|_{\mathrm{cub}}$.

  We then pick two copies $(x_{i} )_{0\leq i\leq n}$ and  $(y_{j} )_{0\leq j\leq n}$ of the canonical basis of global sections for each ${\cal O}_{{\PPP^n}} (1)$ on the two 
factors of $\PPP^n_{{\cal O}_K} \times \PPP^n_{{\cal O}_K}$ of~(\ref{Bezout0}), which give our basis ${\cal S}$ by restriction to ${\cal J}$. Then we provide the 
sheaf ${\cal O}_{\PPP^{n^2 +2n}} (1)$ on $\PPP^{n^2 +2n}_{{\cal O}_K}$ with the basis of global sections $(z_{i,j} )_{0\leq i,j\leq n}$, each of which is mapped 
to $x_i \otimes_{{\cal O}_K} y_j$ under $S^*$. Define ${\cal D}$ as the diagonal linear subspace of $\PPP^{n^2 +2n}_{{\cal O}_K}$ defined by the {\it linear} 
equations $z_{i,j} =z_{j,i}$ for all $i$ and $j$.

   Let $V,\, W\subseteq J={\cal J}_K$ be two closed subvarieties over $K$. The support of $V\cap W$ is 
the same as that of  $(\iota \circ \Delta )^{-1} ({\cal D}\cap \iota (V\times W))$. To bound from above 
the height of points in $V\cap W$ it is therefore sufficient to estimate Faltings' height of 
${\cal D}\cap \iota (V\times W)$, relative to the hermitian line bundle ${\cal O}_{\PPP^{n^2 +2n}} 
(1)_{\vert \iota (J\times J)}$ endowed with the cubist metric. As ${\cal D}$ is a linear subspace
that height is essentially the same as that of $(V\times W)$, up to an explicit error term which
depends on the degree. In turn this error term is a priori linear in the number of (relevant) equations for ${\cal D}$, and this is way too high. 
But if one knows $V\cap W$ has dimension $0$, it is enough to choose $(\dim V +\dim W)$ equations (up to perhaps increasing a bit the 
size of the set whose height we estimate), which makes the error term much smaller.

\medskip

   That is the basic strategy of proof for Proposition~\ref{Bezout}. To make it effective however we must control the ``error terms'' alluded to in the preceding
lines, and those crucially depend on the supremum, on the set $\cal S$, of values for the cubist metric of global sections defining the projective embedding ${\cal J} \hookrightarrow 
\PPP^n_{{\cal O}_K}$. We shall build that $\cal S$ using theta functions as follows.

\medskip

  Recall Riemann's theta function on $J_0 (p)$ introduced in Section~\ref{greenzhang}, see~(\ref{theta}). Its usual analytic norm is 
\begin{eqnarray}
\label{normtheta}
\| \theta (z ) \|_{\mathrm{an}} := \det ({\Im (\tau )})^{1/4} \exp (-\pi y \, \Im (\tau )^{-1} y) \vert \theta (z )\vert 
\end{eqnarray}
for $z=x+iy \in \C^g$  (see~\cite{Mo90}, (3.2.2)). That analytic metric  will have to be compared to the cubist one, about which we recall 
the following basic facts. 

\medskip

   Let $A$ be an abelian variety over a number field $K$, which extends to a semiabelian scheme ${\cal A}$ over the ring of integers ${\cal O}_K$.
We endow ${\cal A}$ with a symmetric ample invertible sheaf ${\cal L}$. Define, for $I\subseteq \{ 1,2,3\}$, the projection $p_I \colon {\cal A}^3 \to {\cal A}$, 
$p_I (x_1 ,x_2 ,x_3 )=\sum_{i\in I} x_i$. 
It is known to follow from the theorem of the cube (\cite{MB86}) that the sheaf ${\cal D}_3 ({\cal L}) :=\bigotimes_{I\subseteq \{ 1,2,3\}} 
p_I^* {\cal L}^{\otimes (-1)^{|I|}}$ is trivial on ${\cal A}^3$. Let us therefore fix an isomorphism $\phi \colon {\cal O}_{{\cal A}^3} \to {\cal D}_3 
({\cal L})$. For every complex place $\sigma$ of ${\cal O}_K$ one can endow ${\cal L}_\sigma$ with some cubist metric $\| \cdot \|_\sigma$ 
such that one obtains through $\phi$ the trivial metric on ${\cal O}_{{\cal A}^3}$. Each cubist metric $\| \cdot \|_\sigma$ is determined only up to multiplication 
by some constant factor so we perform the following rigidification to remove that ambiguity. If $0_{\cal A} \colon {\mathrm{Spec}} ({\cal O}_K )\to {\cal A}$ denotes the 
zero section, we replace ${\cal L}$ by ${\cal L}\otimes_{{\cal O}_K} (\pi^* 0_{\cal A}^* {\cal L}^{\otimes -1} )$ on ${\cal A}$ . Then 
$$
0_{\cal A}^* ({\cal L})\simeq {\cal O}_K
$$
and we demand that the $\| \cdot \|_\sigma$ be adjusted so that the above sheaf isomorphism is an isometry at each $\sigma$, where ${\cal O}_K$ is endowed with the 
trivial metric so that $\| 1\| =1$. This uniquely determines our cubist metrics $\| \cdot \|_\sigma$.
Now by construction the hermitian sheaf ${\cal L}$ on ${\cal A}$ defines a height $\height$ verifying the expected normalization condition $\height (0 )=0$. 

Having the same curvature form, the analytic and cubist metrics are known to differ by constant factors, at each complex place, on the Theta sheaf, as we shall use in the 
proof of Lemma~\ref{lacompacte} below.

\medskip

Recall we also defined in~(\ref{laSM0}) a ``meromorphic theta function $s_{{\cal J}^0}$ over ${\overline{\Z}}$'', which 
can be generalized: we have $[r ]^* {{\cal L}(\Theta )}_{\vert {\cal N}_{J ,1}} \simeq {{\cal L}
(\Theta )}^{\otimes r^2}$ on ${\cal N}_{J ,r}$ (\cite{Pa12}, Proposition~5.1) so we define a global section 
\begin{eqnarray}
\label{laSM}
s_{\cal M} :=  ([r ]^* t^*_{-\kappa} )s_{{\cal J}^0}  \in H^0 ({\cal N}_{J ,r } , [r ]^* 
{{\cal L}(\Theta )}_{{\cal O}_K} ).
\end{eqnarray}
We will shortly show how to control the supremum of $\| s_{{\cal J}^0} \|_{\mathrm{cub}}$, therefore of 
$\| s_{\cal M} \|_{\mathrm{cub}}$, on $J_0 (p) (\C )$ (see~Lemma~\ref{lacompacte}). 
Writing $N=r^2$, we shall moreover fix the morphism $\jmath_{\cal M} \colon \widetilde{{\cal N}}_{J,N} \to {\cal J} 
\hookrightarrow {\PPP}^n_{{\cal O}_K}$ of~(\ref{modelA}) 
by mapping the canonical coordinates $(x_i )_{0\leq i\leq n}$ to sections $(s_i )$ which will be 
translates by $r$-torsion points of a multiple of the above $s_{\cal M}$ by some constant, as explained in 
Lemma~\ref{lespetites} and its proof. 

This will allow us to control as well the supremum of those $s_i$, relative to the cubist metrics, on the 
complex base change of our abelian varieties, as is required by the proof of arithmetic B\'ezout theorems.  

\medskip

We now start the technical preparation for the proof of Proposition~\ref{Bezout}, for which we need some Lemmas on the behavior of 
heights and degree under Segre maps, comparison between cubist and analytic metrics on theta functions, and estimates for all. 

\medskip

\begin{lem}
\label{LeNouvelEssentiel}
There is an infinite sequence $(P_i )_{i\in \N}$ of points in $X_0 (p) (\overline{\Q} )$ which are ordinary at all places dividing $p$ and 
have everywhere integral $j$-invariant. Moreover their normalized theta height satisfies ${\height}_{\Theta}
(P_i -\infty +\frac{1}{2} \omega^0 )=O (p^3 )$, with notations of 
Theorem~\ref{ThetaZhang}. 
\end{lem}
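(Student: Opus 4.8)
The plan is to take the $P_i$ to be Heegner points attached to imaginary quadratic discriminants in which $p$ splits. First I would fix, for the given prime $p$, an infinite set $\mathcal D$ of negative fundamental discriminants $D$ with $\bigl(\tfrac{D}{p}\bigr)=1$ (so that $p$ splits in $\Q(\sqrt D)$); such $D$ exist in abundance by Dirichlet's theorem. To each $D\in\mathcal D$ I attach a Heegner point $P_D\in X_0 (p)(\overline\Q )$, i.e.\ a cyclic $p$-isogeny $E\to E'$ with $E$, $E'$ having complex multiplication by the maximal order of $\Q(\sqrt D)$; it exists precisely because $p$ splits, and it is defined over the Hilbert class field $H_D$. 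Since singular moduli are algebraic integers, $j(P_D)$ is integral at every finite place; since $p$ splits in the CM field, $E$ and $E'$ have ordinary good reduction at every place of $H_D$ above $p$, so $P_D$ is ordinary there. Distinct $D$ give non-isomorphic CM orders, hence distinct $j$-invariants and distinct points, so $(P_D)_{D\in\mathcal D}$ is an infinite sequence of the required shape (one may enlarge the common field of definition to a fixed $K$ without altering heights).

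Next I would reduce the height estimate to a Green-function estimate. Let $P=P_D$, defined over a number field $F$. Because $j(P)\in\OO_F$, the Zariski closures in $\calX_0 (p)_{/\OO_F}$ of $P$ and of the cusp $\infty$ are disjoint: at a finite place $v\nmid p$ because $j(P)$ is $v$-integral while the cusp has $j=\infty$, and at $v\mid p$ because $j(P)\bmod v$ is a finite element of $\overline{\F_p}$, so $P$ does not specialise to a cusp. Hence $[P,\infty]_v=0$ for every finite $v$, whence
$$
[P,\infty]_{\mu_0}=-\sum_{\sigma\colon F\hookrightarrow\C}g_{\mu_0}(P_\sigma ,\infty_\sigma ).
$$
Feeding this into Theorem~\ref{TheoremTheta}, formula~(\ref{Thetaheight}) (computed over $F=\Q(P)$, which is legitimate since the normalized intersection is field-independent), and using $g=O(p)$ from~(\ref{genussize}),
$$
\height_{\Theta}\!\left(P-\infty +\tfrac12\omega^0\right)= -\frac{g}{[F:\Q ]}\sum_{\sigma}g_{\mu_0}(P_\sigma ,\infty_\sigma ) + O(\log p),
$$
so it suffices to show that the average $\tfrac1{[F:\Q ]}\sum_\sigma g_{\mu_0}(P_\sigma ,\infty_\sigma )$ is bounded below by $-O(p)$.

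Finally I would estimate that average. The conjugates $P_\sigma$ run exactly through the Heegner points of discriminant $D$ on $X_0 (p)$, permuted simply transitively by the class group, so their images in $X(1)(\C)$ run through the CM points of discriminant $D$ (together with their $p$-isogenous partners, of the same discriminant). Writing $\tau_{[a,b,c]}=(-b+\sqrt D)/2a$ for the reduced forms of discriminant $D$, one has $\Im\tau_{[a,b,c]}=\sqrt{|D|}/2a$, and for the hyperbolic cuspidal Green function $g_{\mathrm{hyp}}(\tau ,\infty )=2\pi\,\Im\tau +O(1)$ in a cuspidal neighbourhood, $O(1)$ elsewhere; combined with the class-number formula $h(D)=\tfrac{w\sqrt{|D|}}{2\pi}L(1,\chi_D)$ and the classical identity $\sum_{[a,b,c]}\tfrac1a\asymp L(1,\chi_D)$ this gives $\tfrac1{h(D)}\sum_\sigma g_{\mathrm{hyp}}(P_\sigma ,\infty )=O(1)$, \emph{uniformly in $D$} (the high Heegner points being exactly counterbalanced by their scarcity). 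Passing from $g_{\mathrm{hyp}}$ to $g_{\mu_0}$ by the Michel--Ullmo comparison between $\mu_0$ and the hyperbolic probability measure (comparison function $F(z)=O(\log p/p)$) together with Bruin's bound from Remark~\ref{CorrectOrder}, one controls $g_{\mu_0}(\cdot ,\infty )-g_{\mathrm{hyp}}(\cdot ,\infty )$ by $O(p)$ on $X_0 (p)(\C )$, so $\tfrac1{[F:\Q ]}\sum_\sigma g_{\mu_0}(P_\sigma ,\infty_\sigma )=O(p)$, yielding $\height_{\Theta}(P_D-\infty +\tfrac12\omega^0 )=O(p^2)$. (Alternatively, invoking effective equidistribution of Heegner points together with the normalization $\int g_{\mu_0}(\cdot ,\infty )\mu_0 =0$ of~(\ref{normalizing}) makes the average $O(1)$ once $|D|$ is large in terms of $p$, of which there are still infinitely many, giving the even sharper $O(\log p)$.)

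The main obstacle is precisely this last comparison: the error term between the Arakelov--Green function $g_{\mu_0}(\cdot ,\infty )$ and the hyperbolic one must be controlled with explicit dependence on the level $p$ --- equivalently one needs a lower bound $\inf_x g_{\mu_0}(x,\infty )\geq -O(p)$, sharper than Bruin's $O(p^2)$ \emph{sup}-bound --- and, if one wants the route via equidistribution, the rate must be uniform in $p$, which forces appeal to the level-aspect subconvex estimates already used elsewhere in this paper. Along the way one also has to confirm that the conjugate CM points do not enter the cuspidal $q$-disc too deeply (a lower bound on $\Im P_\sigma$, i.e.\ an upper bound on $|j(P_\sigma)|_\infty$ from reduction theory of CM points), which is what makes the logarithmic singularity of $g_{\mu_0}(\cdot ,\infty )$ harmless in the average.
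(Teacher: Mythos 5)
Your route is genuinely different from the paper's, and the difference matters. The paper does not use Heegner points at all: it takes the $j$-invariants of the $P_i$ to be roots of unity (primitive $\ell_i$-th roots for primes $\ell_i>p^2-1$, so that they cannot be congruent to a supersingular invariant, which is quadratic over $\F_p$). Such points are automatically integral and everywhere ordinary, and — this is the whole point — they satisfy $\vert j(P_i)\vert_\sigma=1$ at \emph{every} archimedean place, so all conjugates lie in a fixed compact part of the fundamental domain ($\vert q_\tau\vert$ absolutely bounded, via~\cite{BP11b}). One then applies Merkl's two-sided pointwise bound (Proposition~10.13 of~\cite{Me11}, as exploited in Theorem~11.3.1 of~\cite{EJ11c}) to get $\vert g_{\mu_0}(\infty ,\sigma (P_i))\vert =O(p^2)$ for each conjugate separately, and the same reduction to $[P_i,\tilde\omega_\Theta]_{\mu_0}$ that you perform finishes the proof. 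No averaging, no equidistribution, no class-number input. Your choice of CM points keeps the integrality and ordinarity but destroys the archimedean control: the conjugate attached to the principal form has $\Im\tau=\sqrt{\vert D\vert}/2$, so individual conjugates go arbitrarily deep into the cusp and a pointwise bound is impossible; that is what forces you into the averaging machinery.

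Within that machinery there are two genuine gaps. First, the asserted identity $\sum_{[a,b,c]}1/a\asymp L(1,\chi_D)$ is not correct: the principal form alone gives $\sum 1/a\geq 1$ while $L(1,\chi_D)$ can be as small as a power of $1/\log\vert D\vert$, and the correct upper bound $\sum 1/a\ll (\log\vert D\vert)^2$ combined with $h(D)\asymp\sqrt{\vert D\vert}\,L(1,\chi_D)$ gives an average of $\Im\tau_Q$ of size $(\log\vert D\vert)^2/L(1,\chi_D)$, which is \emph{not} $O(1)$ uniformly in $D$ (Siegel zeros). This is repairable — restrict to the positive proportion of split discriminants with $L(1,\chi_D)\gg 1$ — but as written the uniformity claim fails. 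Second, and as you yourself flag, the transfer from $g_{\mathrm{hyp}}(\cdot ,\infty )$ to $g_{\mu_0}(\cdot ,\infty )$ at points near the cusp with a polynomial-in-$p$ error is exactly the unproved crux: Bruin's theorem is a one-sided \emph{sup} bound of size $O(p^2)$, and the comparison function $F(z)=O(\log p/p)$ of~\cite{MU98} is invoked in this paper only heuristically (``it seems that''), so neither reference delivers the two-sided comparison you need. The lesson of the paper's proof is that one can sidestep this entirely by choosing the points so that no conjugate ever approaches the cusp.
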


\begin{proof}
Let $(\zeta_i )_\N$ be a infinite sequence of roots of unity. One can assume none are congruent to some supersingular $j$-invariant in 
characteristic $p$, modulo any place of $\overline{\Q}$ above $p$. (Indeed, as the supersingular $j$-invariants are quadratic over $\F_p$, 
it is enough for instance to choose for the $\zeta_i$ some primitive $\ell_i$-roots of unity, 
with $\ell_i$ running through the set of primes larger than $p^2 -1$.) 
Lift each $j$-invariant equal to $\zeta_i$ to some point $P_i$ in $X_0 (p)(\overline{\Q})$.  By construction, this makes a sequence of points 
with $j$-height $\height_j (P_i )$ equal to $0$. As for their (normalized) theta height one sees from Theorem~\ref{ThetaZhang} that %
$$
{\height}_\Theta (P_i -\infty +\frac{1}{2} \omega^0 ) = \frac{1}{[K(P_i ):\Q ]} [P_i ,\tilde{\omega}_{\Theta} ]_{\mu_0} =  \frac{-1}{[K(P_i ) :\Q ]} 
\sum_{\sigma \colon K (P_i )\hookrightarrow \C} g\cdot g_{\mu_0} (\infty ,\sigma (P_i )) +O(\log p) 
$$
as the contribution at finite places of $[P_i ,\infty ]$ is $0$.
It is therefore enough to bound the $\vert g_{\mu_0} (\infty ,\sigma (P_i)) \vert$. 

  Now $\vert j(P_i )\vert_\sigma =1$ for all $\sigma \colon K (P_i )\hookrightarrow \C$, so the corresponding elements 
$\tau$ in the usual fundamental domain in Poincar\'e upper half-plane for $X_0 (p)$ or $X (p)$ are absolutely bounded, 
and the same for the absolute values of $q_\tau =e^{2i\pi \tau}$. (For a useless explicit estimate of this bound, one 
can check Corollary~2.2 of~\cite{BP11b} which proposes $\vert q_\tau \vert \geq e^{-2500}$.)  From this, running 
through the proof of Theorem~11.3.1 of~\cite{EJ11c}, and adapting it to the case of $X_0 (p)$ instead of $X_1 (pl)$, we 
deduce that the $\sigma (P_i )$ do not belong to the
open neighborhood, in the atlas of loc. cit., of the cusp $\infty$ in $X_0 (p) (\C )$. Therefore Proposition~10.13 
of~\cite{Me11} applies and gives, with notations of that work,
\begin{eqnarray}
\vert g_{\mu_0} (\infty ,\sigma (P_i ))  \vert = \vert g_{\mu_0} (\infty ,\sigma (P_i ))  -h_\infty (\sigma (P_i )) \vert =O(p^2 )
\end{eqnarray}
(see Theorem~11.3.1 of~\cite{EJ11c} and its proof). $\Box$
\end{proof}

\medskip

\begin{lem}
\label{lacompacte}
Let $s_{\theta}$ be the ``theta function over $\Z$'', that is, the global section introduced just before~(\ref{constanteC}). 
One has:
\begin{eqnarray}
\label{supTheta2}
\sup_{J_0 (p) (\C )} (\log \| s_{\theta} \|_{\mathrm{cub}} )\leq  O(p\log p ).
\end{eqnarray}
\end{lem}
\begin{proof}

Writing $s_{{\theta}, \C} (z) =C_\vartheta\cdot \theta(z+\kappa )$ as in~(\ref{constanteC}), we shall bound from above
both $\vert C_\vartheta\vert$ and the contribution of the difference between cubist and analytic metrics. Then we will use upper
bounds for the analytic norm of the theta function due to P.~Autissier and proven in the Appendix of the present paper. 

 We invoke again some key arguments of the proof of Proposition~\ref{padicmetric0}. For $D$ 
in $J_0 (p)(\C )$, written as the linear equivalence class of some divisor $\sum_{i=1}^g (P_i -\infty )$ on $X_0 (p)(\C )$, we indeed 
once more consider the embedding
$$
\iota_{\kappa -D} \colon \left\{ 
\begin{array}{rcl}
X_0 (p) & \hookrightarrow & J_0 (p) \\
P & \mapsto & {\mathrm{cl}} (P-\infty +\kappa -D ) 
\end{array}
\right.
$$
as in Proposition~\ref{padicmetric0}. For such a $D$ whose $P_i$ are assumed to belong to $X_0 (p)(\overline{\Q})$, we recall~(\ref{ThetaDkappa1}) that
$$
\height_\Theta (\iota_{\kappa -D} (P)) =\frac{1}{[K(P,D) :\Q ]} [P,\sum_i P_i  +\Phi_D +c_D X_\infty ]_{\mu_0} .
$$
If the $P_i$ all have everywhere ordinary reduction, as will be the case in~(\ref{logC}) below, the vertical divisor~$\Phi_D$ will contribute 
at most $O(\log p)$ to the height of points (see Remark~\ref{CoeffPhi}).  

   Note that we can fulfill condition~(\ref{RR}) considering only points $P_i$ of same type as occurring in Lemma~\ref{LeNouvelEssentiel} 
(which, in particular, are ordinary and have integral $j$-invariants), because those $P_i$ make a Zariski-dense subset of $X_0 (p) 
(\overline{\Q}) $ (and the onto-ness of the map $X_0 (p)^{(g)} 
\stackrel{\iota_\infty^g}{\twoheadrightarrow} J_0 (p)$). We therefore conclude as in the proof of Proposition~\ref{padicmetric0} 
that  ${\mathrm{div}} ( \iota_{\kappa -D}^* (s_\theta ))$ has indeed to be $(\sum_i P_i  +\Phi_D )$ on $X_0 (p)_{{\cal O}_K}^{\mathrm{smooth}}
$.\footnote{Although we shall not use this, one can check that $\height_\Theta (\iota_{\kappa -D} (\infty )) 
=\| -(\sum_i P_i -\infty ) +\frac{1}{2} \omega^0 \|^2_\Theta =O(p^5 ) $ by Lemma~\ref{LeNouvelEssentiel} 
and~(\ref{tailleOmega}).}

  On the other hand, for some of those choices of $(P_i )_{1\leq i\leq g}$, our $\Z$-theta function $s_{\theta}$ does not vanish 
at $\iota_{\kappa -D} (\infty ) (\C )$, so $\height_\Theta (\iota_{\kappa -D} (\infty ))$ can also be computed as the Arakelov degree:
$$
\height_\Theta (\iota_{\kappa -D} (\infty ))=\widehat{\deg} (\infty^* \iota_{\kappa -D}^* (  {\cal L} (\Theta ) )) . %
$$ 
Integrality of the $P_i$ shows the intersection numbers $[\infty ,P_i]$ have trivial non-archimedean contribution. The only finite contribution 
to our Arakelov degree therefore comes from intersection with vertical components, that is, if $K_D$ is a sufficiently large field, over 
which $D$ is defined, then for a set of elements $(z_{\sigma} )_{\sigma \colon K_D \hookrightarrow {\overline{\Q}}}$ which lift $\sigma (-D)$ 
in the complex tangent space of $J_0 (p)$ to $0$ one has:
\begin{eqnarray}
\height_\Theta (\iota_{\kappa -D} (\infty )) & = & \widehat{\deg} (0_{{\cal J}_0 (p)}^* (t^*_{\kappa -D} {\cal L} (\Theta ) )) =\widehat{\deg} 
(0_{{\cal J}_0 (p)}^* (t^*_{-D} {\cal L} (\Theta_\kappa ) )) \nonumber \\
 & = &   -\frac{1}{[K_D :\Q ]} \sum_{ K_D \stackrel{\sigma}{\hookrightarrow} \C} \log \| s_{\theta} (z_\sigma ) \|_{\mathrm{cub}}   +
 O(\log p) \nonumber
\end{eqnarray}
whence as $s_{{\theta}, \C} (z) =C_\vartheta\cdot \theta (z+\kappa )$:
\begin{eqnarray}
\label{logC}
\log \vert C_\vartheta\vert = -\height_\Theta (\iota_{\kappa -D} (\infty ))  -\frac{1}{[K_D (\kappa ) :\Q ]} \sum_{ K_D (\kappa ) 
\stackrel{\sigma}{\hookrightarrow} \C} \log \|  \theta ((z +\kappa)_\sigma ) \|_{\mathrm{cub}} +O(\log p).
\end{eqnarray}
Following~\cite{GR11}, paragraph~8, we now write $J_0 (p)(\C ) =\C^g /(\Z^g +\tau \Z^g )$ for $\tau$ in Siegel's fundamental domain, 
write $z \in \C^g$ as $z=\tau \cdot p+q$ for $p,q \in \R^g$, and introduce the function $F\colon \C^g \to \C$ defined as
$$ 
F(z)= \det (2\Im (z))^{1/4} \sum_{n\in \Z^g} \exp ({i\pi {}^t (n+p)\tau (n+p) +2i\pi {}^t nq}) .
$$
One then has $\vert F(z) \vert =2^{g/4} \| \theta (z) \|_{\mathrm{an}}$. %
Indeed there is a constant $A\in \R^*_+$ such that $\vert F(z)\vert =A\cdot \| \theta (z) \|_{\mathrm{an}}$ (see the end of proof of Lemma~8.3 
of \cite{GR11}), $\int_{J_0 (p)(\C )} \vert F \vert^2 d\nu =1$ (where $d\nu$ is the probability Haar measure on $J_0 (p)(\C )$; see~\cite{GR11}, 
Lemma~8.2 (1)), and $\int_{J_0 (p)(\C )}\| \theta (z) \|_{\mathrm{an}}^2 d\nu =2^{-g/2}$ (see e.g~\cite{Mo90}, (3.2.1) and (3.2.2)). Therefore
Lemme~8.3 of \cite{GR11} gives, using definitions of loc. cit., Th\'eor\`eme~8.1,
$$
-\frac{1}{[K_D (\kappa ) :\Q ]} \sum_{ K_D (\kappa ) \stackrel{\sigma}{\hookrightarrow} \C} \left( \log \|  \theta ((z +\kappa)_\sigma ) 
\|_{\mathrm{an}} +\frac{g}{4} \log 2 \right)  \leq   \height_\Theta (\iota_{\kappa -D} (\infty )) +\frac{1}{2}  \height_F (J_0 (p))  + \frac{g}{4} \log 2\pi  .  \nonumber 
$$
Remember Faltings height of $J_0 (p)$ is known to satisfy $\height_F (J_0 (p)) =O(p\log p)$ by \cite{Ul00}, Th\'eor\`eme~1.2.
(We remark that Ullmo's normalization of Faltings' height differs from that of Gaudron-R\'emond, but the difference term is linear in $g=O(p)$
so the bound $O(p\log p)$ remains valid for the above $\height_F (J_0 (p)) $). Writing $\| \cdot \|_{\mathrm{cub}} =e^\varphi \| \cdot \|_{\mathrm{an}}$ 
we therefore see that (\ref{logC}) implies
$$
\log \vert C_\vartheta\vert  +\varphi \leq  \frac{1}{2} \height_F (J_0 (p))  + O(p)  \leq O(p\log p).
$$

   Given this upper bound for $e^\varphi \vert C_\vartheta\vert$ we can now go the other way round to derive an upper bound for 
$\| s_{\theta}\|_{\mathrm{cub}} =C_\vartheta\cdot \| \theta(z+\kappa ) \|_{\mathrm{cub}}$, by using 
estimates for analytic theta functions. For any principally polarized complex abelian variety whose complex invariant $\tau$ is chosen 
within Siegel's fundamental domain $F_g$, Autissier's result in the Appendix (Proposition~\ref{proposition8.1} below) indeed gives, 
with notations as in~(\ref{normtheta}), that:
\begin{eqnarray}
\label{PascalSup}
\frac{1}{\det ({\Im (\tau )})^{1/4}} \| \theta (z) \|_{\mathrm{an}} =\exp (-\pi y \, \Im (\tau )^{-1} y) \vert \theta (z )\vert \leq g^{g/2} .
\end{eqnarray}
We refer to the Appendix for a bound which is even slightly sharper.\footnote{Works of Igusa and Edixhoven-de Jong  
(\cite{EJ11c}, pp.~231-232) give $\frac{1}{\det ({\Im (\tau )})^{1/4}} \| \theta (z) \|_{\mathrm{an}}  \leq 2^{3g^3 +5g} .$
} 

As for the factor $\det ({\Im (\tau )})^{1/4}$, Lemma~11.2.2 of~\cite{EJ11c} gives the general result:  
$$
\det (\Im (z) )^{1/2} \leq \frac{(2g)! V_{2g}}{2^g V_g} \prod_{g+1 \leq i\leq 2g} \lambda_i
$$
where for any $k$ we write $V_k$ for the volume of the unit ball in $\R^k$ endowed with its standard Euclidean structure, and 
the $\lambda_r$ are the successive minima, relative to the Riemann form, of the lattice $\Lambda =\Z^g +\tau \cdot \Z^g$. To bound 
the $\lambda_i$ we need to invoke an avatar of loc. cit., Lemma~11.2.3. But the very same proof shows that for any integer $N$, the 
group $\Gamma_0 (N)$ has a set of generators having entries of absolute value less or equal to the very same bound $N^6 /4$. (That term 
could be improved, but this would have an invisible impact on the final bounds so we here content ourselves with it.) We can therefore rewrite 
the proof of Lemma~11.2.4 verbatim.  
This gives that $\Lambda$ is generated by elements having naive hermitian 
norm $\| x\|_E^2$ less or equal to $g p^{46}$. Finally, in our case the Gram matrix is diagonal (no $2\times 2$-blocks, at the difference of 
Lemma~11.1.4 of loc. cit.) so Lemma~11.2.5 a fortiori holds: if $\| \cdot \|_P$ denotes the hermitian product on $\C^g$ induced by the 
polarization, $\| \cdot \|^2_P \leq \frac{e^{4\pi}}{\pi} \| \cdot \|^2 _E$. This allows to conclude as in p.~228 of~\cite{EJ11c}: 
$$
(\prod_{i=g+1}^{2g} \lambda_i )^2 \leq (\frac{e^{4\pi}}{\pi} g p^{46 } )^g
$$
so that 
$$
\log (\det ({\Im (\tau )})) \le O (p \log p )  
$$
and combining with (\ref{PascalSup}),
$$
\log \| \theta (z)\|_{\mathrm{an}} \le O(p\log p) .
$$
Putting everything together finally yields:
\begin{eqnarray}
\sup_{z\in J_0 (p)(\C )} \log \| s_{{\theta}, \C} (z) \|_{\mathrm{cub}}  &=&\sup_{z\in J_0 (p)(\C )}  \log \| C_\vartheta\cdot \theta (z+\kappa ) \|_{\mathrm{cub}} \nonumber \\
 & = & \left( \log |C_\vartheta | +\varphi \right) +\sup_{z\in J_0 (p)(\C )}  \log \| \theta (z+\kappa ) \|_{\mathrm{an}} \leq O(p\log p). \hspace{0.6cm} \Box \nonumber
\end{eqnarray}
\end{proof}

\medskip

\begin{lem}
\label{lespetites}
Assume the same hypothesis and notations as in Definition~\ref{GoodModels}. 
After possibly making some finite base extension one can pick a set ${\cal S}$ in $H^0 ({\cal N}_{J,4} , {\cal L}(\Theta )^{\otimes 4} )$ 
of $4^{g}$ global sections $(s_{i} )_{1\leq i\leq 4^{g}}$, which span ${\cal L}(\Theta )^{\otimes 4}$ on ${\cal N}_{J,4} [1/2p]$,
and verify

\begin{eqnarray}
\label{petitpetit}
\sup_{{J_0 (p)}} (\log \| s_{i} \|_{\mathrm{cub}} ) \leq  O(p \log p ) .
\end{eqnarray} 
\end{lem}

\begin{proof}
We fix $N=r^2 =4$ for the construction of a good model as in Definition~\ref{GoodModels}. Up to making a base
extension, we can assume ${{L}(\Theta )}^{\otimes 4}$ and
$[2 ]^* {{L}(\Theta )}$ have cubist extensions ${\cal L}(\Theta )^{\otimes 4}$ and $[2]^* {\cal L}
(\Theta )$) on ${\cal N}_{J,4}$, respectively. 
As $\Theta$ is symmetric one knows there is an isomorphism $[2 ]^* {{\cal L}(\Theta )} \to 
{{\cal L}(\Theta )}^{\otimes 4}$ which actually is an isometry (\cite{Pa12}, 
Proposition~5.1), by which we identify those two objects from now on. On the other hand,  
every element $x$ of $J_0 (p)[4 ] (\overline{\Q} )=J_0 (p)[4 ] (K)$ defines a section $\tilde{x}$ in 
${\cal N}_{J,4} ({\mathrm{Spec}} ({\cal O}_K ))$. 
Letting $t_{\tilde{x}}$ denote the translation by $\tilde{x}$ on ${\cal N}_{J,4}$ we have 
\begin{eqnarray}
\label{isometrie}
t_{\tilde{x}}^* {{\cal L}(\Theta )}^{\otimes 4} \simeq {{\cal L}(\Theta )}^{\otimes 4} . 
\end{eqnarray}
(This is indeed true over $\C$ by Lemma 2.4.7.c) of~\cite{BL04}, hence over $K$, then over ${\mathrm{Spec}} 
({\cal O}_K )$ by uniqueness of cubist extensions.)
The interpretation as N\'eron-Tate heights shows that as ${{\cal L}(\Theta )}$ is endowed with its cubist 
metric, 
this isomorphism even is an isometry. Recall the section $s_{{\cal M}}$ defined in~(\ref{laSM}), belonging to $H^0 ({\cal N}_{J,2} ,[2 ]^*{{\cal L}(\Theta )} ) $.
Up to making an extension to some larger base ring of integer, we may assume  $s_{{\cal M}}$ extends as a 
meromorphic section on ${\cal N}_{J,4 }$ and Proposition~\ref{padicmetric0}, which gives estimates on the poles of $s_{{\cal J}^0}$ at bad components, 
implies that  $s_{{\cal M}}$ is actually holomorphic (has no pole on the new components) after multiplication by some power $C_1$ of $p$ with $\log C_1 =O(p\log p )$. 
%
%
We can therefore define a set $(s_i )_{1\le i\le 4^{g}}$ in $H^0 ({\cal N}_{J,4} ,[2 ]^*{{\cal L}(\Theta )} )$ 
made of $4^{g}$ elements of shape 
\begin{eqnarray}
\label{carrement}
s_i := t^*_{\tilde{x}_i} C_1 \cdot s_{\cal M} %
\end{eqnarray}
for $\tilde{x}_{i}$ running through a set of representatives, in $J_0 (p)[4 ](K)$, of $J_0 (p)[4 ]/J_0 (p)[2  ]$. Note that one can explicitly 
lift $s_{\cal M}$ on the complex tangent space at $0$ of $J_0 (p)(\C )$ as 
\begin{eqnarray}
\label{laSMcomplexe}
s_{{\cal M},\C} (z) =C_{\vartheta} \cdot \theta (2 \cdot z)
\end{eqnarray} 
where $C_{\vartheta}$ is defined in the proof of~Lemma~\ref{lacompacte} and the $s_{i,\C}$ are constant multiple of the basis denoted by ${\mathrm h}_{\vec a,\vec b} 
(\vec z)$ in~\cite{Mu84}, Proposition~II.1.3.iii) on p.~124\footnote{where it seems by the way that the expression ``${\mathrm h}_{\vec a,\vec b} (\vec z)=\vartheta 
[{{\vec a}/k \atop {\vec b}/k}] (\ell \cdot \vec z, \Omega )$'' should read ``$\dots =\vartheta [{{\vec a}/k \atop {\vec b}/k}] (k \cdot \vec z ,\Omega )$'' (notations of loc. cit.).}. 
From here, Lemma~\ref{lacompacte} and Proposition~\ref{padicmetric0} give~(\ref{petitpetit}).

By the theory of theta functions (\cite{Pa12}, Proposition~2.5 and its proof, \cite{Mu66} 
and \cite{MB86},  Chapitre VI) the $s_i$ make a generic basis of global sections, which span ${{\cal L}(\Theta 
)}^{\otimes 4}$ over ${\mathrm{Spec}} ({\cal O}_K [1/2p])$.
\hspace{2cm} $\Box$
\end{proof}
\medskip

\begin{lem}
\label{degreeheight}
Let $V$ and $W$ be two closed $K$-subvarieties, with dimension $d_V$ and $d_W$ respectively, of a smooth projective variety $A$ 
over a number field $K$, endowed with an ample sheaf $M$. Assume the flat projective scheme $({\cal A}, {\cal M})$ over ${\mathrm{Spec}} 
({\cal O}_K )$, with $\cal M$ an hermitian sheaf on ${\cal A}$, is a model for $(A,M)$. Let ${\cal V}$ and ${\cal W}$ be the Zariski closure in ${\cal A}$ 
of $V$ and $W$ respectively. Then, with definitions as in~\cite{BGS94}, Section~3.1, 
\begin{eqnarray}
\label{degree}
(c_1 ({ M}^{\boxtimes 2} )^{d_V +d_W} \vert (V\times W)) = \left(  {{d_V +d_W} \atop {d_V}}  \right) ( c_1 
({ M})^{d_V} \vert V) ( c_1 ({M})^{d_W } \vert  W)
\end{eqnarray}
and 
\begin{eqnarray}
\label{height2}
(\hat{c}_1 ({\cal M}^{\boxtimes 2} )^{d_V +d_W +1} \vert {\cal V}\times {\cal W}) 
& = & \left(  {{d_V +d_W +1} \atop {d_V}} \right) (c_1 ({M})^{d_V} \vert {V}) \ 
 (\hat{c}_1 ({\cal M})^{d_W +1} \vert {\cal W}) +   \nonumber \\
& &  \hspace{0.5cm} \left(  {{d_V +d_W +1} \atop {d_W}} \right) (\hat{c}_1 ({\cal M})^{d_V +1} \vert {\cal V})  (c_1 ({ M})^{d_W} \vert {W} ) . 
\end{eqnarray}
\end{lem}

\begin{rema}
{\rm Equation~(\ref{degree}) can be read as
$$
\deg_{{M}^{\boxtimes 2}} (V \times W )  =\left(  {{d_V +d_W} \atop {d_V}}  \right) \deg_{M} (V) \deg_{M}( W)
$$
Equation~(\ref{height2}) in turn fits with Zhang's interpretation (\ref{ZhangNT}) in terms of 
essential minima, compare the proof of Proposition~\ref{Bezout} below.}
\end{rema}

\begin{proof} {\bf (of Lemma~\ref{degreeheight}).}
For (\ref{degree}), one can realize it is elementary, or refer to Lemme 2.2 of \cite{Re10}, or proceed 
as follows. Using (2.3.18), (2.3.19), and Proposition 3.2.1, (iii) of~\cite{BGS94}, and noticing 
$$
{c}_1 ({ M}^{\boxtimes 2})={c}_1 ({M})\times {\bf 1} +{\bf 1}\times {c}_1 ({ M})
$$ 
(and same with $\hat{c}_1 ({\cal M})$ and $\hat{c}_1 ({\cal M}^{\boxtimes 2})$ instead) one computes
\begin{eqnarray}
(c_1 ({  M}^{\boxtimes 2} )^{d_V +d_W} \vert (V\times W))  &= &   ( \sum_{k=0}^{d_V +d_W} \left(  {{d_V +d_W} 
\atop {k}}  \right) c_1 ({  M})^k \times  c_1 ({  M})^{d_V +d_W -k} \vert V\times W)  \nonumber \\
 &= & \sum_{k=0}^{d_V +d_W} \left(  {{d_V +d_W} \atop {k}}  \right) (c_1 ({  M})^k \times  c_1 ({  M})^{d_V 
 +d_W -k} \vert V\times W)  \nonumber \\
  &= & \sum_{k=0}^{d_V +d_W} \left(  {{d_V +d_W} \atop {k}}  \right) (c_1 ({  M})^k \vert V) 
(c_1 ({  M})^{d_V +d_W -k}  \vert  W) \nonumber \\
 &= & \left(  {{d_V +d_W} \atop {d_V}}  \right) ( c_1 ({  M})^{d_V} \vert V) ( c_1 ({  M})^{d_W } \vert  
 W) \nonumber
\end{eqnarray}
where the last equality comes from the fact that the only nonzero term in the line before occurs for $k=d_V$. 

\medskip 
 
 An analogous computation, using~\cite{BGS94}, (2.3.19), can be used for the arithmetic degree: 
\begin{eqnarray}
(\hat{c}_1 ({\cal M}^{\boxtimes 2} )^{d_V +d_W +1} \vert {\cal V}\times {\cal W}) 
& = &  \sum_{k=0}^{d_V +d_W +1} \left(  {{d_V +d_W +1} \atop {k}}  \right)(\hat{c}_1 ({\cal M})^k \times  
\hat{c}_1 ({\cal M})^{d_V +d_W +1 -k} \vert  {\cal V}\times {\cal W}) \nonumber \\
 & = & \left(  {{d_V +d_W +1} \atop {d_V}} \right) (c_1 ({M})^{d_V} \vert {V}) \ 
 (\hat{c}_1 ({\cal M})^{d_W +1} \vert {\cal W}) +   \nonumber \\
& &  \hspace{1.7cm} \left(  {{d_V +d_W +1} \atop {d_W}} \right) (\hat{c}_1 ({\cal M})^{d_V +1} \vert {\cal V})  (c_1 ({ M})^{d_W} \vert {W} ) . \hspace{0.3cm} \Box \nonumber  
\end{eqnarray}
\end{proof}

\bigskip

For the rest of this Section we fix the model $({\cal J} ,{\cal M})$ for $(J_0 (p), \Theta )$ 
(see~(\ref{Bezout0})) as the one built with the set $\cal S$ of $N^{g} =4^g$ sections provided by 
Lemma~\ref{lespetites}. Before settling the proof of the arithmetic B\'ezout theorem, we need a last 
lemma on comparison between the projective height  on $({\cal J} ,{\cal M})$ and its normalized 
N\'eron-Tate avatar.

\begin{lem}
\label{n'estpasraison}
Up to translation by torsion points, the projective height $\height_{\cal M}$ on points in
$J_0 (p)(\overline{\Q})$ (associated with the good model $({\cal J}, {\cal M})$) differs from the 
N\'eron-Tate theta-height $4{\height}_\Theta$ by an error term of shape $O (p\log p)$. 
\end{lem}

\begin{proof}

Lemma~\ref{lespetites} implies 
that the elements of ${\cal S}$ extend as holomorphic sections 
to {\it any} component of the N\'eron model ${\overline{\cal N}}$ of $J_0 (p)$ over $\overline{\Z}$ (see (\ref{carrement})).
As remarked in the proof of Lemma~\ref{lespetites}, Mumford's algebraic theory of theta-functions implies 
that the sections in ${\cal S}$ do define a projective embedding of ${\overline{\cal N}}$  
over $\overline{\Z} [1/2p]$: the only fibers of ${\overline{\cal N}}$ over $\overline{\Z}$ where base points for ${\cal S}$ can 
show up are above $2$ and $p$. If one seeks to approximate 
the N\'eron-Tate height of a given point $P$ in $J_0 (p)(\overline{\Q} )$ by the projective height of our 
good model $({\cal J} ,{\cal M})$, one needs the section of the N\'eron model ${\overline{\cal N}}$ defined 
by $P$ to avoid those base points, or at least control their length.

Given $P$ in $J_0 (p)(\overline{\Q} )$, we claim one can translate $P$ by some torsion point in $J_0 (p)
(\overline{\Q} )$ so that the translated new point $P+t$ does avoid base points in characteristic $2$. 
Indeed, choose a Galois extension $F/\Q$ such that the base locus is defined over ${\mathrm{Spec}} ({\cal O}_F \otimes \F_2 )$. Summing-up, as divisors, 
all the Galois conjugates of that base locus in each fiber of characteristic $2$, one obtains a constant cycle $C_{\kappa}$, in each fiber at $\kappa$, which is 
defined over $\F_2$. (In our case one actually could have taken $F=\Q$.) Density of torsion points then shows that  one can replace our point $P$ by $P+t$, 
for some torsion point $t$, such that $P+t$ does not belong to $C_{\kappa_0}$ for some $\kappa_0$, then for all $\kappa$ of characteristic $2$ because $C_\kappa$ 
is constant. This proves our claim. Now in characteristic $p$, we know from Proposition~\ref{padicmetric0} again that possible base points have length at most $O(p)$, 
which gives an estimate of size $O(p\log p)$ for  the difference error term between projective height on ${\cal J}$ and N\'eron-Tate height (\cite{Pa12}, 
Proposition~4.1). \hspace{13cm} $\Box$

\end{proof}

\medskip

\begin{proof} {\bf of Proposition~\ref{Bezout}}.
Before proceeding we will allow ourselves, for this proof only, and in the hope not to weighten too much 
the computations, to work with heights defined as in~\cite{BGS94}, Section~3.1. Namely, for ${\cal Y}$ a
cycle of dimension $(d+1)$ in a regular arithmetic variety endowed with a hermitian sheaf ${\cal F}$, 
we multiply our definition~(\ref{lavraiehauteur}) of its height by degree and absolute 
dimension and we set:
$$
{\height'}_{\cal F} ({\cal Y})  =\frac{(\hat{c}_1 ({\cal F} )^{d+1} \vert {\cal Y} )}{[K:\Q ]}.
$$
Note that ${\height}$ and ${\height'}$ coincide on $K$-rational points, in which case we might use either notation.

Construction~(\ref{Bezout0}) gives a $\Q$-embedding $V\times W \stackrel{\iota}{\hookrightarrow} 
\PPP^{n^2 +2n}$ via a Segre map. We set 
$$
s_{\underline{i},\underline{j}} :=\iota^* (z_{\underline{i},\underline{j}} -z_{\underline{j},\underline{i}} )
$$
for all $({\underline{i},\underline{j}} )$, and denote by ${\cal O}_{N}$ the ambiant line bundle $\iota^* ({\cal O}_{\PPP^{n^2 +2n}} (1)) ={{\cal M}^{\boxtimes 2}}$ 
as before~(\ref{mDeux}). (Recall we will eventually specialize to $N=4$.) 
Set also $O_{N} :={\cal O}_{N} \otimes \Q$. We intersect $\iota (V\times W)$ with one of the $\mathrm{div}  (z_{\underline{i_0},\underline{j_0}} 
-z_{\underline{j_0},\underline{i_0}} )_\Q $ such that the two cycles meet properly: define 
$$
J_1 =\mathrm{div} ({s_{\underline{i_0},\underline{j_0}}}_\Q ) \cap (V \times W )
$$
in the generic fiber $(J_0 (p)\times J_0 (p))_\Q$. As $\mathrm{div}  (z_{\underline{i_0},\underline{j_0}} -z_{\underline{j_0},\underline{i_0}} )$ is a projective hyperplane we have 
by definition 
$$
\deg_{{O}_{N}} (J_1 ) =\deg_{{O}_{N}} (V \times W ) .
$$
For the same linearity reason, a similar statement is true for heights. 
Indeed, let ${\cal V}$ and ${\cal W}$ denote the schematic closure  in ${\cal J}$ 
of $V$ and $W$ respectively, and ${\cal J}_1$ the schematic closure of $J_1$ in ${\cal J}\times {\cal J}$, which satisfies  
$$
{\height'}_{{\cal O}_{N}} ({\cal J}_1 )   \leq {\height'}_{{\cal O}_{N}} (\mathrm{div} ({s_{\underline{i_0},
\underline{j_0}}} ) \cap ({\cal V}\times {\cal W})) 
$$
(as there might be vertical components in the intersection of the right-hand side which do not 
intervene in the left, and contribute positively to the height).

   Proposition~3.2.1~(iv)  of \cite{BGS94} gives, with notations of loc. cit., that:
\begin{eqnarray}
\label{inductionBezout}
{\height'}_{{\cal O}_{N}} (\mathrm{div} ({s_{\underline{i_0},\underline{j_0}}} ) \cap ({\cal V}\times {\cal W})) & = & {\height'}_{{\cal O}_{N}} ( {\cal V}\times {\cal W} ) \nonumber \\
 & & +  \frac{1}{[K:\Q ]} \sum_{\sigma \colon K\hookrightarrow \C } \int_{(V \times W)_\sigma (\C )} \log \| {s_{\underline{i_0},\underline{j_0}}}_\C  \|  
  c_1 ({{\cal O}_{N}})^{d_V +d_W}  \hspace{0.7cm}
\end{eqnarray}
where  $\| \cdot \| =\| \cdot \|_{\mathrm{cub}}$ shall denote  the cubist metric, or the metric induced 
by the cubist metric on products or powers of relevant sheaves.  To estimate the last integral we note that at any point of $( V\times W)_\sigma  
(\C )$ and for any $(\underline{i},\underline{j})$, 
\begin{eqnarray}
\| s_{\underline{i},\underline{j}}  \| & = & \| z_{\underline{i},\underline{j}}  -z_{\underline{j},
\underline{i}}    \|_{{\cal M }^{\boxtimes 2}}  \leq  \| 
z_{\underline{i},\underline{j}} \|_{{\cal M}^{\boxtimes 2}} +\| z_{\underline{j},\underline{i}} 
\|_{{\cal M}^{\boxtimes 2}} \nonumber  \\
    & \le & \| x_{\underline{i}} \|_{\cal M} \| y_{\underline{j}}  \|_{\cal M}  + \| x_{\underline{j}} 
    \|_{\cal M}  
    \| y_{\underline{i}} \|_{\cal M}  \leq 2 (\sup_{\underline{i}} \| x_{\underline{i}} \|_{\cal M} )^2  
    \nonumber \\
     & \leq & \exp ({2  \log (\sup {\| s_{i} \|_{\mathrm{cub}}}  ) +\log 2}) \nonumber 
\end{eqnarray}
with notations of Lemma~\ref{lespetites}. Setting $M_{{\cal J} ,{\cal M}} = \log (\sup {\| s_{i} \|_{\mathrm{cub}}} )$ we obtain
$$
{{\height'}}_{{\cal O}_{N}} ({\cal J}_1 ) \leq {{\height'}}_{{\cal O}_{N_0 }} ( {\cal V}\times {\cal W} ) 
+(2 M_{{\cal J} ,{\cal M}} +\log 2) \deg_{{\cal O}_{N}} (V\times W ) . \nonumber 
$$
Call $I_1$ one of the reduced irreducible components of $J_1$ containing the point $\iota (\Delta (P))$ 
of $V\cap W$ considered in the statement of~Proposition~\ref{Bezout}, and 
let ${\cal I}_1$ denote its Zariski closure in ${\cal J}$. It has  ${{\cal O}_{N}}$-height (and degree) 
less than or equal to those of ${\cal J}_1$, so that again 
\begin{eqnarray}
{\height'}_{{\cal O}_{N}} ({\cal I}_1 ) \leq {\height'}_{{\cal O}_{N}} ({\cal V}\times {\cal W})  +(2  
M_{{\cal J} ,{\cal M}} +\log 2) \deg_{{\cal O}_{N}} ({V}\times {W} )  \nonumber 
\end{eqnarray}
and we can iterate the process with $I_1$ in place of $V\times W$: we obtain some $J_2$, ${\cal J}_2$, $I_2$, 
${\cal I}_2$ such that
\begin{eqnarray}
{\height'}_{{\cal O}_{N}} ({\cal I}_2 ) & \leq & {\height'}_{{\cal O}_{N}} ({\cal I}_1 )  +(2  M_{{\cal J} ,
{\cal M}} +\log 2) 
\deg_{{\cal O}_{N}} (I_1 )   \nonumber \\
 &\leq & {\height'}_{{\cal O}_{N}} ({\cal V}\times {\cal W})  +2(2 M_{{\cal J} ,{\cal M}} +\log 2) 
\deg_{{\cal O}_{N}} ({V}\times {W} ) . \nonumber  
\end{eqnarray}
(The only obstruction to this step is if all the $s_{\underline{k} ,\underline{l}}$ vanish on $I_1$, which 
implies it is contained in the diagonal of ${J_0 (p)}\times {J_0 (p)}$ - so that $I_1 =\iota (\Delta (P))$ 
by construction and that means we are already done.) Processing, one builds a sequence $({\cal I}_k )$ of 
integral closed subschemes of ${\cal J}\times {\cal J}$, with decreasing dimension, such that the last step 
gives
\begin{eqnarray}
{\height'}_{{\cal O}_{N}} ({\cal I}_{d_V +d_W })  & \leq  &  {\height'}_{{\cal O}_{N}} ({\cal V}\times 
{\cal W} ) 
 +(d_V +d_W )(2 M_{{\cal J} ,{\cal M}} +\log 2) \deg_{{O}_{N}} (V\times W ) . \nonumber 
\end{eqnarray}
Now 
$$
{\height'}_{{\cal O}_{N}} ({\cal I}_{d_V +d_W }) \geq {\height'}_{{\cal O}_{N}} (\Delta (P ,P) ) =  
{\height'}_{{\cal M}^{\otimes 2}} (P ) 
={\height}_{{\cal L}^{\otimes 2N}} (P ) =2N \, {\height}_{\Theta} (P ) +O(p\log p) ,
$$
for ${\height}_{\Theta} (P )$ the N\'eron-Tate normalized theta height.
Indeed the statement of the present Proposition~\ref{Bezout} is invariant by translation of every 
object by some fixed torsion point, so that one can apply Lemma~\ref{n'estpasraison}.

   Using Lemma~\ref{degreeheight} and Corollary~\ref{LesHauteursInegales} and writing 
${\height'}_{\Theta} ({Y}) = (\dim (Y)+1)\deg_\Theta (Y) \height_{\Theta} ( Y)$ we therefore obtain  
\begin{eqnarray}
2N {\height}_\Theta (P ) & \leq &  N^{d_v +d_W +1}  \left[  (d_W +1) \left( {{d_V +d_W +1} \atop {d_V}}  \right) {\height'}_{\Theta}  (W) \deg_{\Theta} (V)  +\right. \nonumber \\  
 & & \hspace{5cm} + \left. (d_V +1) \left(  {{d_V +d_W +1} \atop {d_W}}  \right) {\height'}_{\Theta}  (V) \deg_{\Theta} (W) \right]   \nonumber \\
&  &   \hspace{0.6cm} +N^{d_V +d_W} (d_V +d_W )(2  M_{{\cal J} ,{\cal M}} +\log 2 )  \left(  {{d_V +d_W} \atop {d_V}}  \right)  \deg_{\Theta}  (V)  \deg_{\Theta}  (W )  \nonumber \\
 & &  \hspace{3cm} + O(p\log p) . \nonumber
\end{eqnarray} 
From here, fixing $N=4$, the bound $M_{{\cal J} ,{\cal M}} \le O(p\log p)$ (Lemma~\ref{lespetites}) 
concludes the proof, after expressing quantities ${\height'}_\Theta$ back into ${\height}_\Theta$. 
\hspace{8cm} $\Box$ 

\end{proof}

\bigskip

That arithmetic B\'ezout theorem will be  our principal tool in the sequel.  

\section{Height bounds for quadratic points on $X_0 (p )$}
\label{7}

\begin{propo}
\label{orderdegree}
Let $\iota \colon X\hookrightarrow J$ be some Albanese map from a curve (of positive genus) over some field $K$ to its jacobian $J$.
Let $\pi \colon J\to A$ be some quotient of $J$, with $\dim (A)>1$, and $X'$ be the normalization of the image $\pi \circ \iota (X)$ of $X$ in $A$. 
Then the map $\pi' \colon X\to X'$ induced by $\pi \circ \iota$ verifies
$$
\deg (\pi' ) \leq \frac{\dim (J)-1}{\dim (A ) -1} .
$$
\end{propo}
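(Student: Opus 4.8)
The statement is a purely geometric one about curves, jacobians and quotients, so the plan is to exploit the universal property of the jacobian together with Wirtinger-type degree computations of the sort already used in Section~\ref{LaHauteurDeToutLeMonde}. Write $g=\dim(J)$, $g_A=\dim(A)$, and let $X'\subseteq A$ be the image of $X$ under $\pi\circ\iota$, with normalization $\widetilde{X'}$; set $n:=\deg(\pi')$ where $\pi'\colon X\to\widetilde{X'}$ is the induced (finite, hence surjective) map, after choosing a base point on $X$ whose image is a smooth point of $X'$. The jacobian $J':=\mathrm{Jac}(\widetilde{X'})$ receives an Albanese map $\iota'\colon\widetilde{X'}\hookrightarrow J'$, and by functoriality of the Albanese construction there is a morphism $f\colon J'\to A$ making the square commute, i.e. $f\circ\iota'$ agrees with $\pi\circ\iota$ up to the chosen translation. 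Likewise, pulling back divisors along $\pi'$ induces a morphism $\pi'^{*}\colon J'\to J$ (of degree controlled by $n$) and, taking duals/Albanese, a morphism $\pi'_{*}\colon J\to J'$ with $\pi'_{*}\circ\iota=\iota'\circ\pi'$. The key numerical input is that $\pi'_{*}\circ\pi'^{*}=[n]$ on $J'$.

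\textbf{Key steps.} First I would record that, since $\dim(A)=g_A>1$, the map $f\colon J'\to A$ is surjective (its image is an abelian subvariety containing $\iota'(\widetilde{X'})$, which generates $J'$, hence maps onto the subvariety generated by $\pi\circ\iota(X)$, which is all of $A$ as $\pi$ is surjective onto $A$ and $\iota(X)$ generates $J$). Consequently $g':=\dim(J')=g(\widetilde{X'})\geq g_A$. Second, I would compute the $\Theta$-degree of $X$ inside $J$ and compare it to that of $\widetilde{X'}$ inside $J'$: by Wirtinger's theorem (\cite{GH78}, p.~171), as invoked in the proof of Proposition~\ref{heightXThetae}, one has $\deg_{\Theta_J}(\iota(X))=g$ and $\deg_{\Theta_{J'}}(\iota'(\widetilde{X'}))=g'$, and more generally the $\pi'_{*}$-push-forward satisfies $\pi'_{*}\big(\iota(X)\big)=n\cdot\iota'(\widetilde{X'})$ as cycles (because $\pi'$ has degree $n$). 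Third, combining these: $g=\deg_{\Theta_J}(\iota(X))$, while projecting the curve class along $\pi'_{*}$ and using that degree can only decrease under push-forward through a polarized quotient (the pull-back relation $[n]^{*}\Theta_{J'}\simeq(\pi'^{*}\pi'_{*}\text{-type})$ identities of (\ref{produitsTheta})), one extracts the inequality $n\cdot g'\leq g$, but that is too strong; the correct bookkeeping, taking the base point / the fixed $\infty$-fibre into account exactly as in the cited arguments, yields instead $n\cdot(g'-1)\leq g-1$. Since $g'\geq g_A$, this gives $n\leq\frac{g-1}{g'-1}\leq\frac{g-1}{g_A-1}=\frac{\dim(J)-1}{\dim(A)-1}$, which is the claim.

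\textbf{Main obstacle.} The delicate point is the precise form of the degree inequality — getting $n(g'-1)\leq g-1$ rather than the naive $ng'\leq g$. This is essentially the content of the classical Castelnuovo–Severi / de Franchis bound for covers of curves, and the ``$-1$'' shifts come from the genus (Riemann–Hurwitz) rather than from the raw dimension of the jacobian. So the cleanest route is probably to bypass jacobians at the last step and argue directly: the composite $X\xrightarrow{\pi'}\widetilde{X'}\hookrightarrow X'\subseteq A$ realizes $X$ as an $n$-sheeted cover of a curve $\widetilde{X'}$ of genus $g'$, and one needs a relation between $g$, $g'$, $n$ and the abelian-variety data. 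The honest tool is: the pull-back $\pi'^{*}\colon\mathrm{Jac}(\widetilde{X'})\to\mathrm{Jac}(X)=J$ is injective up to isogeny (its image is the ``new part relative to $\widetilde{X'}$''-complement), so $J$ contains an abelian subvariety isogenous to $J'$, giving $g\geq g'\geq g_A$ — but to pull out the factor $n$ one must instead look at the \emph{Prym}/complement and use Riemann–Hurwitz $2g-2\geq n(2g'-2)$, i.e. $g-1\geq n(g'-1)$ when $g'\geq 1$ (and handle $g'=0,1$ separately, where the hypothesis $\dim(A)>1$ forces $g'\geq g_A\geq 2$ anyway, so those degenerate cases do not arise). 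Assembling Riemann–Hurwitz with $g'\geq g_A$ finishes the proof; the only thing requiring care is checking $\pi'$ is separable (so Riemann–Hurwitz applies in the stated inequality form) and that $\widetilde{X'}$ indeed has genus $\geq\dim(A)$, both of which follow from surjectivity of $f\colon\mathrm{Jac}(\widetilde{X'})\twoheadrightarrow A$ noted above.
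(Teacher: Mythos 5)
Your final argument — Albanese functoriality forcing $J'=\mathrm{Jac}(\widetilde{X'})$ to surject onto $A$, hence $g(\widetilde{X'})\geq\dim(A)$, combined with Riemann--Hurwitz giving $\deg(\pi')\,(g(\widetilde{X'})-1)\leq g-1$ — is exactly the paper's proof. The Wirtinger/$\Theta$-degree detour in your second paragraph is unnecessary (and, as you yourself note, gives the wrong bookkeeping), but the route you settle on under ``Main obstacle'' is correct and is the one the paper takes.
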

\begin{proof}
The map $\pi \circ \iota$ induces an inclusion of function fields which defines the map $\pi' \colon X\to X'$. If $J'$ is the jacobian  
of $X'$, Albanese functoriality says that $\pi$ factorizes through surjective morphisms $J\to J' \to A$. Hurwitz formula writes:
$$
\deg (\pi' ) =\frac{\dim (J)-1 -\frac{1}{2} \deg R}{\dim (J' ) -1} 
$$
for $R$ the ramification divisor of $\pi'$, whence the result. \hspace{6cm} $\Box$
\end{proof}

\bigskip

\begin{lem}
\label{afterBrumer}
For all large enough prime $p$, let $X:=X_0 (p)$ and $\pi_e  \colon J_0 (p) \twoheadrightarrow J_e$ be the projection. Let 
$$
\iota_{P_0}  \colon 
\left\{
\begin{array}{rcl}
X_0 (p)& \hookrightarrow & J_0 (p) \\
P & \mapsto & {\mathrm{cl}} (P-P_0 )
\end{array}
\right.
$$
for some $P_0$ in $X_0 (p)({\overline{\Q}}) $ such that $w_p (P_0 )=P_0$  (there are roughly $\sqrt{p}$ such points, see Proposition~3.1 of~\cite{Gr87b}),
and set $\varphi_e :=\pi_e \circ \iota_{P_0}$. Then: 
\begin{itemize}
\item if $a \in J_e (\Q )$ is some (necessarily torsion) point, the equality $\varphi_e  (X_0 (p)) =a-\varphi_e (X_0 (p))$ implies
\begin{eqnarray}
\label{condition}
\varphi_e  (X_0 (p)) =a+\varphi_e  ( X_0 (p))
\end{eqnarray}
and $a=0$; 

\item  If $d$ is the degree of the map ${X_0 (p) \to \widetilde{\varphi_e (X_0 (p))}}$ to the normalization 
of $\varphi_e (X_0 (p))$, then $d$ is either $1$, $3$ or $4$; 

\item Assuming moreover Brumer's conjecture (see~(\ref{Brumer}) and (\ref{TuM'asDitDeLeFaireMoinsFort}))
equality~(\ref{condition})  implies~$d=1$ for large enough~$p$.
\end{itemize}
\end{lem}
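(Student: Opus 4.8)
The three bullets are of increasing depth. For the first, I would argue that the map $\varphi_e = \pi_e \circ \iota_{P_0}$ is, up to the choice of $P_0$, compatible with the Atkin--Lehner involution: since $w_p(P_0) = P_0$, the divisor $w_p(P) - P_0 = w_p(P) - w_p(P_0)$ is the $w_p$-pushforward of $(P) - (P_0)$, so $w_p \circ \iota_{P_0} = \iota_{P_0} \circ w_p$ as maps $X_0(p) \to J_0(p)$. On the winding quotient $J_e \subseteq J_0(p)^-$ the involution $w_p$ acts as $-1$ (by definition of $J_0(p)^-$ and the fact that $J_e$ lies inside it, cf.\ Section~\ref{Winding}). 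Hence $w_p \varphi_e(X_0(p)) = -\varphi_e(X_0(p)) = \varphi_e(X_0(p))$ already, so the chain of equalities in~(\ref{condition}) reduces to $Y = a - Y = a + Y$ for $Y := \varphi_e(X_0(p))$, whence $a + Y = Y$ forces $a$ to lie in the (finite) stabilizer of $Y$, and $a = a - Y + Y$ combined with $Y = a - Y$ gives $2a = 0$... more carefully, from $Y = a + Y$ one gets $a \in \mathrm{Stab}(Y)$, and from $Y = a - Y$ one gets, applying $[-1]$, that $-Y = -a + (-Y)$, i.e.\ $-a \in \mathrm{Stab}(-Y) = \mathrm{Stab}(Y)$; the point is then that the stabilizer of a nondegenerate curve (a curve generating $A$, which $Y$ does since $J_e$ is the optimal quotient through which $\varphi_e$ factors, for $\dim J_e$ large) inside an abelian variety is trivial, so $a = 0$. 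The hypothesis $p \ge 23$, $p \ne 37$ is exactly what guarantees $J_e \ne 0$ and $\dim J_e \ge 1$ (indeed $\ge 2$) so that $Y$ is not a point.

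For the second bullet, I would apply Proposition~\ref{orderdegree} directly: with $J = J_0(p)$ of dimension $g$ and $A = J_e$, the degree $d$ of $X_0(p) \to \widetilde{\varphi_e(X_0(p))}$ satisfies
$$
d \le \frac{g - 1}{\dim J_e - 1}.
$$
Combining with the Iwaniec--Sarnak/Kowalski--Michel lower bound~(\ref{KM}), $\dim J_e \ge (\tfrac14 - o(1)) g$, gives $d \le 4 + o(1)$, hence $d \le 4$ for $p$ large; for the remaining small $p$ one checks $\dim J_e$ numerically (or uses the explicit genus bound~(\ref{genussize})) to confirm $d \in \{1,2,3,4\}$, and then one excludes $d = 2$. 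The exclusion of $d = 2$ is where condition~(\ref{condition}) re-enters: if $d = 2$ then $X_0(p) \to \widetilde{\varphi_e(X_0(p))} =: X'$ is a degree-$2$ cover, so $X_0(p)$ is hyperelliptic or bielliptic over $X'$; but $X_0(p)$ has gonality $> 2$ for $p > 71$ (Ogg, as quoted in Proposition~\ref{heightXSquareThetae}), and if $X'$ itself had positive genus this would force a map of degree $\le 2$ from $X_0(p)$ to a positive-genus curve, contradicting the gonality bound together with the structure of quotients of $J_0(p)$; the case $X'$ rational is impossible since $\varphi_e(X_0(p))$ generates the positive-dimensional $J_e$. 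The small primes $23 \le p \le 71$ are handled by the explicit list of hyperelliptic/bielliptic $X_0(p)$.

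For the third bullet, assuming Brumer~(\ref{Brumer}) — or rather the weak form~(\ref{TuM'asDitDeLeFaireMoinsFort}), $\dim J_e \ge g/3 + 1$ for $p$ large — Proposition~\ref{orderdegree} gives
$$
d \le \frac{g-1}{\dim J_e - 1} \le \frac{g - 1}{g/3} < 3,
$$
so $d \le 2$, and by the $d = 2$ exclusion above, $d = 1$ for $p$ large. \textbf{The main obstacle} I anticipate is not the degree count — which is a one-line Hurwitz estimate — but the rigorous exclusion of $d = 2$: one must rule out that $\varphi_e(X_0(p))$ could be a curve admitting $X_0(p)$ as a hyperelliptic double cover of it, and this is where condition~(\ref{condition}) and the gonality bound must be deployed carefully (the subtlety being that $\widetilde{\varphi_e(X_0(p))}$ could a priori be an elliptic curve, which would not contradict gonality $> 2$ — one needs the structure of the winding quotient, namely that it contains no elliptic factor for $p$ of interest, or a direct argument from the $w_p$-antisymmetry, to close this gap). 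The self-map condition~(\ref{condition}) is presumably used precisely to pin down that the covering involution would have to be compatible with $w_p$ in a way that is incompatible with $J_e$ having the relevant factor.
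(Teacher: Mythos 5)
Your reduction of (\ref{condition}) to $Y=a+Y$ with $Y:=\varphi_e (X_0 (p))$ is correct ($w_p$ acts as $-1$ on $J_e$ and permutes $X_0(p)$, so $-Y=Y$), but the assertion you then lean on --- that the stabilizer of a curve generating $J_e$ is trivial --- is false in general, and it is exactly the point at issue. For a generating curve of geometric genus $\ge 2$ the stabilizer is finite, but it may well contain nonzero torsion points: take for instance the preimage in an abelian variety $A$ of an ample irreducible curve under an isogeny $A\to A/\langle t\rangle$. The paper treats a possible nonzero $a$ of order $n$ quite differently: it passes to the quotient $J_e \to J_e /\langle a\rangle$, notes that the composite map from $X_0(p)$ to $X'$, the normalization of the image there, has degree $d\cdot n$, and bounds $d\cdot n \le 4+\varepsilon$ (resp.\ $\le 2+\varepsilon$ under Brumer) by Proposition~\ref{orderdegree} together with (\ref{KM}); combined with the exclusion of degree $2$ and, in the case $d=1$, with Ogg's theorem $\mathrm{Aut}(X_0(p))=\langle w_p\rangle$ (translation by $a$ would induce an automorphism of $X_0(p)$ which cannot be $w_p$, since $\varphi_e\circ w_p=-\varphi_e$ while the image is positive-dimensional), this forces $n=1$ in every case. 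So your proof of the first bullet has a genuine gap; note also that the hypothesis $p\ge 23$, $p\ne 37$ is there precisely for this automorphism theorem, not to guarantee $\dim J_e \ge 2$.

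The second gap, which you partly acknowledge yourself, is the exclusion of $d=2$: a degree-$2$ map from $X_0(p)$ onto a positive-genus curve does not contradict gonality $>2$ (gonality concerns maps to $\PPP^1$ only; bielliptic curves of large gonality abound), so your argument does not close, and no appeal to the absence of elliptic factors in $J_e$ is available or needed. The paper's argument is short and uses no gonality at all: a degree-$2$ extension of function fields is automatically Galois, so $d=2$ would give an involution of $X_0(p)$, the deck transformation of $X_0(p)\to\widetilde{\varphi_e (X_0 (p))}$; it is not $w_p$ because $\varphi_e\circ w_p=-\varphi_e\neq\varphi_e$, contradicting $\mathrm{Aut}(X_0(p))=\langle w_p\rangle$ for $p\ge 23$, $p\neq 37$ (\cite{Og77}, \cite{KM88}). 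Your degree counts for the second and third bullets (Proposition~\ref{orderdegree} with (\ref{KM}), resp.\ with weak Brumer) do match the paper's, but to obtain the first bullet as well they must be applied to $d\cdot n$ via the quotient by $\langle a\rangle$, as above.
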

\begin{proof}
Notice first that, by our choice of $P_0$ (whence $\iota$), and because $J_e$ belongs to the $w_p $-minus part of $J_0 (p)$, one has:
$$
\varphi_e  (w_p (P)) =w_p (\varphi_e (P)) =- \varphi_e  (P)
$$
for all $P\in X_0 (p)(\C)$, whence equality~(\ref{condition}). So let $n$ be the order of $a$, which also is that of the automorphism ``translation by $a$ restricted 
to $\varphi_e (X_0 (p))$'' . We remark that the degree $d$ cannot be equal to $2$, as otherwise the extension of fraction fields $K(X_0 (p))/K(\varphi_e (X_0 (p)))$ 
would be Galois and $X_0(p)$ would possess an involution different from $w_p $, which it does not  by Ogg's 
theorem (\cite{Og77}, or~\cite{KM88}). If $d=1$, the same reason that ${\mathrm{Aut}} (X_0 (p)) =\langle w_p \rangle$ implies that $n=1$. 
Let now $X'$ be the normalization of the quotient of $\varphi_e (X_0 (p))$ by the automorphism $P\mapsto P+a$, that is, the image 
of $\varphi_e (X_0 (p))$ by the quotient morphism $J_e \twoheadrightarrow J_e /\langle a\rangle$. Let $\pi$ be the composed map $J_0 (p) 
\stackrel{\varphi_e}{\longrightarrow} J_e \to J_e /\langle a\rangle$. The degree of $X_0 (p)\to X'$ is $d\cdot n$ and Proposition~\ref{orderdegree} 
together with the left part of inequalities~(\ref{KM}) implies:
$$
d\cdot n \leq \frac{g -1}{(\frac{1}{4} -o(1) ) g -1} \leq 4+o(1) 
$$ 
for large enough $p$. This shows that if $d=3$ or $4$ one still has $a=0$, whence the Proposition's first two statements. 
Assuming (\ref{TuM'asDitDeLeFaireMoinsFort}) we have $d\cdot n<3$, so that $d=1$ and $a=0$ by previous arguments. \hspace{13cm}  $\Box$
\end{proof}

\medskip

\begin{rema}
{\rm Replace, in Lemma~\ref{afterBrumer}, the map $X_0 (p) \to J_e$ by $X_0 (p) \stackrel{\varphi}{\longrightarrow}  J_0 (p)^-$ (by which the 
former factorizes, by the way). The above proof 
shows that the map $X_0 (p) \to \varphi (X_0 (p))$ is of generic degree $1$ (independently on any conjecture), but of course it needs not be
injective on points:
a finite number of points can be mapped together to singular points on $\varphi (X_0 (p))$. In our case one checks those are 
among the Heegner points $P$ such 
that $P=w_p (P)$ (for which we again refer to Proposition~3.1 of~\cite{Gr87b}). Indeed, 
the endomorphism of $J_0 (p)$ defined by multiplication by $(1-w_p )$ factorizes through $\varphi$, and $\cdot (1-w_p )$ is the map 
considered in~(\ref{chain}) and what follows, inducing multiplication by $2$ on tangent spaces. Therefore, if $P$ maps to a multiple point 
of $\varphi  (X_0 (p))$, it also maps to a multiple point of $(1-w_p )\circ \iota (X_0 (p))$. Now assuming $X_0 (p)$ has gonality larger 
than $2$ (which is true as soon as $p> 71$, \cite{Og74}, Theorem~2), the 
equality ${\mathrm{cl}} ( (1-w_p )P)={\mathrm{cl}} ((1-w_p )P' )$ in $J_0 (p)$, for some $P'$ on $X_0 (p)$ different from $P$, implies $P=w_p P$
and $P' =w_p P'$. That is, $P$ and $P'$ are  Heegner points. 
}
\end{rema}

\medskip

\begin{lem}
\label{notX+}
Suppose $P$ belongs to $X_0 (p^2 ) (K)$ for some quadratic number field $K$, and $P$ is not a complex multiplication point. Then for one of 
the two natural degeneracy morphisms $\pi$ from $X_0 (p^2 )$ to $X_0 (p)$, the point $Q:=\pi (P)$ in $X_0 (p) (K)$ does not define a $\Q$-valued 
point of the quotient curve $X_0^+ (p) :=X_0 (p)/w_p$. 
\end{lem}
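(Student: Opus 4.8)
The plan is to argue by contradiction: assume that for \emph{both} degeneracy morphisms the image of $P$ descends to a $\Q$-rational point of $X_0^+(p)$, and deduce that the elliptic curve underlying $P$ has complex multiplication. Fix notation: write $\pi_1,\pi_2\colon X_0(p^2)\to X_0(p)$ for the two degeneracy maps, which on moduli read $\pi_1(E,C)=(E,C[p])$ and $\pi_2(E,C)=(E/C[p],\,C/C[p])$ for $C$ cyclic of order $p^2$; both are defined over $\Q$, as is the Fricke involution $w_p$ on $X_0(p)$, given on moduli by $w_p(E,D)=(E/D,\,E[p]/D)$. Put $Q_i:=\pi_i(P)\in X_0(p)(K)$ and let $\sigma$ generate $\gal(K/\Q)$. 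Since $\pi_1,\pi_2$ preserve the underlying $\overline{\Q}$-isogeny class, the hypothesis that $P$ is non-CM forces $Q_1,Q_2$ non-CM; we may also assume $P$ is non-cuspidal (the $j$-height being defined only away from the cusps). The first step is the elementary translation: for $Q\in X_0(p)(K)$, the image of $Q$ in $X_0^+(p)=X_0(p)/w_p$ is $\Q$-rational if and only if ${}^\sigma Q\in\{Q,\,w_p(Q)\}$, i.e. either $Q\in X_0(p)(\Q)$ or ${}^\sigma Q=w_p(Q)\neq Q$. I would then rule out the first alternative for $Q_1$ and $Q_2$: if $Q_i\in X_0(p)(\Q)$ were non-cuspidal, Mazur's determination of $X_0(p)(\Q)$ for $p$ not too small (here $p\geq 23$, $p\neq 37$, in line with the hypotheses of Lemma~\ref{afterBrumer}) would make $Q_i$ a CM point, contradicting that it is non-CM; and if $Q_i$ were a cusp then, as $\pi_i$ carries non-cusps to non-cusps, $P$ would be a cusp, excluded. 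Hence, under the contradiction hypothesis, ${}^\sigma Q_1=w_p(Q_1)$ and ${}^\sigma Q_2=w_p(Q_2)$.

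The heart of the argument is then a short isogeny computation. Write $P=(E,C)$ with $D:=C[p]$. The equality ${}^\sigma Q_1=w_p(Q_1)$ of points of $X_0(p)$ means there is an isomorphism ${}^\sigma E\simeq E/D$ carrying ${}^\sigma D$ onto $E[p]/D$; passing to the quotients by these subgroups yields ${}^\sigma(E/D)\simeq (E/D)/(E[p]/D)=E/E[p]\simeq E$, the last isomorphism being multiplication by $p$. On the other hand, ${}^\sigma Q_2=w_p(Q_2)$ with $Q_2=(E/D,\,C/D)$ gives an isomorphism ${}^\sigma(E/D)\simeq (E/D)/(C/D)=E/C$. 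Combining the two, $E\simeq E/C$. But $E\to E/C$ has cyclic kernel $C$ of order $p^2$, so composing with an isomorphism $E/C\simeq E$ produces an endomorphism of $E$ of degree $p^2$ with cyclic kernel; if $E$ is not CM then (in characteristic $0$) $\mathrm{End}(E)=\Z$, and the only endomorphisms of degree $p^2$ are $\pm p$, whose kernel $E[p]\simeq(\Z/p\Z)^2$ is not cyclic — a contradiction. Hence $P$ is a CM point, contrary to hypothesis, and the lemma follows.

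The argument is essentially formal once the moduli bookkeeping is set up correctly; the two places that need care are (i) the precise Atkin–Lehner formula $w_p(E,D)=(E/D,E[p]/D)$ together with the identification $E/E[p]\simeq E$, which are what let us chain the isomorphisms, and (ii) the appeal to Mazur's theorem on $X_0(p)(\Q)$, the only non-elementary input and the source of the small-prime exclusions. One should also verify that $\pi_1,\pi_2$ and $w_p$, $w_{p^2}$ are all $\Q$-morphisms so that Galois commutes with them; the relations $\pi_1\circ w_{p^2}=w_p\circ\pi_2$ and $\pi_2\circ w_{p^2}=w_p\circ\pi_1$ (immediate on moduli) are not strictly needed above, but they give a slightly more symmetric alternative route, comparing the points ${}^\sigma P$ and $w_{p^2}(P)$ of $X_0(p^2)$.
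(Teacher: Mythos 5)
Your proof is correct and follows essentially the same route as the paper's: assume both images descend to $X_0^+(p)(\Q)$, chain the two Atkin--Lehner/Galois identifications to get $E\simeq E/C$, and conclude CM from the existence of a cyclic degree-$p^2$ endomorphism. You are in fact slightly more complete than the paper, which silently treats ``$Q_i$ defines a $\Q$-point of $X_0^+(p)$'' as meaning ${}^\sigma Q_i=w_p(Q_i)$ and does not address the alternative $Q_i\in X_0(p)(\Q)$, whereas you dispose of that case via Mazur's theorem.
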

\begin{proof}
Using the modular interpretation, we write $P=(E,C_{p^2} )$ for $E$ an elliptic curve over $K$ and $C_{p^2}$ a cyclic $K$-isogeny
of degree $p^2$, from which we obtain the two points $Q_1 :=(E,p\cdot C_{p^2} )$ and $Q_2 :=(E/p\cdot C_{p^2} ,C_{p^2 } \mod p\cdot C_{p^2})$ 
in~$X_0 (p)(K)$. Assume both $Q_1$ and $Q_2$ do define elements of~$X_0^+ (p)(\Q )$. If~$\sigma$ denotes a generator 
of ${\mathrm{Gal}} (K/\Q )$ we then have
$$
w_p (Q_1 )=(E/p\cdot C_{p^2} ,E[p] \!\!\!\mod p\cdot C_{p^2} )\simeq \sigma (Q_1 )
$$ 
and 
$$
w_p (Q_2 )=(E/ C_{p^2} ,E[p] +C_{p^2} \!\!\!\mod  C_{p^2} )\simeq \sigma (Q_2 ).
$$
Therefore $E \simeq {}^\sigma (E/p\cdot C_{p^2} ) \simeq E/C_{p^2}$, which means $E$ has complex multiplication. $\Box$
\end{proof}

\bigskip

We can now conclude with the main result of this paper.

\bigskip
\begin{theo}
\label{theoheight}
There is an integer $C$ such that the following holds. If $p$ is a prime number such that (\ref{TuM'asDitDeLeFaireMoinsFort}), the weak form of Brumer's conjecture, holds, 
and $P$ is a quadratic point of $X_0 (p)$ (that is: $P$ is an element of $X_0 (p )(K)$ for some quadratic number field $K$) which does not come from $X_0 (p)^+ (\Q )$, 
then its $j$-height satisfies
\begin{eqnarray}
\label{finalheightA}
\height_j (P) <C\cdot p^{5} \log p  .
\end{eqnarray}  
If $P$ is a quadratic point of $X_0 (p^2 )$ then the same conclusion holds without further assumption apart from (\ref{TuM'asDitDeLeFaireMoinsFort}). 
\end{theo}
\begin{proof}
In the case $P$ is a quadratic point of $X_0 (p^2 )$, by Lemma~\ref{notX+} one can deduce from $P$ a point $P'$ in $X_0 (p) (K)$ which does not induce 
an element of~$X_0^+ (p)(\Q )$, and whose $j$-height, say, is 
equal to $\height_j (P )+O(\log p)$ for an explicit function $O(\log p)$ (see e.g. \cite{Pe01}, 
inequality (51) on p.~240 and \cite{BPR13}, Proposition 4.4 (i)). 
Replace $P$ by $P'$ if necessary. By Theorem~\ref{TheoremTheta} it is now sufficient to prove that $\height_\Theta (P-\infty )=O(p^5 \log p)$. 

   Keep the notation of Lemma~\ref{afterBrumer}. By construction, the point:
$$
a:=\varphi_e (P) +\varphi_e ({}^\sigma P) =\varphi_e (P) -\varphi_e (w_p ({}^\sigma P)) =\varphi_e (P-w_p  ({}^\sigma P))
$$ 
is torsion. First assume $a=0$. Set $X^{(2), -} :=\left\{ \iota_\infty  (x) -\iota_\infty  (y), (x,y)\in X_0 (p)^2 \right\}$ as in Proposition~\ref{heightXSquareThetae}. 
Recall from Section~\ref{preliminaries} that $\tilde{I}_{J_e^\perp ,N_e^\perp} \colon  J_e^\perp \to \tilde{J}_e^\perp$ is the map defined as in~(\ref{OnTangentSpaces}),
that $\iota_{\tilde{J}_e^\perp ,N_e^\perp}$ is the embedding $\tilde{J}_e^\perp  \hookrightarrow J_0 (p)$, and denote by $[N_{\tilde{J}_e^\perp}]_{\tilde{J}_e^\perp}$ the multiplication 
by $N_{\tilde{J}_e^\perp}$ restricted to ${\tilde{J}_e^\perp}$. As in~(\ref{pseudoprojection})
and before Corollary~\ref{DonnezNousDonnezNousDesHauteurs} we use our pseudo-projections and define 
$$
{\widetilde{X}}^{(2), -} := \iota_{\tilde{J}_e^\perp ,N_e^\perp} [N_{\tilde{J}_e^\perp}]_{\tilde{J}_e^\perp} ^{-1} \tilde{I}_{J_e^\perp ,N_e^\perp} \pi_{J_e^\perp} (X^{(2), -} )  .
$$ 
Then $P-w_p   ({}^\sigma P)$ belongs to $X^{(2), -} \cap \tilde{J}_e^\perp$, and even to the intersection of surfaces (in the generic fiber):
$$
X^{(2), -} \cap {\widetilde{X}}^{(2), -}   .
$$
Recall (see~(\ref{pseudoprojection})) that ${\widetilde{X}}^{(2), -}$ is a priori highly non-connected, 
being the inverse image of multiplication by $N_{\tilde{J}_e^\perp}$ in $\tilde{J}_e^\perp$ of the 
(irreducible) surface $\tilde{I}_{J_e^\perp ,N_e^\perp} \pi_{J_e^\perp} (X^{(2), -} )$.  However, in 
what follows we can replace ${\widetilde{X}}^{(2), -}$ by one of its connected components 
containing $P-w_p   ({}^\sigma P)$. Denote that component by ${\widetilde{X}}^{(2), -}_P$.

By construction, the theta degree and height of ${\widetilde{X}}^{(2), -}_P$, as an 
irreducible subvariety of $J_0 (p)$ endowed with $\Theta$, are those of $\pi_{J_e^\perp} (X^{(2), -} ) =X^{(2),-}_{e^\perp}$ relative to the only natural hermitian 
sheaf of ${J}_e^\perp$, that is, the $\Theta_e^\perp =\Theta_{J_e^\perp}$ described in paragraph~\ref{PolarizationsAndHeights}. One can therefore apply 
Proposition~\ref{heightXSquareThetae} to obtain that all theta degrees are $O(p^2 )$, all 
N\'eron-Tate theta heights are $O(\log p)$. We claim the dimension of $(X^{(2), -} \cap 
{\widetilde{X}}^{(2), -}_P )$ is zero. That intersection indeed corresponds to pairs of distinct points on $X_0 (p)$ having same image ($0$) under $\varphi_e$.
On the other hand, Brumer's conjecture implies $X_0 (p)\to \varphi_e (X_0 (p))$ has generic degree one (see Lemma~\ref{afterBrumer}), so our intersection points 
correspond to singular points on $\varphi_e (X_0 (p))$, which of course make a finite set.

We therefore are in position to apply our arithmetic B\'ezout theorem (Proposition~\ref{Bezout}), 
which yields $\height_{\Theta} (P-w_p  ({}^\sigma P) ) \leq O(p^5  \log p)$. The two points $(P-\infty )$ and $(w_p ({}^\sigma P) -\infty )$ have 
same $\Theta$-height (recall $w_p$ is an isometry on $J_0 (p)$ for $\height_\Theta$, compare the end of Remark~\ref{symmetricHodge}), and are 
by hypothesis different, so one can apply them Mumford's repulsion principle (Proposition~\ref{Mumford}) to obtain  
\begin{eqnarray}
\label{Lafinale}
\height_{\Theta} (P-\infty  )\leq   O(p^5 \log p).
\end{eqnarray}

       Let us finally deal with the case when the torsion point $a=\varphi_e (P) +\varphi_e ({}^\sigma P)$ 
is nonzero. We adapt the previous argument: pick a lift $\tilde{a} \in J_0 (p)(\overline{\Q})$ of $a$ 
by $\pi_e^\perp$ which also is torsion, and let $t_{\tilde{a}}$ be the translation by $\tilde{a}$ 
in $J_0 (p)$. Replace $(P-w_p ({}^\sigma P))$ by $t_{\tilde{a}}^* (P-w_p ({}^\sigma P))$,  $X^{(2), -}$ 
by $t_{\tilde{a}}^* X^{(2), -}$ and ${\widetilde{X}}^{(2), -}$ by
$$
{\widetilde{t_{\tilde{a}}^* {X}}^{(2), -}} = \iota_{\tilde{J}_e^\perp ,N_e^\perp} [N_{\tilde{J}_e^\perp}]_{\tilde{J}_e^\perp} ^{-1} 
\tilde{I}_{J_e^\perp ,N_e^\perp} \pi_{J_e^\perp}  
(t_{\tilde{a}}^* X^{(2), -}).
$$  
Now $t_{\tilde{a}}^*  (P-w_p ({}^\sigma P))$ belongs to  $( t_{\tilde{a}}^* X^{(2), -} \cap {\widetilde{t_{\tilde{a}}^* {X}}^{(2), -}} )$.    
The theta degree and height of $t_{\tilde{a}}^* X^{(2), -}$ and ${\widetilde{t_{\tilde{a}}^* {X}}^{(2), -}}$ (or rather, as above, some connected component 
${\widetilde{t_{\tilde{a}}^* {X}}^{(2), -}}_P $ of it containing $t_{\tilde{a}}^* (P-w_p  ({}^\sigma P))$) are
the same as for the former objects in the case $a=0$. The fact that the intersection 
$$
 t_{\tilde{a}}^* X^{(2), -} \cap {\widetilde{t_{\tilde{a}}^* {X}}^{(2), -}}_P 
$$ 
is zero-dimensional comes from the fact that otherwise, we would have $\varphi_e (X_0 (p)) =a-\varphi_e (X_0 (p))$, a contradiction with our present 
hypothesis $a\neq 0$ by Proposition~\ref{afterBrumer}. The height bound for $P$ is therefore the same as (\ref{Lafinale}). 
$\Box$

\end{proof}

\medskip

\begin{coro}
\label{corolaBrum}
Under the assumptions of Theorem~\ref{theoheight}, if $p$ is a large enough prime number and $P$ is a quadratic point of $X_0 (p^\gamma )$ 
for some integer $\gamma$, such that $P$ is not a cusp nor a complex multiplication point, then $\gamma \leq 10$.  
\end{coro}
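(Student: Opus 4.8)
\textbf{Proof plan for Corollary~\ref{corolaBrum}.} As in Lemma~\ref{notX+} and the proof of Theorem~\ref{theoheight}, one works throughout with non--CM points, and one may clearly assume $\gamma\ge 2$. So write $P=(E,C)$ with $E$ an elliptic curve over a number field $K$ with $[K:\Q]\le 2$ and $C\subset E$ a $G_K$--stable cyclic subgroup of order $p^\gamma$, $E$ having no complex multiplication. The strategy is to bound $\height_j(P)$ from above via Theorem~\ref{theoheight} (applied after a degeneracy map), to bound it from below via the effective isogeny estimates underlying~\cite{BPR13}, and to compare the two bounds.

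First I would push $P$ down through the ``forget level'' degeneracy morphism $X_0(p^\gamma)\to X_0(p^2)$, $(E,C)\mapsto(E,p^{\gamma-2}C)$. This produces a quadratic point $\bar P$ of $X_0(p^2)$ which is again non--CM and which has the \emph{same} $j$--invariant, so $\height_j(P)=\height_j(\bar P)$. Under Brumer's conjecture, Theorem~\ref{theoheight} then gives $\height_j(j(E))=\height_j(\bar P)<C\,p^{13}$ for $p$ large (or $<C\,p^{12}\log p$ under~\cite{Au15}). Combining this with the standard comparison between the naive modular height and the stable Faltings height (e.g.~\cite{Pe01} and~\cite{BPR13}), one obtains $\height_F(E)<C'\,p^{13}$ for $p$ large.

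Next, the quotient isogeny $\phi\colon E\to E':=E/C$ is defined over $K$, is cyclic, and has degree $p^\gamma$. A short argument identifies this as the minimal degree of a $K$--isogeny $E\to E'$: since $E$ is non--CM, $\Hom_{\overline\Q}(E,E')$ is infinite cyclic, generated by a primitive isogeny $\psi$; writing $\phi=[n]\circ\psi$, cyclicity of $\ker\phi$ forces $n=1$, so $\psi=\phi$ is itself $K$--rational and every $K$--isogeny $E\to E'$ has degree a square multiple of $p^\gamma$. One then invokes the effective isogeny estimates of~\cite{BPR13} (refining Masser--W\"ustholz and Pellarin, and crucially exploiting the prime--power structure of the level through the associated isogeny character à la Momose): for a non--CM elliptic curve over a number field of degree $\le 2$, the smallest degree of an isogeny to a $K$--isogenous curve is bounded above by an explicit polynomial $\Lambda\bigl(\max(1,\height_F(E))\bigr)$ of controlled (small) degree. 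Putting the two bounds together yields $p^\gamma\le\Lambda(C'p^{13})$, i.e. $\gamma\log p\le 13\,\delta\,\log p+O(1)$ where $\delta$ is the degree of $\Lambda$; for $p$ large enough this forces $\gamma\le 25$, and the same conclusion follows from the $p^{12}\log p$--version of the height bound.

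The main obstacle is the last input: one needs the \emph{exponent} $\delta$ in the isogeny estimate to be small enough (roughly $\delta\le 2$) for the final bound to come out as $25$ rather than something much larger. The generic effective isogeny theorem is far too lossy for this; what rescues the argument is that the level is a prime power, so that the isogeny character $G_K\to(\Z/p^\gamma\Z)^\times$ can be analysed directly --- it is unramified outside $p$ and the bad primes of $E$, with prescribed restriction to inertia at $p$ --- which is exactly the kind of explicit estimate available in~\cite{BPR13}. Once that is granted, the remainder is the bookkeeping of the explicit constants sketched above, together with the (routine) verification that the degeneracy map leaves the $j$--height unchanged and that the isogeny $\phi$ really has minimal degree $p^\gamma$.
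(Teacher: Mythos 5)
Your argument is essentially the paper's: the proof there is literally ``combine Theorem~\ref{theoheight} with isogeny bounds as in \cite{GR11}'', and your write-up is a correct expansion of exactly that, including the necessary restriction to non-CM points (which the stated corollary tacitly assumes, since CM curves give quadratic points on $X_0(p^\gamma)$ for every $\gamma$ once $p$ splits in the CM field, and for them the minimal isogeny degree argument collapses). One correction to your closing paragraph: no Momose-style isogeny-character analysis is needed here --- the general Gaudron--R\'emond isogeny theorem of \cite{GR11}, which is what the paper invokes, is already quadratic in $\max(1,\height_F(E))$, i.e.\ it has exactly the exponent $\delta=2$ that your bookkeeping requires, so the ``generic'' effective isogeny theorem is not too lossy after all.
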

\begin{proof}
Let $P$ be a point in $X_0 (p^\gamma )(K)$, which is not a cusp nor a CM point, for some quadratic number field $K$. 
Then the isogeny bounds of~\cite{GR11}, Theorem~1.4 imply there is some real $\kappa$ with 
$$
p^\gamma < \kappa (\height_j (P))^2 .
$$  
Now Theorem~\ref{theoheight} gives that there is some absolute real constant $B$ such that, if $p\geq B$ then $\gamma \leq 10$. $\Box$ 
\end{proof}

\begin{rema}
{\rm A similar (but technically simpler) approach for the morphism $X_0 (p)\to J_e$ over $\Q$ should give 
(independently of any conjecture) a bound of shape
$O(p^3 \log p )$ for the $j$-height of $\Q$-rational (non-cuspidal) points of $X_0 (p)$ (which are known not 
to exist for $p>163$ by Mazur's theorem). The same
should apply for $\Q$-points of $X_{\mathrm{split}} (p)$ (and here again, we obtain a weak version of known 
results).

Actually, sharpening results directly coming from Section~\ref{jETtheta} (that is, avoiding the use of 
B\'ezout) might even yield the full strength of the above results about $X_0 (p)(\Q )$ and 
$X_{\mathrm{split}} (p) (\Q )$, with more straightforward (unconditional) proofs.}
\end{rema}

\bigskip 
 
\section{Appendix: An upper bound for the theta function, by P. Autissier}
\label{Section8}

In this appendix, I give a new upper bound for the norm of the classical theta
function on any complex abelian variety. This result, apart from its role in the present paper (see Section~\ref{ArithmeticBezout}),
has been used by Wilms \cite{Wi16} to bound the Green-Arakelov function on curves. 

\medskip

\subsection{Result}

Let $g$ be a positive integer. Write ${\mathbb{H}_g}$ for the Siegel space of
symmetric matrices $Z\in{\rm M}_g(\mathbb{C})$ such that ${\rm Im}Z$ is
positive definite. To every $Z\in\mathbb{H}_g$ is associated the theta function
defined by
$$\theta_Z(z)=\sum_{m\in\mathbb{Z}^g}\exp(i\pi ^{\rm t}\!mZm+2i\pi ^{\rm t}\!mz) ,\quad \forall z\in\mathbb{C}^g ,$$
and its norm defined by
$$\|\theta_Z(z)\|=\sqrt[4]{\det Y}\exp(-\pi ^{\rm t}\!yY^{-1}y)|\theta_Z(z)| , \quad \forall
z=x+iy\in\mathbb{C}^g ,$$
where $Y={\rm Im} Z$.

\medskip

My contribution here is the following:

\begin{propo}
\label{proposition8.1} 
Let $Z\in\mathbb{H}_g$ and assume that $Z$ is
Siegel-reduced. Put $\displaystyle c_g=\frac{g+2}{2}$ if $g\le3$ and
$\displaystyle c_g=\frac{g+2}{2}\Bigl(\frac{g+2}{\pi\sqrt{3}}\Bigr)^{g/2}$ if
$g\ge4$. The upper bound $\|\theta_Z(z)\|\le c_g(\det{\rm Im}Z)^{1/4}$ holds for
every $z\in\mathbb{C}^g$.
\end{propo}

Let us remark that $c_g\le g^{g/2}$ for every $g\ge2$. In comparison, Edixhoven
and de Jong (\cite{EJ11c} page 231) obtained the statement of 
Proposition~\ref{proposition8.1} with $c_g$ replaced by $\displaystyle2^{3g^3+5g}$.

\subsection{Proof}

Fix a positive integer $g$. Denote by $\mathbb{S}_g$ the set of symmetric
matrices $Y\in{\rm M}_g(\mathbb{R})$ that are positive definite. Let us recall
a special case of the functional equation for the theta function (see equation
(5.6) of \cite{Mum} page 195): for every $Y\in\mathbb{S}_g$ and every
$z\in\mathbb{C}^g$, one has
\begin{eqnarray}
\label{(1)}
\theta_{iY^{-1}}(-iY^{-1}z)=\sqrt{\det Y}\exp(\pi ^{\rm t}\!zY^{-1}z)\theta_{iY}(z) .
\end{eqnarray}

\begin{lemma}
\label{lemma8.2} Let $Z\in\mathbb{H}_g$ and $z\in\mathbb{C}^g$. Putting
$Y={\rm Im}Z$, one has the inequality
$$\|\theta_Z(z)\|\le\|\theta_{iY}(0)\|=\theta_{iY}(0)\sqrt[4]{\det Y}.$$
\end{lemma}

\begin{proof}
Put $y={\rm Im}z$. One has
$$|\theta_Z(z)|=\Bigl|\sum_{m\in\mathbb{Z}^g}\exp(i\pi ^{\rm t}\!mZm+2i\pi ^{\rm t}\!mz)\Bigr|\le\sum_{m\in\mathbb{Z}^g}\Bigl|\exp(i\pi ^{\rm t}\!mZm+2i\pi ^{\rm t}\!mz)\Bigr|=\theta_{iY}(iy)\quad,$$
that is, $\|\theta_Z(z)\|\le\|\theta_{iY}(iy)\|$. The functional equation~(\ref{(1)}) gives
$\|\theta_{iY^{-1}}(Y^{-1}y)\|=\|\theta_{iY}(iy)\|$, and one deduces
\begin{eqnarray}
\label{(2)}
\|\theta_Z(z)\|\le\|\theta_{iY^{-1}}(Y^{-1}y)\| .
\end{eqnarray}

Applying again~(\ref{(2)}) with $Z$ replaced by $iY^{-1}$ and $z$ by $Y^{-1}y$, one
gets 
$$
\|\theta_{iY^{-1}}(Y^{-1}y)\|\le\|\theta_{iY}(0)\| .
$$ 
Whence the result. $\Box$
\end{proof}

\medskip

Let $Y\in\mathbb{S}_g$. Define
$\displaystyle\lambda(Y)=\min_{m\in\mathbb{Z}^g-\{0\}}{}^{\rm t}\!mYm$. For every
$t\in\mathbb{R}^*_+$, put
$$f_Y(t)=\theta_{itY}(0)=\sum_{m\in\mathbb{Z}^g}\exp(-\pi t ^{\rm t}\!mYm) .$$

\begin{lemma} 
\label{lemma8.3} 
Let $Y\in\mathbb{S}_g$ and put $\lambda=\lambda(Y)$. The
following properties hold.
\begin{enumerate}

\item The function $\mathbb{R}^*_+\rightarrow\mathbb{R}$ that maps $t$ to
$t^{g/2}f_Y(t)$ is increasing. 
\item One has the estimate
$\displaystyle f_Y\Bigl(\frac{g+2}{2\pi\lambda}\Bigr)\le\frac{g+2}{2}$.
\end{enumerate}
\end{lemma}

\begin{proof} $(a)$ The functional equation~(\ref{(1)}) implies
$\sqrt{\det Y}t^{g/2}f_Y(t)=f_{Y^{-1}}(1/t)$ for every $t\in\mathbb{R}^*_+$;
conclude by remarking that $f_{Y^{-1}}$ is decreasing.

\medskip

$(b)$ Part $(a)$ gives
$\displaystyle\frac{\rm d}{{\rm d}t}[t^{g/2}f_Y(t)]\ge0$, that is,
$\displaystyle\frac{g}{2t}f_Y(t)\ge-f_Y'(t)$ for every $t>0$. On the other hand,
$$-\frac{1}{\pi}f_Y'(t)=\sum_{m\in\mathbb{Z}^g}{}^{\rm t}\!mYm\exp(-\pi t ^{\rm t}\!mYm)\ge\sum_{m\in\mathbb{Z}^g-\{0\}}\lambda\exp(-\pi t ^{\rm t}\!mYm)=\lambda[f_Y(t)-1] .$$

One infers $\displaystyle\frac{g}{2t}f_Y(t)\ge\pi\lambda[f_Y(t)-1]$. Choosing
$\displaystyle t=\frac{g+2}{2\pi\lambda}$, one obtains the result. $\Box$
\end{proof}

\begin{propo} 
\label{proposition8.4} 
Let $Y\in\mathbb{S}_g$. Putting
$\lambda=\lambda(Y)$, one has the upper bound
$$\theta_{iY}(0)\le\frac{g+2}{2}\max\Big[\Bigl(\frac{g+2}{2\pi\lambda}\Bigr)^{g/2},1\Bigr].
$$
\end{propo}

\begin{proof} Put $\displaystyle t=\frac{g+2}{2\pi\lambda}$. If $t\ge1$, then
Lemma~\ref{lemma8.3} $(a)$ implies the inequality $f_Y(1)\le t^{g/2}f_Y(t)$. If $t\le1$,
then $f_Y(1)\le f_Y(t)$ since $f_Y$ is decreasing. In any case, one obtains
$$
\theta_{iY}(0)=f_Y(1)\le\max(t^{g/2},1)f_Y(t) .
$$
Conclude by applying 
Lemma~\ref{lemma8.3} $(b)$. $\Box$
\end{proof}

\medskip

Now, to prove Proposition~\ref{proposition8.1} from Lemma~\ref{lemma8.2} and 
Proposition~\ref{proposition8.4}, it suffices
to observe that if $Z\in\mathbb{H}_g$ is Siegel-reduced, then
$\displaystyle\lambda({\rm Im}Z)\ge\frac{\sqrt{3}}{2}$ (see lemma 15 of
\cite{Igu} page 195).

\bigskip

\medskip
  
\noindent {\bf Acknowledgments} 
The main body of this work (by P.P) benefited from hours of discussions with the author of the Appendix
(P.A.), who shared with great generosity his expertise in Arakelov geometry, provided extremely valuable
advices, references, explanations, critics, insights, and even read large parts of preliminary releases of the present 
paper\footnote{Although, as goes without saying, he bears no responsibility for the mistakes which remain.}. 
Pascal actually ended writing the present Appendix, and the
bounds its displays for theta functions should definitely be useful in a much wider context than the
present work\footnote{They have already been used by R.~Wilms in~\cite{Wi16}, see the introduction to Autissier's Appendix.}.  

  Many thanks are also due to Qing Liu for clarifying some points of algebraic geometry, Fabien Pazuki for explaining general diophantine geometry issues, and to
Ga\"el R\'emond for describing us his own approach to Vojta's method, which under some guise plays a crucial role  here. 

   As already stressed, the influence of the orange book~\cite{CE11} should be obvious all over this text. We have used many results of the deep effective 
Arakelov study of modular curves led there by Bas Edixhoven, Jean-Marc Couveignes and their coauthors. We also benefited from a visit to Leiden 
University in June of 2015, where we had very enlightening discussions with Bas, Peter Bruin, Robin de Jong and David Holmes.    

   Olga Balkanova, Samuel Le Fourn and Guillaume Ricotta helped a lot with references and explanations about some results of analytic number theory, and
Jean-Beno\^{\i}t Bost kindly answered some questions about his own arithmetic B\'ezout theorem.

  Finally, many thanks are due to the referee for her or his substantial and helpful work.

{\footnotesize

}

\bigskip

\bigskip

{\small Pascal Autissier

\medskip

I.M.B., Universit\'e de Bordeaux, 351, cours de la
Lib\'eration, 33405 Talence cedex, France.

\medskip

pascal.autissier@math.u-bordeaux.fr}

\bigskip

{\small Pierre Parent

\medskip

I.M.B., Universit\'e de Bordeaux, 351, cours de la
Lib\'eration, 33405 Talence cedex, France.

\medskip

pierre.parent@math.u-bordeaux.fr}

\end{document}